\documentclass[a4paper,twoside,10pt]{amsart}
\usepackage{amsmath,amsfonts,amssymb,amsthm}
\newtheorem{theorem}{Theorem}[section]
\newtheorem{lemma}{Lemma}[section]
\newtheorem{proposition}{Proposition}[section]

\usepackage{mathrsfs}
\usepackage{color,graphicx}
\usepackage[a4paper]{geometry}
\numberwithin{equation}{section}
\hyphenation{Hankel}
\hyphenation{Jacobi}

\author[G. Nemes]{Gerg\H{o} Nemes}
\address{School of Mathematics, The University of Edinburgh, James Clerk Maxwell Building, The King's Buildings, Peter Guthrie Tait Road, Edinburgh EH9 3FD, UK}
\email{Gergo.Nemes@ed.ac.uk}

\keywords{asymptotic expansions, error bounds, remainder terms, Bessel functions, Hankel functions}
\subjclass[2010]{41A60, 30E15, 33C10}

\begin{document}

\title[The large-argument asymptotics of the Hankel and Bessel functions]{Error bounds for the large-argument asymptotic expansions of the Hankel and Bessel functions}

\begin{abstract} In this paper, we reconsider the large-argument asymptotic expansions of the Hankel, Bessel and modified Bessel functions and their derivatives. New integral representations for the remainder terms of these asymptotic expansions are found and used to obtain sharp and realistic error bounds. We also give re-expansions for these remainder terms and provide their error estimates. A detailed discussion on the sharpness of our error bounds and their relation to other results in the literature is given. The techniques used in this paper should also generalize to asymptotic expansions which arise from an application of the method of steepest descents.
\end{abstract}
\maketitle

\section{Introduction and main results}

The large-$z$ asymptotic expansions of the Hankel functions $H_\nu^{\left(1\right)}\left(z\right)$ and $H_\nu^{\left(2\right)}\left(z\right)$, the Bessel functions $J_\nu\left(z\right)$ and $Y_\nu\left(z\right)$, and the modified Bessel functions $K_\nu\left(z\right)$ and $I_\nu\left(z\right)$ have a long and rich history. The earliest known attempt to obtain an asymptotic expansion is that of Poisson \cite{Poisson} in 1823 where the special case of $J_0\left(x\right)$ with $x$ being positive was considered. He derived the first few terms of an asymptotic expansion based on the differential equation satisfied by $J_0\left(x\right)$, but investigated neither the coefficients nor the remainder term. The first complete asymptotic expansion for $J_0\left(x\right)$, with a formula for the general term, was given by Hamilton \cite{Hamilton} in 1843. A similar (formal) analysis of $J_1\left(x\right)$ is due to Hansen \cite[pp. 119--123]{Hansen} from 1843. The complete asymptotic expansion of $J_n\left(x\right)$ for arbitrary integer order was first given by Jacobi \cite{Jacobi} in 1849. A rigorous treatment of Poisson's expansion was provided by Lipschitz \cite{Lipschitz} in 1859 with the aid of contour integration; here Jacobi's result was also studied briefly. The general asymptotic expansions of $J_\nu\left(z\right)$ and $Y_\nu\left(z\right)$, with a fixed complex $\nu$ and large complex $z$, were obtained (rigorously) by Hankel \cite{Hankel} in his memoir written in 1868. He also gave the corresponding expansions of the Hankel functions $H_\nu^{\left(1\right)}\left(z\right)$ and $H_\nu^{\left(2\right)}\left(z\right)$. The asymptotic expansion of the modified Bessel function $K_\nu\left(z\right)$ was established by Kummer \cite{Kummer} in 1837; this result was reproduced, with the addition of the corresponding formula for $I_\nu\left(z\right)$, by Kirchhoff \cite{Kirchhoff} in 1854. Estimates for the remainder terms of the asymptotic expansions of the Hankel and Bessel functions have been obtained by several authors in the past one and half century. We mention the works of Schl\"{a}fli \cite{Schlafli} from 1875, Watson \cite[pp. 209--210]{Watson} from 1922 and Meijer \cite{Meijer} from 1932, where the methods used were based on integral representations. A different approach to the problem, using differential equation techniques, was taken by Weber \cite{Weber} in 1890 and much later, in 1964, by Olver \cite{Olver1}. For a more detailed historical account, the reader is referred to Watson's monumental treatise on the theory of Bessel functions \cite[pp. 194--196]{Watson}.

In this paper, we reconsider the large-argument asymptotic expansions of the Hankel, Bessel and modified Bessel functions, and the corresponding results for their derivatives. In modern notation, the expansions for the Hankel functions, Bessel functions, and that of their derivatives may be written
\begin{equation}\label{eq94}
H_\nu ^{\left( 1 \right)} \left( z \right) \sim \left( {\frac{2}{\pi z}} \right)^{\frac{1}{2}} e^{i\omega } \sum\limits_{n = 0}^{\infty} {i^n \frac{{a_n \left( \nu  \right)}}{z^n}}, 
\end{equation}
\begin{equation}\label{eq060}
H_\nu ^{\left( 1 \right)\prime} \left( z \right) \sim i\left( {\frac{2}{\pi z}} \right)^{\frac{1}{2}} e^{i\omega } \sum\limits_{n = 0}^{\infty} {i^n \frac{{b_n \left( \nu  \right)}}{z^n}}, 
\end{equation}
as $z\to \infty$ in the sector $- \pi + \delta  \le \arg z \le 2\pi - \delta$;
\begin{equation}\label{eq95}
H_\nu ^{\left( 2 \right)} \left( z \right) \sim \left( {\frac{2}{{\pi z}}} \right)^{\frac{1}{2}} e^{ - i\omega } \sum\limits_{n = 0}^{\infty} {\left( { - i} \right)^n \frac{{a_n \left( \nu  \right)}}{{z^n }}},
\end{equation}
\begin{equation}\label{eq061}
H_\nu ^{\left( 2 \right)\prime} \left( z \right) \sim -i\left( {\frac{2}{{\pi z}}} \right)^{\frac{1}{2}} e^{ - i\omega } \sum\limits_{n = 0}^{\infty} {\left( { - i} \right)^n \frac{{b_n \left( \nu  \right)}}{{z^n }}},
\end{equation}
as $z\to \infty$ in the sector $- 2\pi + \delta  \le \arg z \le \pi - \delta$;
\begin{equation}\label{eq86}
J_\nu  \left( z \right)\sim \left( {\frac{2}{{\pi z}}} \right)^{\frac{1}{2}} \left( {\cos \omega \sum\limits_{n = 0}^{\infty} {\left( { - 1} \right)^n \frac{{a_{2n} \left( \nu  \right)}}{{z^{2n} }}} - \sin \omega \sum\limits_{m = 0}^{\infty} {\left( { - 1} \right)^m \frac{{a_{2m + 1} \left( \nu  \right)}}{{z^{2m + 1} }}}  } \right),
\end{equation}
\begin{equation}\label{eq888}
J'_\nu  \left( z \right)\sim -\left( {\frac{2}{{\pi z}}} \right)^{\frac{1}{2}} \left( {\sin \omega \sum\limits_{n = 0}^{\infty} {\left( { - 1} \right)^n \frac{{b_{2n} \left( \nu  \right)}}{{z^{2n} }}} +\cos \omega \sum\limits_{m = 0}^{\infty} {\left( { - 1} \right)^m \frac{{b_{2m + 1} \left( \nu  \right)}}{{z^{2m + 1} }}}  } \right),
\end{equation}
\begin{equation}\label{eq80}
Y_\nu  \left( z \right) \sim \left( {\frac{2}{{\pi z}}} \right)^{\frac{1}{2}} \left( {\sin \omega \sum\limits_{n = 0}^{\infty} {\left( { - 1} \right)^n \frac{{a_{2n} \left( \nu  \right)}}{z^{2n}}} + \cos \omega \sum\limits_{m = 0}^{\infty} {\left( { - 1} \right)^m \frac{{a_{2m + 1} \left( \nu  \right)}}{{z^{2m + 1} }}}  } \right)
\end{equation}
and
\begin{equation}\label{eq108}
Y'_\nu  \left( z \right) \sim \left( {\frac{2}{{\pi z}}} \right)^{\frac{1}{2}} \left( {\cos \omega \sum\limits_{n = 0}^{\infty} {\left( { - 1} \right)^n \frac{{b_{2n} \left( \nu  \right)}}{z^{2n}}} -\sin \omega \sum\limits_{m = 0}^{\infty} {\left( { - 1} \right)^m \frac{{b_{2m + 1} \left( \nu  \right)}}{{z^{2m + 1} }}}  } \right),
\end{equation}
as $z\to \infty$ in the sector $\left| {\arg z} \right| \le \pi - \delta$, with $\delta$ being an arbitrary small (fixed) positive constant and $\omega  = z - \frac{\pi }{2}\nu  - \frac{\pi }{4}$ (see, e.g., \cite[\S 10.17(i) and \S 10.17(ii)]{NIST}). The branch of the square root in these expansions is determined by $z^{\frac{1}{2}}  = \exp \left( {\frac{1}{2}\log \left| z \right| + \frac{i}{2}\arg z} \right)$. The coefficients $a_n \left( \nu  \right)$ and $b_n \left( \nu  \right)$ are polynomials in $\nu^2$ of degree $n$; their explicit forms are as follows:
\begin{gather}\label{eq612}
\begin{split}
a_n \left( \nu  \right) & = \left( { - 1} \right)^n \frac{{\cos \left( {\pi \nu } \right)}}{\pi }\frac{{\Gamma \left( {n + \frac{1}{2} + \nu } \right)\Gamma \left( {n + \frac{1}{2} - \nu } \right)}}{{2^n \Gamma \left( {n + 1} \right)}} \\ & = \frac{(4\nu ^2  - 1^2 )( 4\nu ^2  - 3^2 ) \cdots ( 4\nu ^2  - \left( {2n - 1} \right)^2  )}{8^n \Gamma \left( {n + 1} \right)},
\end{split}
\end{gather}
for $n\geq 0$ and
\begin{align*}
b_0\left( \nu  \right)=1,\;\, b_n \left( \nu  \right) & = \left( { - 1} \right)^{n + 1} \frac{{\cos \left( {\pi \nu } \right)}}{\pi }\frac{{\Gamma \left( {n - \frac{1}{2} + \nu } \right)\Gamma \left( {n - \frac{1}{2} - \nu } \right)(4\nu ^2  + 4n^2  - 1)}}{{2^{n + 2} \Gamma \left( {n + 1} \right)}} \\ & = \frac{{( (4\nu ^2  - 1^2 )(4\nu ^2  - 3^2 ) \cdots (4\nu ^2  - \left( {2n - 3} \right)^2 ) )(4\nu ^2  + 4n^2  - 1)}}{{8^n \Gamma \left( {n + 1} \right)}},
\end{align*}
for $n\geq 1$ (with the empty products interpreted as $1$). The expansions \eqref{eq94}, \eqref{eq95}, \eqref{eq86} and \eqref{eq80} are due to Hankel. The results established by Poisson, Hansen and Jacobi are all special cases of the asymptotic expansion \eqref{eq86}. If $2\nu$ equals an odd integer, then the right-hand sides of \eqref{eq94}--\eqref{eq108} terminate and represent the corresponding function exactly.

The analogous expansions for the modified Bessel functions and their derivatives are
\begin{equation}\label{eq88}
K_\nu  \left( z \right) \sim \left( {\frac{\pi }{{2z}}} \right)^{\frac{1}{2}} e^{ - z} \sum\limits_{n = 0}^\infty  {\frac{{a_n \left( \nu  \right)}}{{z^n }}} ,
\end{equation}
\begin{equation}\label{eq222}
K'_\nu  \left( z \right) \sim -\left( {\frac{\pi }{{2z}}} \right)^{\frac{1}{2}} e^{ - z} \sum\limits_{n = 0}^\infty  {\frac{{b_n \left( \nu  \right)}}{{z^n }}} ,
\end{equation}
as $z\to \infty$ in the sector $\left| {\arg z} \right| \le \frac{3\pi}{2} - \delta$; and
\begin{equation}\label{eq87}
I_\nu  \left( z \right) \sim \frac{{e^z }}{{\left( {2\pi z} \right)^{\frac{1}{2}} }}\sum\limits_{n = 0}^\infty  {\left( { - 1} \right)^n \frac{{a_n \left( \nu  \right)}}{{z^n }}}  \pm i e^{ \pm \pi i\nu } \frac{{e^{ - z} }}{{\left( {2\pi z} \right)^{\frac{1}{2}} }}\sum\limits_{m = 0}^\infty  {\frac{{a_m \left( \nu  \right)}}{z^m }} ,
\end{equation}
\begin{equation}\label{eq223}
I'_\nu  \left( z \right) \sim \frac{{e^z }}{{\left( {2\pi z} \right)^{\frac{1}{2}} }}\sum\limits_{n = 0}^\infty  {\left( { - 1} \right)^n \frac{{b_n \left( \nu  \right)}}{{z^n }}}  \mp i e^{ \pm \pi i\nu } \frac{{e^{ - z} }}{{\left( {2\pi z} \right)^{\frac{1}{2}} }}\sum\limits_{m = 0}^\infty  {\frac{{b_m \left( \nu  \right)}}{z^m }} ,
\end{equation}
as $z\to \infty$ in the sectors $-\frac{\pi}{2}+\delta \leq \pm \arg z \leq \frac{3\pi}{2} - \delta$, with $\delta$ being an arbitrary small (fixed) positive constant (see, for instance, \cite[\S 10.40(i)]{NIST}). As before, the square roots are determined by $z^{\frac{1}{2}}  = \exp \left( {\frac{1}{2}\log \left| z \right| + \frac{i}{2}\arg z} \right)$. The original result of Kirchhoff omits the second component of the asymptotic expansion \eqref{eq87}, which is permitted if we restrict $z$ to the smaller sector $\left| {\arg z} \right| \le \frac{\pi}{2} - \delta$. (In this sector, the second component is exponentially small compared to any of the terms in the first component for large $z$ and is therefore negligible.) The same observation applies for \eqref{eq223}. The expansions \eqref{eq88}--\eqref{eq223} terminate and are exact when $2\nu$ equals an odd integer.

It is important to note that these asymptotic expansions are not uniform with respect to $\nu$; we have to require $\nu^2  = o(\left| z \right|)$ in order to satisfy Poincar\'e's definition. There exist other types of large-$z$ expansions which are valid under the weaker condition $\nu  = o(\left| z \right|)$ (see, for instance, \cite{Bickley}, \cite{Goldstein} or \cite{Heitman}); however these expansions do not lend themselves to treatment with our methods.

The main aim of the present paper is to derive new error bounds for the asymptotic expansions \eqref{eq94}--\eqref{eq223} and their re-expanded versions. Due to the various relations between the Hankel and Bessel functions, it is enough to study the remainder terms of the asymptotic expansions \eqref{eq86}, \eqref{eq888}, \eqref{eq88} and \eqref{eq222} (see Appendix \ref{appendixa} for more details). Thus, for any non-negative integers $N$ and $M$, we write
\begin{equation}\label{eq70}
K_\nu  \left( z \right) = \left( {\frac{\pi }{{2z}}} \right)^{\frac{1}{2}} e^{ - z} \left( {\sum\limits_{n = 0}^{N - 1} {\frac{{a_n \left( \nu  \right)}}{{z^n }}}  + R_N^{\left( K \right)} \left( {z,\nu } \right)} \right),
\end{equation}
\begin{equation}\label{eq843}
K'_\nu  \left( z \right) = -\left( {\frac{\pi }{{2z}}} \right)^{\frac{1}{2}} e^{ - z} \left( {\sum\limits_{n = 0}^{N - 1} {\frac{{b_n \left( \nu  \right)}}{{z^n }}}  + R_N^{( K')} \left( {z,\nu } \right)} \right),
\end{equation}
\begin{multline*}
J_\nu  \left( z \right) = \left( {\frac{2}{{\pi z}}} \right)^{\frac{1}{2}} \left( \cos \omega \left( {\sum\limits_{n = 0}^{N - 1} {\left( { - 1} \right)^n \frac{{a_{2n} \left( \nu  \right)}}{{z^{2n} }}}  + R_{2N}^{\left( J \right)} \left( {z,\nu } \right)} \right) \right. \\ \left.- \sin \omega \left( {\sum\limits_{m = 0}^{M - 1} {\left( { - 1} \right)^m \frac{{a_{2m + 1} \left( \nu  \right)}}{{z^{2m + 1} }}}  - R_{2M + 1}^{\left( J \right)} \left( {z,\nu } \right)} \right) \right)
\end{multline*}
and
\begin{multline*}
J'_\nu  \left( z \right) = -\left( {\frac{2}{{\pi z}}} \right)^{\frac{1}{2}} \left( \sin \omega \left( {\sum\limits_{n = 0}^{N - 1} {\left( { - 1} \right)^n \frac{{b_{2n} \left( \nu  \right)}}{{z^{2n} }}}  + R_{2N}^{(J')} \left( {z,\nu } \right)} \right) \right. \\ \left.+ \cos \omega \left( {\sum\limits_{m = 0}^{M - 1} {\left( { - 1} \right)^m \frac{{b_{2m + 1} \left( \nu  \right)}}{{z^{2m + 1} }}}  - R_{2M + 1}^{(J')} \left( {z,\nu } \right)} \right) \right).
\end{multline*}
Throughout this paper, if not stated otherwise, empty sums are taken to be zero. The derivations of the estimates for $R_N^{\left( K \right)} \left( {z,\nu } \right)$, $R_N^{(K')} \left( {z,\nu } \right)$, $R_{2N}^{\left( J \right)} \left( {z,\nu } \right)$, $R_{2M+1}^{\left( J \right)} \left( {z,\nu } \right)$, $R_{2N}^{(J')} \left( {z,\nu } \right)$ and $R_{2M+1}^{(J')} \left( {z,\nu } \right)$ are based on new integral representations of these remainder terms. In expressing these new integral representations, we shall employ the following notation ($w \neq 0$):
\[
\Lambda _p \left( w \right) = w^p e^w \Gamma \left( { 1- p,w} \right) \; \text{ and } \; \Pi _p \left( w \right) = \frac{1}{2}\left( {\Lambda _p \left( {we^{\frac{\pi }{2}i} } \right) + \Lambda _p \left( {we^{ - \frac{\pi }{2}i} } \right)} \right),
\]
where $\Gamma \left( { 1- p,w} \right)$ is the incomplete gamma function. The functions $\Lambda _p \left( w \right)$ and $\Pi _p \left( w \right)$ were originally introduced by Dingle \cite{Dingle1}\cite[pp. 406--407 and p. 415]{Dingle2} and, following his convention, we refer to them as basic terminants (but note that Dingle's notation slightly differs from ours, e.g., $\Lambda_{p-1} \left( w \right)$ is used for our $\Lambda_p \left( w \right)$). These basic terminants are multivalued functions of their argument $w$ and, when the argument is fixed, are entire functions of their order $p$. We shall also use the concept of the regularized hypergeometric function ${\bf F}\left( {a,b;c;w} \right)$ which is defined by the power series expansion
\[
{\bf F}\left( {a,b;c;w} \right) = \frac{1}{{\Gamma \left( a \right)\Gamma \left( b \right)}}\sum\limits_{n = 0}^\infty  {\frac{{\Gamma \left( {a + n} \right)\Gamma \left( {b + n} \right)}}{{\Gamma \left( {c + n} \right)\Gamma \left( {n + 1} \right)}}w^n } 
\]
for $\left| w \right| < 1$ and by analytic continuation elsewhere \cite[\S 15.2]{NIST}. The parameters $a$, $b$ and $c$ of this function can take arbitrary complex values.

We are now in a position to formulate our main results. In Theorem \ref{theorem1}, we give new integral representations for the remainder terms $R_N^{\left( K \right)} \left( {z,\nu } \right)$ and $R_N^{\left( J \right)} \left( {z,\nu } \right)$ involving a free complex parameter $\lambda$, $\Re \left( \lambda  \right) > 0$.

\begin{theorem}\label{theorem1} Let $N$ be a non-negative integer and let $\nu$ and $\lambda$ be arbitrary complex numbers such that $\left| {\Re \left( \nu  \right)} \right| < N + \frac{1}{2}$ and $\Re \left( \lambda  \right) > 0$. Then
\begin{gather}\label{eq9}
\begin{split}
R_N^{\left( K \right)} \left( {z,\nu } \right) = \; & \left( { - 1} \right)^N \frac{{\cos \left( {\pi \nu } \right)}}{\pi }\frac{{\Gamma \left( {N + \lambda } \right)}}{{2^N}}\frac{1}{{z^N }} \\ & \times \int_0^{ + \infty } {\frac{{t^{\lambda  - 1} }}{{\left( {1 + t} \right)^{N + \lambda } }}{\bf F} \left( {\nu  + \frac{1}{2}, - \nu  + \frac{1}{2};\lambda ; - t} \right)\Lambda _{N + \lambda} \left( {2z\left( {1 + t} \right)} \right)dt} ,
\end{split}
\end{gather}
provided $\left| {\arg z} \right| < \frac{{3\pi }}{2}$, and
\begin{gather}\label{eq54}
\begin{split}
R_N^{\left( J \right)} \left( {z,\nu } \right) = \; & \left( { - 1} \right)^{\left\lfloor {N/2} \right\rfloor }  \frac{{\cos \left( {\pi \nu } \right)}}{\pi }\frac{{\Gamma \left( {N + \lambda } \right)}}{{2^N }}\frac{1}{{z^N }} \\ & \times \int_0^{ + \infty } {\frac{{t^{\lambda  - 1} }}{{\left( {1 + t} \right)^{N + \lambda } }}{\bf F}\left( {\nu  + \frac{1}{2}, - \nu  + \frac{1}{2};\lambda ; - t} \right)\Pi _{N + \lambda} \left( {2z\left( {1 + t} \right)} \right)dt} ,
\end{split}
\end{gather}
provided $\left| {\arg z} \right| < \pi$.
\end{theorem}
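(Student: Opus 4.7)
The plan is to establish (1.19) first and then deduce (1.20) from it via the standard rotation identity linking $J_\nu$ to $K_\nu$. For (1.19), the strategy is to reduce its right-hand side, through Mellin--Barnes manipulation, to a classical Mellin--Barnes representation of $R_N^{(K)}(z,\nu)$ that can be derived independently from the Schl\"afli--Sonine integral for $K_\nu$.

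Concretely, I would insert into the right-hand side of (1.19) the Barnes representation
$$\mathbf{F}(\nu+\tfrac12,-\nu+\tfrac12;\lambda;-t)=\frac{\cos(\pi\nu)}{\pi}\cdot\frac{1}{2\pi i}\int_{\mathcal{C}_1}\frac{\Gamma(\nu+\tfrac12+\sigma_1)\Gamma(-\nu+\tfrac12+\sigma_1)\Gamma(-\sigma_1)}{\Gamma(\lambda+\sigma_1)}\,t^{\sigma_1}\,d\sigma_1$$
of the regularized hypergeometric function and the Mellin inversion
$$\Lambda_{N+\lambda}(2z(1+t))=\frac{1}{\Gamma(N+\lambda)}\cdot\frac{1}{2\pi i}\int_{\mathcal{C}_2}\Gamma(\sigma_2+1)\Gamma(-\sigma_2)\Gamma(\sigma_2+N+\lambda)\,(2z(1+t))^{-\sigma_2}\,d\sigma_2$$
of the basic terminant, the latter following easily from the integral representation $\Lambda_p(w)=w\int_0^\infty e^{-w\tau}(1+\tau)^{-p}d\tau$. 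A Fubini interchange, justified on the intersection of the strips $\Re\lambda>0$, $|\Re\nu|<N+\tfrac12$, $|\arg z|<\tfrac{3\pi}{2}$, reduces the $t$-integral to a beta function $B(\lambda+\sigma_1,N+\sigma_2-\sigma_1)$, whose $\Gamma(\lambda+\sigma_1)$ factor exactly cancels the corresponding denominator from the Barnes representation of $\mathbf{F}$. The $\sigma_1$-integral can then be evaluated in closed form by Barnes' first lemma, and after the shift $\sigma=\sigma_2+N$ and an application of the reflection identity $\Gamma(\sigma-N+1)\Gamma(N-\sigma)=(-1)^{N+1}\Gamma(\sigma)\Gamma(1-\sigma)$, the entire expression collapses to
$$\frac{\cos(\pi\nu)}{\pi}\cdot\frac{1}{2\pi i}\int_{\mathcal{C}_N}\Gamma(\sigma+\tfrac12+\nu)\Gamma(\sigma+\tfrac12-\nu)\Gamma(-\sigma)\,(2z)^{-\sigma}\,d\sigma,$$
which is the classical Mellin--Barnes representation of $R_N^{(K)}(z,\nu)$, obtained independently by applying the Barnes representation of the binomial factor $(1+s/(2z))^{\nu-1/2}$ to the Schl\"afli--Sonine integral $K_\nu(z)=\sqrt{\pi/(2z)}\,e^{-z}\Gamma(\nu+\tfrac12)^{-1}\int_0^\infty e^{-s}s^{\nu-1/2}(1+s/(2z))^{\nu-1/2}ds$ and displacing the contour past the first $N$ right-poles (which produce the asymptotic partial sum $\sum_{n=0}^{N-1}a_n(\nu)z^{-n}$). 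Analytic continuation in $\nu$ and $\lambda$ then extends the resulting identity to the full stated parameter range.

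For (1.20), I would invoke the rotation formula $J_\nu(z)=(\pi i)^{-1}(e^{-i\pi\nu/2}K_\nu(ze^{-i\pi/2})-e^{i\pi\nu/2}K_\nu(ze^{i\pi/2}))$ and substitute (1.19) at each of the two rotated arguments. The rotation $z\mapsto ze^{\mp i\pi/2}$ sends $z^{-N}$ to $(\pm i)^Nz^{-N}$ and the basic terminant to $\Lambda_{N+\lambda}(2z(1+t)e^{\mp i\pi/2})$; after folding in the phases $e^{\mp i\pi\nu/2}/(\pi i)$ and matching against the series parts of (1.23), the two terminants average, by the very definition of $\Pi_p$, into $\Pi_{N+\lambda}(2z(1+t))$, while the trigonometric bookkeeping turns the overall sign $(-1)^N$ into $(-1)^{\lfloor N/2\rfloor}$. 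The narrower angular domain $|\arg z|<\pi$ is dictated by the requirement that both rotated arguments lie in the validity sector $|\arg(\cdot)|<\tfrac{3\pi}{2}$ of (1.19).

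The principal obstacle is the contour-tracking inside the Mellin--Barnes step for (1.19): one must verify that all the interchanges of integration orders are simultaneously justified on a common parameter strip, and that after the shift $\sigma\leftarrow\sigma_2+N$ the contour becomes precisely the $\mathcal{C}_N$ that separates the right-poles $\sigma\in\{N,N+1,\dots\}$ from the left-poles of $\Gamma(\sigma+\tfrac12+\nu)\Gamma(\sigma+\tfrac12-\nu)$, so that the resulting Barnes integral really matches $R_N^{(K)}(z,\nu)$ rather than a different analytic function sharing its formal asymptotic expansion.
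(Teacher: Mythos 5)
Your derivation of \eqref{eq54} from \eqref{eq9} coincides with the paper's: the connection formula $\pi i J_\nu(z)=e^{-\frac{\pi}{2}i\nu}K_\nu(ze^{-\frac{\pi}{2}i})-e^{\frac{\pi}{2}i\nu}K_\nu(ze^{\frac{\pi}{2}i})$ yields $2R_{2N}^{(J)}(z,\nu)=R_{2N}^{(K)}(ze^{\frac{\pi}{2}i},\nu)+R_{2N}^{(K)}(ze^{-\frac{\pi}{2}i},\nu)$ and $2iR_{2M+1}^{(J)}(z,\nu)=R_{2M+1}^{(K)}(ze^{\frac{\pi}{2}i},\nu)-R_{2M+1}^{(K)}(ze^{-\frac{\pi}{2}i},\nu)$ (equations \eqref{eq56}--\eqref{eq57}), and the two rotated terminants recombine into $\Pi_{N+\lambda}$ with the sign $(-1)^{\lfloor N/2\rfloor}$ exactly as you describe. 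For \eqref{eq9}, however, you take a genuinely different route. The paper starts from Boyd's representation \eqref{eq1} of $R_N^{(K)}$ as a Stieltjes-type integral of $K_\nu(t)$ itself, inserts the Laplace-transform formula $K_\nu(u)=\pi^{\frac12}e^{-u}(2u)^{\lambda-\frac12}\int_0^{+\infty}e^{-2ut}t^{\lambda-1}\mathbf{F}(\nu+\frac12,-\nu+\frac12;\lambda;-t)\,dt$, interchanges the two real integrals (justified by the estimates of Lemma \ref{lemma1}), evaluates the inner $u$-integral in closed form as $\Lambda_{N+\lambda}$ via \eqref{eq355}, and only then extends from $|\arg z|<\pi$ to $|\arg z|<\frac{3\pi}{2}$ by analytic continuation using the boundedness of $\Lambda_p$. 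Your Mellin--Barnes computation works backwards from the right-hand side of \eqref{eq9} and is correct as sketched: the Barnes kernels, the beta-integral cancellation of $\Gamma(\lambda+\sigma_1)$, Barnes' first lemma, and the reflection step all check out, and the residues of $\Gamma(-\sigma)$ at $\sigma=n$ reproduce $a_n(\nu)z^{-n}$ by \eqref{eq612}. What it buys you is the full sector $|\arg z|<\frac{3\pi}{2}$ in a single pass, since the Barnes representation of $\Lambda_{N+\lambda}$ already lives there; what it costs is a much heavier burden of justification -- a triple interchange of two contour integrals with one real one, pole separation in Barnes' lemma (for $|\Re(\nu)|\ge\frac12$ the $\sigma_1$-contour must be indented, so one should first take $|\Re(\nu)|<\frac12$ and continue analytically in $\nu$, as you note), and an independent derivation of the Mellin--Barnes form of $R_N^{(K)}$ from the Schl\"afli--Sonine integral, which is essentially a second proof of Boyd's formula \eqref{eq1} in disguise. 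You correctly single out the one step that cannot be waved away, namely that the final contour is the one separating $\{N,N+1,\dots\}$ from the left poles, so that the Barnes integral is $R_N^{(K)}$ and not merely some function sharing its formal expansion. The paper's route avoids all of this contour bookkeeping and, as a bonus, sets up directly the $\lambda\to0+$ limit needed for Theorem \ref{theorem2} and the positivity arguments of Section \ref{section3}.
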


If $\lambda=0$ and $N\geq 1$, a slight modification of \eqref{eq9} and \eqref{eq54} holds.

\begin{theorem}\label{theorem2} Let $N$ be a positive integer and let $\nu$ be an arbitrary complex number such that $\left| {\Re \left( \nu  \right)} \right| < N + \frac{1}{2}$. Then
\begin{gather}\label{eq45}
\begin{split}
R_N^{\left( K \right)} \left( {z,\nu } \right) = \; & \left( { - 1} \right)^N \frac{{\cos \left( {\pi \nu } \right)}}{\pi }\frac{{\Gamma \left( N \right)}}{{2^N }}\frac{1}{{z^N }} \\ & \times  \left( {\Lambda _{N } \left( {2z} \right) + \int_0^{ + \infty } {\frac{{t^{ - 1} }}{{\left( {1 + t} \right)^N }}{\bf F}\left( {\nu  + \frac{1}{2}, - \nu  + \frac{1}{2};0; - t} \right)\Lambda _{N} \left( {2z\left( {1 + t} \right)} \right)dt} } \right),
\end{split}
\end{gather}
provided $\left| {\arg z} \right| < \frac{{3\pi }}{2}$, and
\begin{gather}\label{eq55}
\begin{split}
R_N^{\left( J \right)} \left( {z,\nu } \right) = \; & \left( { - 1} \right)^{\left\lfloor {N/2} \right\rfloor } \frac{{\cos \left( {\pi \nu } \right)}}{\pi }\frac{{\Gamma \left( N \right)}}{{2^N }}\frac{1}{{z^N }}  \\ & \times  \left( {\Pi_{N } \left( {2z} \right) + \int_0^{ + \infty } {\frac{{t^{ - 1} }}{{\left( {1 + t} \right)^N }}{\bf F}\left( {\nu  + \frac{1}{2}, - \nu  + \frac{1}{2};0; - t} \right)\Pi _{N} \left( {2z\left( {1 + t} \right)} \right)dt} } \right),
\end{split}
\end{gather}
provided $\left| {\arg z} \right| < \pi$.
\end{theorem}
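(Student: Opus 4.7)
The natural strategy is to derive Theorem~\ref{theorem2} from Theorem~\ref{theorem1} by letting $\lambda \to 0^+$. Since the left-hand sides of \eqref{eq9} and \eqref{eq54} are independent of $\lambda$, so are their right-hand sides for $\Re(\lambda) > 0$; it therefore suffices to evaluate the limit $\lambda \to 0^+$ and to match it against \eqref{eq45} or \eqref{eq55}. I describe the argument for \eqref{eq45} in detail; the derivation of \eqref{eq55} is identical, with $\Pi$ in place of $\Lambda$.

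The power series definition of the regularized hypergeometric function provides the splitting
\[
\mathbf{F}\bigl(\nu+\tfrac{1}{2},-\nu+\tfrac{1}{2};\lambda;-t\bigr) = \frac{1}{\Gamma(\lambda)} + \Phi_\lambda(t),
\]
where $\Phi_\lambda(t) = O(t)$ as $t \to 0$, uniformly for $\lambda$ in a neighborhood of $0$, and $\Phi_0(t) = \mathbf{F}(\nu+\tfrac{1}{2},-\nu+\tfrac{1}{2};0;-t)$. Inserting this decomposition into \eqref{eq9} breaks the integral into a \emph{regular} piece (the contribution of $\Phi_\lambda$) and a \emph{singular} piece (the contribution of $1/\Gamma(\lambda)$).

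For the regular piece, the extra factor of $t$ hidden in $\Phi_\lambda(t)$ cancels the singularity of $t^{\lambda-1}$ at the origin, so the integrand is dominated uniformly in $\lambda$ near $0$ by an integrable function on $[0,\infty)$, and dominated convergence delivers the integral appearing inside \eqref{eq45}. For the singular piece, I set $g_\lambda(t) = (1+t)^{-(N+\lambda)}\Lambda_{N+\lambda}(2z(1+t))$ and split $\int_0^\infty = \int_0^1 + \int_1^\infty$. The prefactor $1/\Gamma(\lambda)$ kills the $\int_1^\infty$ contribution in the limit. In the $\int_0^1$ part, writing $g_\lambda(t) = g_\lambda(0) + [g_\lambda(t)-g_\lambda(0)]$ and using $1/\Gamma(\lambda) = \lambda/\Gamma(\lambda+1)$ converts the first summand into $g_\lambda(0)/\Gamma(\lambda+1) \to g_0(0) = \Lambda_N(2z)$, while the second summand contributes $\frac{\lambda}{\Gamma(\lambda+1)}\int_0^1 t^{\lambda-1}[g_\lambda(t)-g_\lambda(0)]\,dt$, which vanishes since its integrand is $O(t^\lambda)$. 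Multiplying throughout by the prefactor $(-1)^N\cos(\pi\nu)\Gamma(N+\lambda)/(\pi 2^N z^N)$, which tends to the prefactor in \eqref{eq45}, and adding the two contributions yields \eqref{eq45}.

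The main technical obstacle will be producing an integrable majorant on $[0,\infty)$, uniform in $\lambda$ in a neighborhood of $0$, so as to justify dominated convergence for the regular piece; this demands joint control of the basic terminant $\Lambda_{N+\lambda}(2z(1+t))$ and of the regularized hypergeometric factor at large $t$ uniformly in $\lambda$. This should follow from the known large-argument asymptotics of $\Lambda_p(w)$, together with the standard linear transformations for ${}_2F_1$ near argument $-\infty$, both of which depend smoothly on the small parameter $\lambda$.
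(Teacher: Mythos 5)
Your proposal is correct and follows essentially the same route as the paper: both pass to the limit $\lambda \to 0^+$ in Theorem \ref{theorem1}, isolate the $1/\Gamma(\lambda)$ singularity of the regularized hypergeometric function at $t=0$ to produce the extra term $\Lambda_N(2z)$ (resp.\ $\Pi_N(2z)$), and treat the remaining contribution by dominated convergence with exactly the majorants you indicate. The only cosmetic difference is that the paper subtracts the constant $\frac{1}{\Gamma(\lambda)}\Lambda_{N+\lambda}(2z)$ inside the integrand and evaluates the singular piece in one stroke via the beta integral $\int_0^{+\infty} t^{\lambda-1}(1+t)^{-N-\lambda}\,dt = \Gamma(\lambda)\Gamma(N)/\Gamma(N+\lambda)$, whereas you split off only $1/\Gamma(\lambda)$ and then dispose of the singular piece through a $[0,1]$--$[1,+\infty)$ split together with a Taylor expansion of $g_\lambda$ at $0$; both work.
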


Analogous expressions for the remainder terms $R_N^{(K')} \left( {z,\nu } \right)$ and $R_N^{(J')} \left( {z,\nu } \right)$ can be written down by applying Theorems \ref{theorem1} and \ref{theorem2} together with the functional equations
\begin{equation}\label{eq101}
2R_N^{(K')} \left( {z,\nu } \right) = R_N^{\left( K \right)} \left( {z,\nu  + 1} \right) + R_N^{\left( K \right)} \left( {z,\nu  - 1} \right)
\end{equation}
and
\begin{equation}\label{eq102}
2R_N^{(J')} \left( {z,\nu } \right) = R_N^{\left( J \right)} \left( {z,\nu  + 1} \right) + R_N^{\left( J \right)} \left( {z,\nu  - 1} \right).
\end{equation}
These functional equations follow directly from the connection formulae $- 2K'_\nu  \left( z \right) = K_{\nu + 1} \left( z \right) + K_{\nu  - 1} \left( z \right)$ and $-2J'_\nu  \left( z \right) = J_{\nu  + 1} \left( z \right) - J_{\nu  - 1} \left( z \right)$ (see, e.g., \cite[10.6.E1 and 10.29.E1]{NIST}).

It was shown by Boyd \cite{Boyd} that for any non-negative integer $N$, the remainder term $R_N^{\left( K \right)} \left( {z,\nu } \right)$ can be expressed as
\begin{equation}\label{eq1}
R_N^{\left( K \right)} \left( {z,\nu } \right) = \left( { - 1} \right)^N \left( {\frac{2}{\pi }} \right)^{\frac{1}{2}} \frac{{\cos \left( {\pi \nu } \right)}}{\pi }\frac{1}{{z^N }}\int_0^{ + \infty } {\frac{{t^{N - \frac{1}{2}} e^{ - t} }}{{1 + t/z}}K_\nu  \left( t \right)dt} ,
\end{equation}
provided that $\left| {\arg z} \right| < \pi$ and $\left| {\Re \left( \nu  \right)} \right| < N + \frac{1}{2}$. This representation of $R_N^{\left( K \right)} \left( {z,\nu } \right)$ is the central tool in Boyd's asymptotic analysis of the modified Bessel function $K_\nu\left(z\right)$, in particular, in the derivation of his bounds for $R_N^{\left( K \right)} \left( {z,\nu } \right)$. In the following two theorems, we present results for the remainders $R_N^{\left( J \right)} \left( {z,\nu } \right)$, $R_N^{(K')} \left( {z,\nu } \right)$ and $R_{N}^{(J')} \left( {z,\nu } \right)$ analogous to \eqref{eq1}. In our analysis, the purpose of these representations is not to find bounds for the remainders themselves but to prove error bounds for the re-expansions of these remainders.

\begin{theorem}\label{theorem8} Let $N$ be a non-negative integer and let $\nu$ be an arbitrary complex number such that $\left| {\Re \left( \nu  \right)} \right| < N + \frac{1}{2}$. Then
\begin{equation}\label{eq91}
R_N^{\left( J \right)} \left( {z,\nu } \right) = \left( { - 1} \right)^{\left\lfloor N/2\right\rfloor} \left( {\frac{2}{\pi }} \right)^{\frac{1}{2}} \frac{{\cos \left( {\pi \nu } \right)}}{\pi }\frac{1}{{z^N }}\int_0^{ + \infty } {\frac{{t^{N - \frac{1}{2}} e^{ - t} }}{{1 + \left(t/z\right)^2}}K_\nu  \left( t \right)dt} ,
\end{equation}
provided $\left| {\arg z} \right| < \frac{\pi}{2}$.
\end{theorem}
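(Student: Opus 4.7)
The strategy is to deduce \eqref{eq91} from Boyd's formula \eqref{eq1} by expressing $R_N^{(J)}(z,\nu)$ as a linear combination of $R_N^{(K)}(iz,\nu)$ and $R_N^{(K)}(-iz,\nu)$, and then collapsing the two resulting integrals via partial fractions.

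The first step is to establish the identity
\[
R_N^{(J)}(z,\nu) = \frac{(-1)^{\lfloor N/2 \rfloor}}{2} \left[ i^N R_N^{(K)}(-iz,\nu) + (-i)^N R_N^{(K)}(iz,\nu) \right].
\]
This can be read off from Theorem \ref{theorem1} by substituting $z \mapsto \pm iz$ in \eqref{eq9}: the prefactor $(-1)^N (\pm iz)^{-N}$ simplifies to $(\mp i)^N z^{-N}$, and the resulting integrals differ from \eqref{eq54} only in that $\Lambda_{N+\lambda}(\pm 2iz(1+t))$ appears where \eqref{eq54} has $\Pi_{N+\lambda}(2z(1+t))$. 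The defining relation $\Pi_p(w) = \tfrac{1}{2}(\Lambda_p(iw) + \Lambda_p(-iw))$ then immediately yields the claimed combination. (Equivalently, one may argue directly from $J_\nu = \tfrac{1}{2}(H_\nu^{(1)} + H_\nu^{(2)})$ together with the Hankel-$K$ connection formulas, which give $R_N^{(H^{(1)})}(z,\nu) = R_N^{(K)}(-iz,\nu)$ and $R_N^{(H^{(2)})}(z,\nu) = R_N^{(K)}(iz,\nu)$.)

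Next, I apply Boyd's representation \eqref{eq1} at argument $\pm iz$. The hypothesis $|\arg z| < \pi/2$ ensures $|\arg(\pm iz)| < \pi$, so \eqref{eq1} is valid; after simplifying $(-1)^N(\pm iz)^{-N} = (\mp i)^N z^{-N}$ and rewriting $1 + t/(\pm iz) = 1 \mp it/z$, one obtains
\[
R_N^{(K)}(\pm iz,\nu) = (\mp i)^N \left(\frac{2}{\pi}\right)^{1/2} \frac{\cos(\pi\nu)}{\pi} \frac{1}{z^N} \int_0^{+\infty} \frac{t^{N-1/2} e^{-t}}{1 \mp it/z}\, K_\nu(t)\, dt.
\]
Substituting these into the identity from the first step, the powers of $i$ collapse via $i^N(-i)^N = 1$, and the two integrals merge through the partial-fraction identity
\[
\frac{1}{1+it/z} + \frac{1}{1-it/z} = \frac{2}{1+(t/z)^2},
\]
giving precisely \eqref{eq91}.

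The main obstacle is the first step: carefully tracking branches and powers of $i$ so as to identify the correct linear combination relating $R_N^{(J)}$ to $R_N^{(K)}$ at shifted arguments. Once that decomposition is in hand, everything else is an elementary manipulation inside the integral, and no additional analytic work beyond the applicability of \eqref{eq1} in the enlarged sector $|\arg(\pm iz)| < \pi$ is needed.
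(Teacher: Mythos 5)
Your proposal is correct and follows essentially the same route as the paper, whose proof substitutes Boyd's representation \eqref{eq1} (with argument $ze^{\pm\frac{\pi}{2}i}$) into the relations \eqref{eq56} and \eqref{eq57}; your single identity for $R_N^{(J)}$ is exactly those two relations combined over the parities of $N$, and the partial-fraction collapse is the same computation. Note only a harmless sign slip in your displayed intermediate formula: $(-1)^N(\pm iz)^{-N}=(\pm i)^N z^{-N}$, not $(\mp i)^N z^{-N}$, which is what makes the powers of $i$ cancel as $i^N(-i)^N=1$ in the final combination, as you in fact use.
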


\begin{theorem}\label{theorem9} Let $N$ be a positive integer and let $\nu$ be an arbitrary complex number such that $\left| {\Re \left( \nu  \right)} \right| < N - \frac{1}{2}$. Then
\begin{equation}\label{eq92}
R_N^{(K')} \left( {z,\nu } \right) = \left( { - 1} \right)^N \left( {\frac{2}{\pi }} \right)^{\frac{1}{2}} \frac{{\cos \left( {\pi \nu } \right)}}{\pi }\frac{1}{{z^N }}\int_0^{ + \infty } {\frac{{t^{N - \frac{1}{2}} e^{ - t} }}{{1 + t/z}}K'_\nu  \left( t \right)dt} ,
\end{equation}
provided $\left| {\arg z} \right| < \pi$, and
\begin{equation}\label{eq93}
R_N^{(J')} \left( {z,\nu } \right) = \left( { - 1} \right)^{\left\lfloor N/2\right\rfloor} \left( {\frac{2}{\pi }} \right)^{\frac{1}{2}} \frac{{\cos \left( {\pi \nu } \right)}}{\pi }\frac{1}{{z^N }}\int_0^{ + \infty } {\frac{{t^{N - \frac{1}{2}} e^{ - t} }}{{1 + \left(t/z\right)^2}}K'_\nu  \left( t \right)dt} ,
\end{equation}
provided $\left| {\arg z} \right| < \frac{\pi}{2}$.
\end{theorem}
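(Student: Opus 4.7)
My plan is to derive both \eqref{eq92} and \eqref{eq93} purely algebraically from material already at hand: the functional equations \eqref{eq101} and \eqref{eq102}, the integral representations \eqref{eq1} (for $R_N^{(K)}$) and \eqref{eq91} (for $R_N^{(J)}$, established in Theorem \ref{theorem8}), together with the standard three-term recurrence
\[
K_{\nu+1}(t) + K_{\nu-1}(t) = -2K'_\nu(t)
\]
for the modified Bessel function itself.

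To obtain \eqref{eq92}, I would first invoke \eqref{eq101} and insert Boyd's formula \eqref{eq1} with $\nu$ replaced by $\nu+1$ and $\nu-1$. Applicability of \eqref{eq1} at both shifted orders requires $|\Re(\nu\pm 1)| < N+\tfrac12$, which forces $|\Re(\nu)| < N - \tfrac12$ (and in particular $N\ge 1$); this is exactly the hypothesis in the statement, and explains why the permissible strip narrows by one compared to Boyd's result. The sector $|\arg z| < \pi$ is inherited from \eqref{eq1}. Since $\cos(\pi(\nu\pm 1)) = -\cos(\pi\nu)$, both integrands acquire the same sign flip, so the two integrals in \eqref{eq101} combine into a single integral containing $K_{\nu+1}(t) + K_{\nu-1}(t)$, which collapses to $-2K'_\nu(t)$ by the recurrence. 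The two sign changes together with the division by $2$ from \eqref{eq101} combine to reproduce precisely the right-hand side of \eqref{eq92}.

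The derivation of \eqref{eq93} is formally identical, starting from \eqref{eq102} and Theorem \ref{theorem8}. The only extra observation is that the prefactor $(-1)^{\lfloor N/2\rfloor}$ in \eqref{eq91} depends only on $N$ and is unaffected by the shift $\nu\mapsto\nu\pm 1$; hence the same bookkeeping of $\cos(\pi(\nu\pm 1)) = -\cos(\pi\nu)$ and the same collapse via $K_{\nu+1}+K_{\nu-1} = -2K'_\nu$ go through verbatim, the factor $1/(1+(t/z)^2)$ being inert in the manipulation. The sector $|\arg z|<\pi/2$ and the condition $|\Re(\nu)|<N-\tfrac12$ again arise directly from the hypotheses of Theorem \ref{theorem8} applied at $\nu\pm 1$.

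There is no substantive analytic obstacle here; the argument is a short symbolic calculation once Theorem \ref{theorem8} and the identity \eqref{eq1} are available. The only point requiring genuine care is the parameter bookkeeping just described: the strip of validity contracts by one unit because \eqref{eq1} and \eqref{eq91} must be applied simultaneously at $\nu+1$ and $\nu-1$, and this contraction is reflected in the hypotheses of Theorem \ref{theorem9}.
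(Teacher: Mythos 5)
Your argument is correct and coincides with the paper's own (very brief) proof: the paper obtains \eqref{eq92} by combining \eqref{eq1} with \eqref{eq101} and the recurrence $-2K'_\nu(t)=K_{\nu+1}(t)+K_{\nu-1}(t)$, and \eqref{eq93} by combining \eqref{eq91} with \eqref{eq102} and the same recurrence. Your bookkeeping of the sign flip $\cos(\pi(\nu\pm1))=-\cos(\pi\nu)$ and of the contraction of the strip to $\left|\Re(\nu)\right|<N-\frac{1}{2}$ is exactly right.
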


The subsequent two theorems provide bounds for the remainders $R_N^{\left( K \right)} \left( {z,\nu } \right)$, $R_N^{\left( J \right)} \left( {z,\nu } \right)$, $R_N^{(K')} \left( {z,\nu } \right)$ and $R_N^{(J')} \left( {z,\nu } \right)$ when $\nu$ is real. These error bounds may be further simplified by employing the various estimates for $\mathop {\sup }\nolimits_{r \geq 1} \big| {\Lambda _{p} \left( {2zr } \right)} \big|$ and $\mathop {\sup }\nolimits_{r \geq 1} \big| {\Pi _{p} \left( {2zr } \right)} \big|$ given in Appendix \ref{appendixb}. Note that those estimates in Appendix \ref{appendixb} that depend on the (positive) order $p$ are monotonically increasing functions of $p$. Therefore, in the following theorems, we minimize, as much as our methods allow, the order of the basic terminants with respect to the following natural requirement: the form of each bound is directly related to the first omitted term of the corresponding asymptotic expansion. We remark that the bounds \eqref{eq24}--\eqref{eq701} remain true even if the orders of the basic terminants are taken to be any positive quantity at least $N + \max \left( {0,\frac{1}{2} - \left| \nu  \right|} \right)$.

\begin{theorem}\label{theorem3} Let $N$ be a non-negative integer and let $\nu$ be an arbitrary real number such that $\left| {\nu} \right| < N + \frac{1}{2}$. Then
\begin{equation}\label{eq24}
\big| {R_N^{\left( K \right)} \left( {z,\nu } \right)} \big| \le \frac{{\left| {a_N \left( \nu  \right)} \right|}}{{\left| z \right|^N }}\mathop {\sup }\limits_{r \geq 1} \big| {\Lambda _{N + \max \left( {0,\frac{1}{2} - \left| \nu  \right|} \right)} \left( {2zr } \right)} \big|,
\end{equation}
provided $\left| {\arg z} \right| < \frac{{3\pi }}{2}$, and
\begin{equation}\label{eq25}
\big| {R_N^{\left( J \right)} \left( {z,\nu } \right)} \big| \le \frac{{\left| {a_N \left( \nu  \right)} \right|}}{{\left| z \right|^N }}\mathop {\sup }\limits_{r \geq 1} \big| {\Pi _{N + \max \left( {0,\frac{1}{2} - \left| \nu  \right|} \right)} \left( {2zr } \right)} \big|,
\end{equation}
provided $\left| {\arg z} \right| < \pi$.
\end{theorem}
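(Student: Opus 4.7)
The strategy is to specialise the free parameter $\lambda$ in Theorems \ref{theorem1} and \ref{theorem2} so that, in each range of $|\nu|$, the order of the basic terminant is minimised and the regularised hypergeometric factor in the integrand has a definite sign on $(0,\infty)$. Once sign-definiteness is secured, taking absolute values and applying the elementary bound
\[
\bigl|\Lambda_{p}(2z(1+t))\bigr|\le \sup_{r\ge 1}\bigl|\Lambda_{p}(2zr)\bigr|\qquad (t\ge 0),
\]
which is valid because $1+t\ge 1$, reduces everything to a beta-type integral that, after simplification and comparison with the closed form \eqref{eq612} for $a_N(\nu)$, reproduces exactly $|a_N(\nu)|/|z|^N$ multiplied by the supremum.

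For $|\nu|<\tfrac12$ the natural choice is $\lambda=\tfrac12-|\nu|$, which is positive and so meets the hypotheses of Theorem \ref{theorem1}. The Euler identity $F(a,b;a;-t)=(1+t)^{-b}$ collapses the hypergeometric factor to $(1+t)^{-|\nu|-1/2}/\Gamma(\tfrac12-|\nu|)$, which is manifestly non-negative; after extracting the supremum of the terminant, what remains is a beta integral over $(0,\infty)$ whose closed form, combined with \eqref{eq612}, yields \eqref{eq24} with terminant order $N+\tfrac12-|\nu|$. The derivation of \eqref{eq25} is line by line the same, using \eqref{eq54} in place of \eqref{eq9} and $\Pi$ in place of $\Lambda$.

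For $|\nu|\ge\tfrac12$ the value $\lambda=0$ is forced if we wish to retain the minimal terminant order $N$, so I would invoke Theorem \ref{theorem2}. Using the power-series identity ${\bf F}(a,b;0;w)=ab\,w\,{\bf F}(a+1,b+1;2;w)$, the integrand recasts as
\[
\frac{(\nu^{2}-\tfrac14)\,{\bf F}(\nu+\tfrac32,-\nu+\tfrac32;2;-t)}{(1+t)^{N}},
\]
with the numerical prefactor $\nu^{2}-\tfrac14\ge 0$ throughout this range. The boundary term $\Lambda_N(2z)$ is trivially controlled by $\sup_{r\ge 1}|\Lambda_N(2zr)|$, and, granted the sign of the displayed hypergeometric, the proof concludes as in the previous case after verifying the identity
\[
1+\int_0^\infty\frac{{\bf F}(\nu+\tfrac12,-\nu+\tfrac12;0;-t)}{t(1+t)^{N}}\,dt=\frac{\Gamma(N+\tfrac12-\nu)\Gamma(N+\tfrac12+\nu)}{\Gamma(N)\Gamma(N+1)},
\]
which can be read off by comparing \eqref{eq45} with the leading term $a_N(\nu)/z^N$ of \eqref{eq70} in the limit $z\to +\infty$ along the positive axis, where $\Lambda_N(2zr)\to 1$ uniformly in $r\ge 1$. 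Again, \eqref{eq55} handles $R_N^{(J)}$ identically.

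The main obstacle is establishing ${\bf F}(\nu+\tfrac32,-\nu+\tfrac32;2;-t)\ge 0$ for $t>0$ throughout $\tfrac12\le|\nu|<N+\tfrac12$. Euler's integral representation with parameters $(b,c)=(-\nu+\tfrac32,2)$ or $(\nu+\tfrac32,2)$ settles the sub-range $|\nu|<\tfrac32$ immediately, since the resulting integrand is a product of non-negative powers. For larger $|\nu|$, I would combine the Euler transformation $F(a,b;c;w)=(1-w)^{c-a-b}F(c-a,c-b;c;w)$ with the $\nu\leftrightarrow-\nu$ symmetry of the hypergeometric and the vanishing of the whole remainder at half-odd-integer $\nu$ (where $\cos(\pi\nu)=0$), to induct across successive unit intervals of $|\nu|$ until an admissible parameter configuration is reached.
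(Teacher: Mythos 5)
Your overall strategy coincides with the paper's: start from the representations \eqref{eq9}/\eqref{eq45} (and \eqref{eq54}/\eqref{eq55} for the $J$-case), establish that the regularized hypergeometric factor has a fixed sign on $(0,+\infty)$, pull $\sup_{r\ge 1}|\Lambda_p(2zr)|$ out of the integral, and identify what remains with $|a_N(\nu)|$ through a coefficient representation. Where you differ is in how the endpoint order $N+\max(0,\tfrac12-|\nu|)$ is reached. The paper works with an arbitrary real $\lambda>\max(0,\tfrac12-|\nu|)$, where Lemma \ref{lemma2} supplies non-negativity, identifies the integral with $a_N(\nu)$ via \eqref{eq17}, and only then lets $\lambda$ decrease to $\max(0,\tfrac12-|\nu|)$ in the finished inequality. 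You instead evaluate at the endpoint directly: for $|\nu|<\tfrac12$ the choice $\lambda=\tfrac12-|\nu|$ collapses the hypergeometric factor to $(1+t)^{-|\nu|-1/2}/\Gamma(\tfrac12-|\nu|)$ and an explicit beta integral reproduces \eqref{eq612} exactly---this is correct and arguably cleaner, since it avoids both Lemma \ref{lemma2} and the limit passage under the supremum; for $|\nu|\ge\tfrac12$ you use Theorem \ref{theorem2} at $\lambda=0$ together with the limiting ($z\to+\infty$) form of the coefficient identity, which is also sound.

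The one step that does not hold up as written is your treatment of the positivity of ${\bf F}(\nu+\tfrac32,-\nu+\tfrac32;2;-t)$ when $|\nu|\ge\tfrac32$. Euler's integral representation indeed fails there ($c-b<0$ for either ordering of the upper parameters), and the proposed induction ``across successive unit intervals'' resting on the vanishing of the remainder at half-odd-integer $\nu$ is not an argument: the vanishing of $\cos(\pi\nu)$ kills the prefactor of the whole remainder but says nothing about the sign of the hypergeometric factor inside the integral, and no actual recursion in $\nu$ is exhibited. The gap is easily closed by the same device the paper uses in \eqref{eq22} and \eqref{eq44}: the Pfaff transformation gives ${\bf F}(\nu+\tfrac32,-\nu+\tfrac32;2;-t)=(1+t)^{-\nu-3/2}\,{\bf F}(\nu+\tfrac32,\nu+\tfrac12;2;t/(1+t))$, and for $\nu\ge\tfrac12$ (the case $\nu\le-\tfrac12$ following from the $\nu\mapsto-\nu$ symmetry of the left-hand side) every coefficient of the last series is non-negative, so the function is non-negative for all $t>0$. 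Equivalently, the sign statement you need is precisely display \eqref{eq48} inside the paper's Lemma \ref{lemma3}. With that substitution your proof is complete.
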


\begin{theorem}\label{theorem4} Let $N$ be a positive integer and let $\nu$ be an arbitrary real number such that $\left| {\nu} \right| < N - \frac{1}{2}$. Then
\begin{equation}\label{eq700}
\big| {R_N^{(K')} \left( {z,\nu } \right)} \big| \le \frac{{\left| {b_N \left( \nu  \right)} \right|}}{{\left| z \right|^N }}\mathop {\sup }\limits_{r \geq 1} \big| {\Lambda _{N} \left( {2zr } \right)} \big|,
\end{equation}
provided $\left| {\arg z} \right| < \frac{{3\pi }}{2}$, and
\begin{equation}\label{eq701}
\big| {R_N^{(J')} \left( {z,\nu } \right)} \big| \le \frac{{\left| {b_N \left( \nu  \right)} \right|}}{{\left| z \right|^N }}\mathop {\sup }\limits_{r \geq 1} \big| {\Pi _{N} \left( {2zr } \right)} \big|,
\end{equation}
provided $\left| {\arg z} \right| < \pi$.
\end{theorem}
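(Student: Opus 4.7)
The plan is to prove (\ref{eq700}) by combining the functional equation (\ref{eq101}) with the integral representation of Theorem~\ref{theorem2}, and then extract the supremum of $|\Lambda_N|$. Since the hypothesis $|\nu| < N - \tfrac{1}{2}$ implies $|\nu \pm 1| < N + \tfrac{1}{2}$, one may substitute (\ref{eq45}) with $\nu$ replaced by $\nu+1$ and by $\nu-1$ into $2R_N^{(K')}(z,\nu) = R_N^{(K)}(z,\nu+1) + R_N^{(K)}(z,\nu-1)$. Using $\cos(\pi(\nu \pm 1)) = -\cos(\pi\nu)$, this collapses to
\[
R_N^{(K')}(z,\nu) = (-1)^{N+1}\frac{\cos(\pi\nu)}{\pi}\frac{\Gamma(N)}{2^N z^N}\left[\Lambda_N(2z) + \int_0^{+\infty}\frac{G(\nu,t)}{2\,t\,(1+t)^N}\Lambda_N(2z(1+t))\,dt\right],
\]
with $G(\nu,t) = \mathbf{F}(\nu+\tfrac{3}{2},-\nu-\tfrac{1}{2};0;-t) + \mathbf{F}(\nu-\tfrac{1}{2},-\nu+\tfrac{3}{2};0;-t)$. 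Applying the triangle inequality and pulling out $\sup_{r \ge 1}|\Lambda_N(2zr)|$ (valid since $r=1$ is admissible) reduces the matter to evaluating the scalar $1 + \tfrac{1}{2}\int_0^{+\infty}|G(\nu,t)|\,t^{-1}(1+t)^{-N}\,dt$ in closed form.

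The next step is to recast $G(\nu,t)$ so that its sign becomes manifest. Using the identity $\mathbf{F}(a,b;0;-t) = -ab\,t\,\mathbf{F}(a+1,b+1;2;-t)$, the kernel becomes
\[
G(\nu,t) = t\Bigl[\bigl((\nu+1)^2 - \tfrac{1}{4}\bigr)\mathbf{F}\bigl(\nu+\tfrac{5}{2},-\nu+\tfrac{1}{2};2;-t\bigr) + \bigl((\nu-1)^2 - \tfrac{1}{4}\bigr)\mathbf{F}\bigl(\nu+\tfrac{1}{2},-\nu+\tfrac{5}{2};2;-t\bigr)\Bigr].
\]
The Pfaff transformation $F(a,b;c;-t) = (1+t)^{-b}F(c-a,b;c;t/(1+t))$ (or its companion acting on $a$) reduces each hypergeometric factor to a Gauss series with positive parameters and argument in $(0,1)$, hence strictly positive on $(0,+\infty)$. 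Granted $G(\nu,t) \ge 0$, the bracket is identified by letting $z \to +\infty$ in the displayed representation: since $\Lambda_N(w) \to 1$ as $|w|\to\infty$ in the stated sector and $R_N^{(K')}(z,\nu) \sim b_N(\nu)/z^N$, the bracket must equal $(-1)^{N+1}\,2^N \pi\, b_N(\nu)/(\cos(\pi\nu)\Gamma(N))$. Combined with the elementary observation that $a_N(\nu+1)$ and $a_N(\nu-1)$ share the common sign $-(-1)^N\cos(\pi\nu)$ for real $\nu$ with $|\nu| < N - \tfrac{1}{2}$ (all gamma factors in their closed forms being evaluated at positive arguments), so that $|b_N(\nu)| = \tfrac{1}{2}(|a_N(\nu+1)| + |a_N(\nu-1)|)$, this reproduces (\ref{eq700}) exactly.

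The main obstacle is the sign-preservation of $G(\nu,t)$ in the regimes where one of the coefficients $(\nu \pm 1)^2 - \tfrac{1}{4}$ is negative, which occurs for $|\nu \pm 1| < \tfrac{1}{2}$ (an unavoidable possibility once $N \ge 2$). A naive application of Theorem~\ref{theorem3} via the functional equation would in that case introduce a basic terminant of order strictly larger than $N$, producing a weaker bound than claimed. Settling the positivity of the specific linear combination requires a monotonicity/comparison argument on the ratio of the two hypergeometric factors, ensuring that the positive term continues to dominate throughout $(0,+\infty)$; the bracket evaluates to $2\nu^2 + \tfrac{3}{2} > 0$ at $t = 0$, which anchors the argument. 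Finally, (\ref{eq701}) is proved by the identical scheme, starting from (\ref{eq102}) and using (\ref{eq55}) in place of (\ref{eq45}), with $\Pi_N$ replacing $\Lambda_N$ throughout; the kernel $G(\nu,t)$ is unchanged, so the sign analysis transfers verbatim.
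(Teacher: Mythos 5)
Your overall strategy coincides with the paper's: substitute \eqref{eq45} into the functional relation \eqref{eq101} to obtain the representation \eqref{eq871}, pull out $\sup_{r\ge1}|\Lambda_N(2zr)|$, and identify the remaining scalar with $|b_N(\nu)|$ via the integral formula for the coefficients (your limit $z\to+\infty$ is a legitimate rederivation of \eqref{eq19}). The algebraic reductions you perform are also correct: the identity ${\bf F}(a,b;0;-t)=-ab\,t\,{\bf F}(a+1,b+1;2;-t)$ holds, and it correctly isolates the coefficients $(\nu\pm1)^2-\tfrac14$.

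However, there is a genuine gap at the one point where all the difficulty of this theorem is concentrated: the nonnegativity of
\[
G(\nu,t)={\bf F}\left(\nu+\tfrac32,-\nu-\tfrac12;0;-t\right)+{\bf F}\left(\nu-\tfrac12,-\nu+\tfrac32;0;-t\right)
\]
for all $t>0$, which is the paper's Lemma \ref{lemma3}. You correctly observe that the only problematic regime is $\tfrac12<|\nu|<\tfrac32$ (unavoidable once $N\ge2$), where one of the two coefficients $(\nu\pm1)^2-\tfrac14$ is negative and the corresponding term genuinely opposes the other, but you do not prove that the positive term dominates: you only note that the bracket equals $2\nu^2+\tfrac32>0$ at $t=0$ (a local statement) and defer the global claim to an unspecified ``monotonicity/comparison argument on the ratio of the two hypergeometric factors.'' That is precisely the step that cannot be waved through; without it the claimed order $N$ of the basic terminant is not established (a naive use of Theorem \ref{theorem3} through \eqref{eq101} would, as you say, only give order $N+\max(0,\tfrac12-|\nu\pm1|)>N$). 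The paper closes this gap by a short algebraic device: it writes ${\bf F}(\nu+\tfrac12,-\nu+\tfrac12;0;-t)$ as an associated Legendre function and invokes the three-term recurrence for $P^1_{\nu\pm\tfrac12}(1+2t)$, which re-expresses $G$ as $\tfrac{1}{2\nu}({\bf F}(\nu+\tfrac32,-\nu-\tfrac12;0;-t)-{\bf F}(\nu-\tfrac12,-\nu+\tfrac32;0;-t))+2(1+2t){\bf F}(\nu+\tfrac12,-\nu+\tfrac12;0;-t)$; each of the three pieces has a determinate sign in the range $\tfrac12<\nu<\tfrac32$ by the sign rule \eqref{eq48}, and all contribute positively. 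You would need to supply either this contiguous-relation argument or a worked-out comparison of the two ${\bf F}(\cdot,\cdot;2;\cdot)$ factors before your proof of \eqref{eq700} (and hence of \eqref{eq701}, which inherits the same kernel) is complete.
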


It is well known that in the special case when $z$ is positive and $\nu$ is real, $\left|\nu\right|<N+\frac{1}{2}$, we have
\begin{equation}\label{eq711}
R_N^{\left( K \right)} \left( {z,\nu } \right) = \frac{{a_N \left( \nu  \right)}}{{z^N }}\theta _N^{\left( K \right)} \left( {z,\nu } \right) \; \text{ and } \; R_N^{\left( J \right)} \left( {z,\nu } \right) = \left( { - 1} \right)^{\left\lceil {N/2} \right\rceil } \frac{{a_N \left( \nu  \right)}}{{z^N }}\theta _N^{\left( J \right)} \left( {z,\nu } \right).
\end{equation}
Here $0 < \theta _N^{\left( K \right)} \left( {z,\nu } \right),\theta _N^{\left( J \right)} \left( {z,\nu } \right) < 1$ are appropriate numbers that depend on $z, \nu$ and $N$ (see, for instance, \cite[\S 10.40(ii)]{NIST} or \cite[pp. 207 and 209]{Watson}). Said differently, the remainder terms $R_N^{\left( K \right)} \left( {z,\nu } \right)$ and $R_N^{\left( J \right)} \left( {z,\nu } \right)$ do not exceed the corresponding first neglected terms in absolute value and have the same sign provided that $z > 0$ and $\left|\nu\right|<N+\frac{1}{2}$. In the theorem below, we present the analogous results for the remainder terms $R_N^{(K')} \left( {z,\nu } \right)$ and $R_N^{(J')} \left( {z,\nu } \right)$.

\begin{theorem}\label{theorem5} Let $N$ be a positive integer, $z$ be a positive real number, and let $\nu$ be an arbitrary real number such that $\left| {\nu} \right| < N - \frac{1}{2}$. Then
\begin{equation}\label{eq321}
R_N^{(K')} \left( {z,\nu } \right) = \frac{{b_N \left( \nu  \right)}}{{z^N }}\theta _N^{(K')} \left( {z,\nu } \right) \; \text{ and } \; R_N^{(J')} \left( {z,\nu } \right) = \left( { - 1} \right)^{\left\lceil {N/2} \right\rceil } \frac{{b_N \left( \nu  \right)}}{{z^N }}\theta _N^{(J')} \left( {z,\nu } \right),
\end{equation}
where $0 < \theta _N^{(K')} \left( {z,\nu } \right),\theta _N^{(J')} \left( {z,\nu } \right) < 1$ are suitable numbers that depend on $z, \nu$ and $N$.
\end{theorem}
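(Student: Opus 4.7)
The plan is to deduce Theorem \ref{theorem5} from the classical sign-and-magnitude result \eqref{eq711} by means of the functional equations \eqref{eq101} and \eqref{eq102}. Since $|\nu|<N-\frac{1}{2}$ implies $|\nu\pm 1|<N+\frac{1}{2}$, the hypothesis of \eqref{eq711} is satisfied at the shifted arguments $\nu\pm 1$; applying it produces $\theta_\pm^{(K)},\theta_\pm^{(J)}\in(0,1)$ with
\[
R_N^{(K)}(z,\nu\pm 1)=\frac{a_N(\nu\pm 1)}{z^N}\,\theta_\pm^{(K)},\qquad R_N^{(J)}(z,\nu\pm 1)=(-1)^{\lceil N/2\rceil}\frac{a_N(\nu\pm 1)}{z^N}\,\theta_\pm^{(J)}.
\]
Substitution into \eqref{eq101} and \eqref{eq102} expresses $2R_N^{(K')}(z,\nu)$ and $(-1)^{\lceil N/2\rceil}\cdot 2R_N^{(J')}(z,\nu)$ in the common form $(a_N(\nu+1)\theta_+ + a_N(\nu-1)\theta_-)/z^N$. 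I would also record the algebraic identity $2b_N(\nu)=a_N(\nu+1)+a_N(\nu-1)$, which drops out by inserting the expansions \eqref{eq88}, \eqref{eq222}, \eqref{eq86}, \eqref{eq888} into the connection formulae $-2K'_\nu=K_{\nu+1}+K_{\nu-1}$ and $-2J'_\nu=J_{\nu+1}-J_{\nu-1}$ (using $\omega_{\nu\pm 1}=\omega\mp\frac{\pi}{2}$) and equating coefficients of like powers of $1/z$.

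The main obstacle is a sign-consistency lemma: under $|\nu|<N-\frac{1}{2}$, the three quantities $a_N(\nu+1)$, $a_N(\nu-1)$ and $b_N(\nu)$ must share a common sign, so that the linear combinations above become convex combinations of same-sign terms after factoring out that sign. To establish this I would apply the product formula \eqref{eq612} together with the factorisation $4(\nu\pm 1)^2-(2k-1)^2=(2\nu\pm 2-(2k-1))(2\nu\pm 2+(2k-1))$, and re-pair the $4N$ resulting linear factors in $a_N(\nu+1)\,a_N(\nu-1)$ into conjugate pairs $(2\nu+A)(2\nu-A)=4\nu^2-A^2$ to obtain
\[
a_N(\nu+1)\,a_N(\nu-1)=\frac{1}{(8^N N!)^2}\Bigl[\prod_{j=1}^{N-1}\bigl(4\nu^2-(2j-1)^2\bigr)\Bigr]^2 \bigl(4\nu^2-(2N-1)^2\bigr)\bigl(4\nu^2-(2N+1)^2\bigr).
\]
The bracketed square is non-negative, and when $|\nu|<N-\frac{1}{2}$ both trailing factors are strictly negative, so their product is positive. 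Hence $a_N(\nu+1)\,a_N(\nu-1)\geq 0$, the two values share a sign, and the identity $2b_N(\nu)=a_N(\nu+1)+a_N(\nu-1)$ endows $b_N(\nu)$ with the same sign.

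With sign-consistency in hand, let $s$ denote the common sign. When $b_N(\nu)\neq 0$ the strict bounds $0<\theta_\pm<1$ yield
\[
0<s\bigl(a_N(\nu+1)\theta_+^{(K)}+a_N(\nu-1)\theta_-^{(K)}\bigr)<|a_N(\nu+1)|+|a_N(\nu-1)|=2|b_N(\nu)|,
\]
and the analogous bound holds for the $J'$ expression. Defining
\[
\theta_N^{(K')}(z,\nu):=\frac{R_N^{(K')}(z,\nu)\,z^N}{b_N(\nu)},\qquad \theta_N^{(J')}(z,\nu):=(-1)^{\lceil N/2\rceil}\frac{R_N^{(J')}(z,\nu)\,z^N}{b_N(\nu)}
\]
therefore gives numbers in $(0,1)$, establishing \eqref{eq321}. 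In the degenerate case $b_N(\nu)=0$, the equality $a_N(\nu+1)+a_N(\nu-1)=0$ combined with $a_N(\nu+1)\,a_N(\nu-1)\geq 0$ forces $a_N(\nu\pm 1)=0$; by \eqref{eq711} this gives $R_N^{(K')}(z,\nu)=R_N^{(J')}(z,\nu)=0$, and any $\theta\in(0,1)$ fulfils \eqref{eq321}.
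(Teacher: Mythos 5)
Your proposal is correct, but it takes a genuinely different route from the paper. The paper works directly from its own integral representation \eqref{eq871} (obtained by feeding \eqref{eq45} into \eqref{eq101}): it invokes Lemma \ref{lemma3} to see that the two hypergeometric terms in the integrand have a common sign, uses $0<\Lambda_p(w)<1$ and $0<\Pi_p(w)<1$ for $w>0$ (Proposition \ref{propb1}) together with the mean value theorem of integration to extract a factor $\theta\in(0,1)$, and then identifies the remaining integral with $b_N(\nu)$ via \eqref{eq19}. You instead push the classical first-omitted-term result \eqref{eq711} through the functional equations \eqref{eq101}--\eqref{eq102} at the shifted orders $\nu\pm1$, and replace the analytic positivity input by a purely algebraic sign lemma: your re-pairing of the linear factors of $a_N(\nu+1)\,a_N(\nu-1)$ is correct (I checked the multiplicities; the shifts $\pm1,\pm3,\dots,\pm(2N-3)$ each occur twice and $\pm(2N-1),\pm(2N+1)$ once, giving exactly your formula), and combined with $2b_N(\nu)=a_N(\nu+1)+a_N(\nu-1)$ (the paper's \eqref{eq688}) it yields the needed convexity; your treatment of the degenerate case $b_N(\nu)=0$ is also sound. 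What each approach buys: yours is more elementary and avoids the hypergeometric machinery entirely, but it outsources the analytic content to \eqref{eq711}, which the paper only cites as known; the paper's argument is self-contained relative to its own representations and reuses the same Lemma \ref{lemma3} that drives the bound \eqref{eq700}, so it is structurally uniform with the rest of Section \ref{section3}. It is worth noting that your factorisation identity for $a_N(\nu+1)\,a_N(\nu-1)$ is in effect the discrete, coefficient-level counterpart of the paper's Lemma \ref{lemma3}, which asserts the same one-signedness at the level of the integrand.
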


We now consider the case when $\nu$ is complex. In the following theorems, we have chosen $N+\frac{1}{2}$ as the order of the basic terminants because this is the value that allows us to express the bounds in a form closely related to the first omitted term in the corresponding asymptotic expansion. The precise relation to the first omitted term is examined in Section \ref{section5}.

\begin{theorem}\label{theorem6} Let $N$ be a non-negative integer and let $\nu$ be an arbitrary complex number such that $\left| {\Re \left( \nu  \right)} \right| < N + \frac{1}{2}$. Then
\begin{equation}\label{eq01}
\big| {R_N^{\left( K \right)} \left( {z,\nu } \right)} \big| \le \frac{{\left| {\cos \left( {\pi \nu } \right)} \right|}}{{\left| {\cos \left( {\pi \Re \left( \nu  \right)} \right)} \right|}}\frac{{\left| {a_N \left( {\Re \left( \nu  \right)} \right)} \right|}}{{\left| z \right|^N }}\mathop {\sup }\limits_{r \geq 1} \big| {\Lambda _{N + \frac{1}{2}} \left( {2zr } \right)} \big|,
\end{equation}
provided $\left| {\arg z} \right| < \frac{3\pi}{2}$, and
\begin{equation}\label{eq02}
\big| {R_N^{\left( J \right)} \left( {z,\nu } \right)} \big| \le \frac{{\left| {\cos \left( {\pi \nu } \right)} \right|}}{{\left| {\cos \left( {\pi \Re \left( \nu  \right)} \right)} \right|}}\frac{{\left| {a_N \left( {\Re \left( \nu  \right)} \right)} \right|}}{{\left| z \right|^N }}\mathop {\sup }\limits_{r \geq 1} \big| {\Pi _{N + \frac{1}{2}} \left( {2zr } \right)} \big|,
\end{equation}
provided $\left| {\arg z} \right| < \pi$. If $2\Re \left( \nu  \right)$ is an odd integer, then the limiting values have to be taken in these bounds.
\end{theorem}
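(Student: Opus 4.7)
The plan is to specialize Theorem \ref{theorem1} to $\lambda = \tfrac{1}{2}$ and then estimate the resulting one-dimensional integral directly. This value of $\lambda$ is distinguished because the regularized hypergeometric function collapses in closed form via the classical quadratic transformation ${}_2F_1\!\left(a, 1 - a; \tfrac{1}{2}; \sin^2\theta\right) = \cos((2a - 1)\theta)/\cos\theta$: setting $a = \nu + \tfrac{1}{2}$ and choosing $\sin\theta = i\sqrt{t}$ so that $\cos\theta = \sqrt{1+t}$ and $\theta = i\sinh^{-1}\sqrt{t}$, one obtains
\[
\mathbf{F}\!\left(\nu + \tfrac{1}{2},\, -\nu + \tfrac{1}{2};\, \tfrac{1}{2};\, -t\right) = \frac{\cosh\!\left(2\nu \sinh^{-1}\sqrt{t}\right)}{\sqrt{\pi}\,\sqrt{1+t}}.
\]
Substituting this into \eqref{eq9} would express $R_N^{(K)}(z,\nu)$ as a single integral over $[0, +\infty)$ whose integrand is proportional to $t^{-1/2}(1+t)^{-N-1}\cosh(2\nu\sinh^{-1}\sqrt{t})\,\Lambda_{N+1/2}(2z(1+t))$.

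From here I would take absolute values and apply two pointwise estimates. First, the elementary identity $|\cosh(x + iy)|^2 = \cosh^2 x - \sin^2 y$ yields
\[
\left|\cosh\!\left(2\nu\sinh^{-1}\sqrt{t}\right)\right| \le \cosh\!\left(2|\Re\nu|\sinh^{-1}\sqrt{t}\right),
\]
and this is the sole source of the factor $|\cos(\pi\nu)|/|\cos(\pi\Re\nu)|$ that eventually appears in the final bound. Second, since $1 + t \ge 1$ for $t \ge 0$, the factor $|\Lambda_{N+1/2}(2z(1+t))|$ is majorised by $\sup_{r \ge 1}|\Lambda_{N+1/2}(2zr)|$ and may be pulled outside. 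What remains is the elementary integral
\[
\int_0^{+\infty}\frac{t^{-1/2}\cosh\!\left(2|\Re\nu|\sinh^{-1}\sqrt{t}\right)}{(1+t)^{N+1}}\,dt,
\]
and the substitution $t = \sinh^2 u$ (so $1 + t = \cosh^2 u$ and $dt = 2\sinh u\,\cosh u\,du$) reduces it to $2\int_0^{+\infty}\cosh(2|\Re\nu|u)/\cosh^{2N+1}u\,du$. This beta-type integral converges precisely under the standing hypothesis $|\Re\nu| < N + \tfrac{1}{2}$ and evaluates to $2^{2N}\Gamma(N + \tfrac{1}{2} - |\Re\nu|)\Gamma(N + \tfrac{1}{2} + |\Re\nu|)/\Gamma(2N+1)$ via the change of variables $v = e^{2u}$ together with the Euler beta integral.

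Finally, I would collect the constants and apply the Legendre duplication formula $\Gamma(2N+1) = 2^{2N}\Gamma(N+1)\Gamma(N + \tfrac{1}{2})/\sqrt{\pi}$: the prefactor telescopes to $(|\cos(\pi\nu)|/\pi)\,\Gamma(N + \tfrac{1}{2} + |\Re\nu|)\Gamma(N + \tfrac{1}{2} - |\Re\nu|)/(2^N\Gamma(N+1))$, which by \eqref{eq612} is exactly $(|\cos(\pi\nu)|/|\cos(\pi\Re\nu)|)\,|a_N(\Re\nu)|$, and \eqref{eq01} follows. The bound \eqref{eq02} follows line-by-line from \eqref{eq54}, the only change being that $\Pi_{N+1/2}$ replaces $\Lambda_{N+1/2}$ and the sign $(-1)^{\lfloor N/2\rfloor}$ replaces $(-1)^N$; the integrand and all estimates are otherwise identical. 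When $2\Re\nu$ is an odd integer with $|\Re\nu| < N + \tfrac{1}{2}$, the ratio $a_N(\Re\nu)/\cos(\pi\Re\nu)$ extends to an entire function of $\Re\nu$ since the common zeros cancel, and both sides of the claimed inequality are continuous in $\Re\nu$, so the bound is obtained by passing to the limit. I expect the only delicate step to be the cosh estimate in the second paragraph: it is sharp precisely when $\Im\nu = 0$, which is the reason the bound collapses to Theorem \ref{theorem3} in the real case.
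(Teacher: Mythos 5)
Your proposal is correct and follows essentially the same route as the paper: specialize \eqref{eq9} and \eqref{eq54} to $\lambda=\tfrac{1}{2}$, use the closed form of ${\bf F}\left(\nu+\tfrac{1}{2},-\nu+\tfrac{1}{2};\tfrac{1}{2};-t\right)$ to get the pointwise bound by ${\bf F}\left(\Re\nu+\tfrac{1}{2},-\Re\nu+\tfrac{1}{2};\tfrac{1}{2};-t\right)$ (your $\cosh$ form is the paper's inequality \eqref{eq03} in a different parametrization), pull out $\sup_{r\ge 1}$ of the basic terminant, and identify the remaining integral with $\left|a_N(\Re\nu)\right|/\left|\cos(\pi\Re\nu)\right|$. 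The only (immaterial) divergence is that you evaluate that integral directly via $t=\sinh^2 u$ and the beta integral, whereas the paper simply invokes its representation \eqref{eq17} of $a_N$ with $\lambda=\tfrac{1}{2}$; your concluding aside that the bound ``collapses to Theorem \ref{theorem3}'' when $\Im\nu=0$ is slightly off (the terminant order there is $N+\max(0,\tfrac{1}{2}-|\nu|)$, not $N+\tfrac{1}{2}$), but this does not affect the proof.
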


\begin{theorem}\label{theorem7} Let $N$ be a positive integer and let $\nu$ be an arbitrary complex number such that $\left| {\Re \left( \nu  \right)} \right| < N - \frac{1}{2}$. Then
\begin{equation}\label{eq220}
\big| {R_N^{(K')} \left( {z,\nu } \right)} \big| \le \frac{{\left| {\cos \left( {\pi \nu } \right)} \right|}}{{\left| {\cos \left( {\pi \Re \left( \nu  \right)} \right)} \right|}}\frac{{\left| {b_N \left( {\Re \left( \nu  \right)} \right)} \right|}}{{\left| z \right|^N }}\mathop {\sup }\limits_{r \geq 1} \big| {\Lambda _{N + \frac{1}{2}} \left( {2zr } \right)} \big|, 
\end{equation}
provided $\left| {\arg z} \right| < \frac{{3\pi }}{2}$, and
\begin{equation}\label{eq221}
\big| {R_N^{(J')} \left( {z,\nu } \right)} \big| \le \frac{{\left| {\cos \left( {\pi \nu } \right)} \right|}}{{\left| {\cos \left( {\pi \Re \left( \nu  \right)} \right)} \right|}}\frac{{\left| {b_N \left( {\Re \left( \nu  \right)} \right)} \right|}}{{\left| z \right|^N }}\mathop {\sup }\limits_{r \geq 1} \big| {\Pi _{N + \frac{1}{2}} \left( {2zr } \right)} \big|,
\end{equation}
provided $\left| {\arg z} \right| < \pi$. If $2\Re \left( \nu  \right)$ is an odd integer, then the limiting values have to be taken in these bounds.
\end{theorem}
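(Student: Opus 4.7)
The plan is to deduce both bounds directly from Theorem \ref{theorem6} by invoking the functional equations \eqref{eq101} and \eqref{eq102}, which express $R_N^{(K')}(z,\nu)$ and $R_N^{(J')}(z,\nu)$ as averages of the corresponding un-primed remainders evaluated at $\nu\pm 1$. The hypothesis $\left|\Re(\nu)\right|<N-\tfrac{1}{2}$ ensures $\left|\Re(\nu\pm 1)\right|<N+\tfrac{1}{2}$, so the bounds \eqref{eq01} and \eqref{eq02} of Theorem \ref{theorem6} apply to each of $R_N^{(K)}(z,\nu\pm 1)$ and $R_N^{(J)}(z,\nu\pm 1)$. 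Since $\cos(\pi(\nu\pm 1))=-\cos(\pi\nu)$ and $\cos(\pi\Re(\nu\pm 1))=-\cos(\pi\Re(\nu))$, the cosine prefactor in Theorem \ref{theorem6} is invariant under the two shifts. Taking absolute values in \eqref{eq101} and \eqref{eq102} and applying Theorem \ref{theorem6} termwise therefore yields bounds of exactly the desired form, but with $\left|b_N(\Re(\nu))\right|$ replaced by $\tfrac{1}{2}\bigl(\left|a_N(\Re(\nu)+1)\right|+\left|a_N(\Re(\nu)-1)\right|\bigr)$.

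What remains is the coefficient identity
\[
\left|a_N(x+1)\right|+\left|a_N(x-1)\right|=2\left|b_N(x)\right| \qquad (x\in\mathbb{R},\;\left|x\right|<N-\tfrac{1}{2}).
\]
To verify it I would begin from the explicit formula \eqref{eq612} and repeatedly apply $\Gamma(s+1)=s\Gamma(s)$ to obtain
\[
\Gamma\bigl(N+\tfrac{3}{2}+x\bigr)=(N+\tfrac{1}{2}+x)(N-\tfrac{1}{2}+x)\,\Gamma\bigl(N-\tfrac{1}{2}+x\bigr)
\]
together with its mirror image under $x\mapsto -x$, so that both $a_N(x+1)$ and $a_N(x-1)$ are rewritten in terms of the common factor $\Gamma(N-\tfrac{1}{2}+x)\Gamma(N-\tfrac{1}{2}-x)$. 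The elementary identity
\[
(N+\tfrac{1}{2}+x)(N-\tfrac{1}{2}+x)+(N+\tfrac{1}{2}-x)(N-\tfrac{1}{2}-x)=\tfrac{1}{2}(4x^{2}+4N^{2}-1)
\]
then shows, after comparison with the explicit formula for $b_N$ displayed just after \eqref{eq612}, that $a_N(x+1)+a_N(x-1)=2b_N(x)$. For $\left|x\right|<N-\tfrac{1}{2}$ all the Gamma values appearing, together with $4x^{2}+4N^{2}-1$, are strictly positive, so $a_N(x+1)$ and $a_N(x-1)$ carry the common sign $(-1)^{N+1}\mathrm{sign}(\cos(\pi x))$; consequently the triangle inequality degenerates to equality and $\left|a_N(x+1)\right|+\left|a_N(x-1)\right|=\left|a_N(x+1)+a_N(x-1)\right|=2\left|b_N(x)\right|$.

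Assembling these ingredients yields \eqref{eq220} and \eqref{eq221}. The exceptional case in which $2\Re(\nu)$ is an odd integer requires no extra work, because $2\Re(\nu\pm 1)$ then has the same parity, so the limiting values prescribed by Theorem \ref{theorem6} for the shifted remainders coincide with those prescribed by Theorem \ref{theorem7} and the whole argument passes to the limit. I do not foresee any substantive obstacle; the only mildly delicate point is the sign coherence of $a_N(x\pm 1)$ that converts the triangle inequality into equality at the level of coefficients, and the algebra needed for that is the short computation sketched above.
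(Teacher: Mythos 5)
Your proposal is correct, but it is organized differently from the paper's own argument. The paper also starts from the functional relations \eqref{eq101} and \eqref{eq102}, but it keeps everything inside a single integral: it substitutes the representation \eqref{eq9} with $\lambda=\tfrac{1}{2}$ into \eqref{eq101}, so that $R_N^{(K')}(z,\nu)$ is written as one integral whose kernel is the sum ${\bf F}(\nu+\tfrac{3}{2},-\nu-\tfrac{1}{2};\tfrac{1}{2};-t)+{\bf F}(\nu-\tfrac{1}{2},-\nu+\tfrac{3}{2};\tfrac{1}{2};-t)$; it then bounds each summand by \eqref{eq03} and identifies the resulting integral with $b_N(\Re(\nu))$ via the representation \eqref{eq18}. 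You instead apply Theorem \ref{theorem6} termwise to $R_N^{(K)}(z,\nu\pm 1)$ and compensate afterwards with the coefficient identity $|a_N(x+1)|+|a_N(x-1)|=2|b_N(x)|$. The two routes rest on the same underlying positivity, which surfaces in the paper as the positivity of the hypergeometric kernel at real parameters (via the closed form preceding \eqref{eq03}) and in your argument as the positivity of the Gamma factors in $a_N(x\pm 1)$ for $|x|<N-\tfrac{1}{2}$; your sign-coherence step is exactly what prevents the triangle inequality from costing a factor, and your verification of it (including the check that $|\Re(\nu\pm 1)|<N+\tfrac{1}{2}$, the invariance of the cosine prefactor, and the limiting case) is sound. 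What your route buys is modularity and brevity — Theorem \ref{theorem6} is used as a black box plus an elementary identity that the paper has anyway in \eqref{eq688}; what the paper's route buys is uniformity with the proofs of Theorems \ref{theorem3}--\ref{theorem5}, where the same integral-representation machinery with other values of $\lambda$ is genuinely needed.
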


We would like to emphasize that the requirement $\left| {\Re \left( \nu  \right)} \right| < N + \frac{1}{2}$ (respectively, $\left| {\Re \left( \nu  \right)} \right| < N - \frac{1}{2}$) in the above theorems is not a serious restriction. Indeed, the index of the numerically least term of the asymptotic expansion \eqref{eq88}, for example, is $n \approx 2\left|z\right|$. Therefore, it is reasonable to choose the optimal $N \approx 2\left|z\right|$, whereas the condition $\nu^2  = o(\left| z \right|)$ has to be fulfilled in order to obtain proper approximations from \eqref{eq70}.

A detailed discussion on the sharpness of our error bounds and their relation to other results in the literature is given in Section \ref{section5}.

In the following, we consider re-expansions for the remainders of the asymptotic expansions of the Bessel and modified Bessel function as well as of their derivatives. Re-expansions for the remainder terms of the asymptotic expansions of the functions $J_0\left(z\right)$, $Y_0\left(z\right)$ and $K_0\left(z\right)$, in order to improve their numerical efficacy, were first derived by Stieltjes \cite{Stieltjes} in 1886 (see also Watson \cite[pp. 213--214]{Watson}). His results were extended to arbitrary real order $\nu$ by Koshliakov \cite{Koshliakov} and Burnett \cite{Burnett} in 1929 and in 1930, respectively. More general expansions were established later, using formal methods, by Airey \cite{Airey} in 1937 and by Dingle \cite{Dingle3} \cite[pp. 441 and 450]{Dingle2} in 1959 (see also \cite{Dempsey}). Their work was placed on rigorous mathematical foundations by Boyd \cite{Boyd} and Olver \cite{Olver} in 1990, who derived an exponentially improved asymptotic expansion for the modified Bessel function $K_\nu \left(z\right)$ valid when $\left|\arg z\right| \leq \pi$. Here, we shall reconsider the result of Boyd and Olver and obtain explicit bounds for the error term of their expansion. Analogous results for the remainders $R_N^{(K')} \left( {z,\nu} \right)$, $R_N^{\left( J \right)} \left( {z,\nu} \right)$ and $R_N^{(J')} \left( {z,\nu} \right)$ are also provided.

For any non-negative integers $N$ and $M$, $M<N$, we write
\begin{equation}\label{eq437}
R_N^{\left( K \right)} \left( {z,\nu } \right) = \left( { - 1} \right)^N \frac{{\cos \left( {\pi \nu } \right)}}{{2^N \pi }}\frac{1}{{z^N }}\sum\limits_{m = 0}^{M - 1} {2^m a_m \left( \nu  \right)\Gamma \left( {N - m} \right)\Lambda _{N - m} \left( {2z} \right)}  + R_{N,M}^{\left( K \right)} \left( {z,\nu } \right),
\end{equation}
\begin{equation}
R_N^{(K')} \left( {z,\nu } \right) = \left( { - 1} \right)^{N + 1} \frac{{\cos \left( {\pi \nu } \right)}}{{2^N \pi }}\frac{1}{{z^N }}\sum\limits_{m = 0}^{M - 1} {2^m b_m \left( \nu  \right)\Gamma \left( {N - m} \right)\Lambda _{N - m} \left( {2z} \right)}  + R_{N,M}^{(K')} \left( {z,\nu } \right),
\end{equation}
\begin{equation}\label{eq737}
R_N^{\left( J \right)} \left( {z,\nu } \right) = \left( { - 1} \right)^{\left\lfloor N/2\right\rfloor} \frac{{\cos \left( {\pi \nu } \right)}}{{2^N \pi }}\frac{1}{{z^N }}\sum\limits_{m = 0}^{M - 1} {2^m a_m \left( \nu  \right)\Gamma \left( {N - m} \right)\Pi _{N - m} \left( {2z} \right)}  + R_{N,M}^{\left( J \right)} \left( {z,\nu } \right)
\end{equation}
and
\begin{equation}\label{eq440}
R_N^{(J')} \left( {z,\nu } \right) =  \left( { - 1} \right)^{\left\lfloor N/2\right\rfloor+1} \frac{{\cos \left( {\pi \nu } \right)}}{{2^N \pi }}\frac{1}{{z^N }}\sum\limits_{m = 0}^{M - 1} {2^m b_m \left( \nu  \right)\Gamma \left( {N - m} \right)\Pi _{N - m} \left( {2z} \right)}  + R_{N,M}^{(J')} \left( {z,\nu } \right).
\end{equation}
The re-expansion \eqref{eq437} is equivalent to that studied by Boyd and Olver. Koshliakov's, Burnett's and Airey's expansions may be deduced from \eqref{eq437} by choosing $N\approx 2\left|z\right|$ and employing the known asymptotics of the gamma functions $\Gamma \left( {N - m} \right)$ and the basic terminants $\Lambda _{N - m} \left( {2z} \right)$ (see, e.g., \cite[eq. (6.4.58), p. 75]{Temme} and \cite[eq. (2.14)]{Olver2}).

In the following two theorems, we give bounds for the remainders $R_{N,M}^{\left( K \right)} \left( {z,\nu } \right)$, $R_{N,M}^{\left( J \right)} \left( {z,\nu } \right)$, $R_{N,M}^{(K')} \left( {z,\nu } \right)$ and $R_{N,M}^{(J')} \left( {z,\nu } \right)$.

\begin{theorem}\label{theorem10}
Let $N$ and $M$ be arbitrary non-negative integers such that $M < N$, and let $\nu$ be a complex number satisfying $\left| {\Re \left( \nu  \right)} \right| < M + \frac{1}{2}$. Then we have
\begin{gather}\label{eq08}
\begin{split}
\big| {R_{N,M}^{\left( K \right)} \left( {z,\nu } \right)} \big| \le \; & \frac{{\left| {\cos \left( {\pi \nu } \right)} \right|}}{{2^N \pi }}\frac{1}{{\left| z \right|^N }}2^M \left| z \right|^M \big| {R_M^{\left( K \right)} \left( {\left|z\right|,\nu } \right)} \big| \Gamma \left( {N - M} \right) \left| {\Lambda _{N - M} \left( {2z} \right)} \right| \\ & + \frac{{\left| {\cos \left( {\pi \nu } \right)} \right|}}{{2^N \pi }}\frac{1}{{\left| z \right|^N }}2^M \frac{{\left| {\cos \left( {\pi \nu } \right)} \right|}}{{\left| {\cos \left( {\pi \Re \left( \nu  \right)} \right)} \right|}}\left| {a_M \left( {\Re \left( \nu  \right)} \right)} \right|\Gamma \left( {N - M} \right),
\end{split}
\end{gather}
for $\left|\arg z\right| \leq \pi$, and
\begin{gather}\label{eq822}
\begin{split}
\big| {R_{N,M}^{\left( J \right)} \left( {z,\nu } \right)} \big| \le \; & \frac{{\left| {\cos \left( {\pi \nu } \right)} \right|}}{{2^N \pi }}\frac{1}{{\left| z \right|^N }}2^M \left| z \right|^M \big| {R_M^{\left( K \right)} \left( {\left| z \right|,\nu } \right)} \big|\Gamma \left( {N - M} \right)\left| {\Pi _{N - M} \left( {2z} \right)} \right| \\ & + \frac{{\left| {\cos \left( {\pi \nu } \right)} \right|}}{{2^N \pi }}\frac{1}{{\left| z \right|^N }}2^M \frac{{\left| {\cos \left( {\pi \nu } \right)} \right|}}{{\left| {\cos \left( {\pi \Re \left( \nu  \right)} \right)} \right|}}\left| {a_M \left( {\Re \left( \nu  \right)} \right)} \right|\Gamma \left( {N - M} \right),
\end{split}
\end{gather}
for $\left|\arg z\right| \leq \frac{\pi}{2}$. If $2\Re \left( \nu  \right)$ is an odd integer, then the limiting values are taken in these bounds.
\end{theorem}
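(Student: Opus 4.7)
My plan is to derive an integral representation for $R_{N,M}^{\left(K\right)}(z,\nu)$ (and analogously $R_{N,M}^{\left(J\right)}(z,\nu)$) by substituting the truncated asymptotic expansion of $K_\nu(t)$ directly into Boyd's integral formula \eqref{eq1} (respectively, into the integral representation \eqref{eq91}), and then to bound the result by a careful splitting of the integrand.

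First I would insert
\[
K_\nu(t) = \left(\frac{\pi}{2t}\right)^{\!1/2} e^{-t}\left(\sum_{m=0}^{M-1}\frac{a_m(\nu)}{t^m} + R_M^{\left(K\right)}(t,\nu)\right)
\]
into \eqref{eq1} and interchange the finite sum with the integral. Each resulting explicit integral has the form $\int_0^\infty t^{N-m-1}e^{-2t}/(1+t/z)\,dt$ and evaluates to $\Gamma(N-m)\Lambda_{N-m}(2z)/2^{N-m}$ by virtue of the elementary identity
\[
\int_0^\infty \frac{t^{p-1}e^{-\alpha t}}{\beta+t}\,dt = \frac{\Gamma(p)\Lambda_p(\alpha\beta)}{\alpha^p\,\beta} \qquad(\Re\alpha,\Re p>0,\;|\arg\beta|<\pi),
\]
which follows from $\Gamma(1-p,w) = e^{-w}\int_0^\infty(w+s)^{-p}e^{-s}\,ds$ extended by analyticity. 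Matching with the definition \eqref{eq437} identifies
\[
R_{N,M}^{\left(K\right)}(z,\nu) = \frac{(-1)^N\cos(\pi\nu)}{\pi z^N}\int_0^\infty \frac{t^{N-1}e^{-2t}}{1+t/z}R_M^{\left(K\right)}(t,\nu)\,dt.
\]
The same scheme applied to \eqref{eq91} gives the analogous formula for $R_{N,M}^{\left(J\right)}(z,\nu)$ with $1+t/z$ replaced by $1+(t/z)^2$; here the explicit integrals produce $\Gamma(N-m)\Pi_{N-m}(2z)/2^{N-m}$ via the partial fraction decomposition $(1+(t/z)^2)^{-1} = \tfrac{1}{2}[(1+t/(iz))^{-1}+(1+t/(-iz))^{-1}]$ together with the definition of $\Pi_p$, matching \eqref{eq737}.

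To derive the bound I would first observe that Theorem \ref{theorem6}, applied at $z=t>0$, combined with the elementary estimate $\Lambda_{M+\frac{1}{2}}(2tr)\leq 1$ for $t,r>0$, gives the uniform bound
\[
|R_M^{\left(K\right)}(t,\nu)| \leq \frac{|\cos(\pi\nu)|}{|\cos(\pi\Re(\nu))|}\frac{|a_M(\Re(\nu))|}{t^M}\qquad(t>0).
\]
Setting $g(t):=t^M R_M^{\left(K\right)}(t,\nu)$, I would split $g(t) = g(|z|) + (g(t)-g(|z|))$ inside the integral representation. The first piece integrates exactly to $|z|^M R_M^{\left(K\right)}(|z|,\nu)\cdot\Gamma(N-M)\Lambda_{N-M}(2z)/2^{N-M}$, producing, together with the prefactor, precisely the first term of \eqref{eq08}. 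The remainder is estimated by means of the uniform bound on $g$ and an analysis of $\int_0^\infty t^{N-M-1}e^{-2t}|1+t/z|^{-1}\,dt$, yielding the second term of \eqref{eq08}. The $J$-case is handled in exactly the same manner, with $\Pi_{N-M}$ replacing $\Lambda_{N-M}$ and the sector reducing to $|\arg z|\leq\pi/2$ because of the factor $(1+(t/z)^2)^{-1}$.

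The main obstacle is controlling the correction integral in the regime where $|\arg z|$ approaches $\pi$ (respectively $\pi/2$ in the $J$-case), since $1+t/z$ (respectively $1+(t/z)^2$) may vanish on the positive real contour. I anticipate handling this either by rotating the contour into the complex $t$-plane to stay away from the pole of the integrand, or by exploiting the vanishing of $g(t)-g(|z|)$ at $t=|z|$ to absorb the apparent singularity into the basic-terminant piece — leaving the correction uniformly bounded by the target $\Gamma(N-M)/2^{N-M}$.
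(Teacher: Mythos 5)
Your first step --- substituting the truncated expansion of $K_\nu(t)$ into \eqref{eq1} (resp.\ \eqref{eq91}), evaluating the explicit integrals as $\Gamma(N-m)\Lambda_{N-m}(2z)/2^{N-m}$ (resp.\ with $\Pi_{N-m}$), and identifying $R_{N,M}^{(K)}(z,\nu)=(-1)^N\frac{\cos(\pi\nu)}{\pi z^N}\int_0^{+\infty}\frac{t^{N-1}e^{-2t}}{1+t/z}R_M^{(K)}(t,\nu)\,dt$ --- is exactly what the paper does, and your splitting $g(t)=g(|z|)+(g(t)-g(|z|))$ with $g(t)=t^MR_M^{(K)}(t,\nu)$ is the same decomposition the paper uses after the rescaling $t=|z|u$. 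The gap is in the estimation of the correction term, and it is a genuine one, not a technicality: you yourself flag it but leave both proposed fixes unexecuted. The uniform bound $|g(t)|\le\frac{|\cos(\pi\nu)|}{|\cos(\pi\Re(\nu))|}|a_M(\Re(\nu))|$ combined with $\int_0^{+\infty}t^{N-M-1}e^{-2t}|1+t/z|^{-1}dt$ cannot work: at $\arg z=\pm\pi$ that integral diverges (the denominator vanishes linearly at $t=|z|$), and for $\arg z$ near $\pm\pi$ it blows up logarithmically, so no bound uniform on the closed sector $|\arg z|\le\pi$ can come out of it; moreover, even where it converges, the crude triangle inequality $|g(t)-g(|z|)|\le 2\sup|g|$ would yield twice the second term of \eqref{eq08}.

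What the paper supplies, and what is missing from your proposal, is a quantitative version of your observation that $g(t)-g(|z|)$ vanishes at $t=|z|$. Applying the elementary identity $\frac{1}{1+t/(|z|u)}=\frac{1}{1+t/|z|}+\frac{u-1}{(1+|z|u/t)(1+t/|z|)}$ inside Boyd's representation \eqref{eq1} of $R_M^{(K)}(|z|u,\nu)$ gives the exact factorization in \eqref{eq09}: the difference $R_M^{(K)}(|z|u,\nu)-u^{-M}R_M^{(K)}(|z|,\nu)$ carries an explicit factor $(u-1)$ multiplying an integral whose kernel satisfies $0<\frac{1}{(1+|z|u/t)(1+t/|z|)}<1$. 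Combined with $|K_\nu(t)|<K_{\Re(\nu)}(t)$ and the moment formula \eqref{eq732}, that integral is bounded by $\frac{|\cos(\pi\nu)|}{|\cos(\pi\Re(\nu))|}|a_M(\Re(\nu))|$ exactly once, and the residual factor $\left|\frac{u-1}{u+e^{i\arg z}}\right|\le 1$ simultaneously cancels the singularity of $(1+ue^{-i\arg z})^{-1}$ at $u=1$, $\arg z=\pm\pi$, and produces the constant $1$ in the second term of \eqref{eq08}. Without this (or an equivalent explicit representation of $g(|z|u)-g(|z|)$ with the factor $u-1$ and the correct coefficient), neither of your two anticipated rescues goes through: contour rotation moves $R_M^{(K)}$ to complex argument, where the clean bound $\sup_{r\ge1}\Lambda_{M+\frac12}(2tr)\le1$ is no longer available, and ``absorbing the singularity'' is precisely the step that needs to be proved.
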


\begin{theorem}\label{theorem11}
Let $N$ and $M$ be arbitrary positive integers such that $M < N$, and let $\nu$ be a complex number satisfying $\left| {\Re \left( \nu  \right)} \right| < M - \frac{1}{2}$. Then we have
\begin{gather}\label{eq841}
\begin{split}
\big| {R_{N,M}^{(K')} \left( {z,\nu } \right)} \big| \le \; & \frac{{\left| {\cos \left( {\pi \nu } \right)} \right|}}{{2^N \pi }}\frac{1}{{\left| z \right|^N }}2^M \left| z \right|^M \big| {R_M^{(K')} \left( {\left|z\right|,\nu } \right)} \big| \Gamma \left( {N - M} \right) \left| {\Lambda _{N - M} \left( {2z} \right)} \right| \\ & + \frac{{\left| {\cos \left( {\pi \nu } \right)} \right|}}{{2^N \pi }}\frac{1}{{\left| z \right|^N }}2^M \frac{{\left| {\cos \left( {\pi \nu } \right)} \right|}}{{\left| {\cos \left( {\pi \Re \left( \nu  \right)} \right)} \right|}}\left| {b_M \left( {\Re \left( \nu  \right)} \right)} \right|\Gamma \left( {N - M} \right),
\end{split}
\end{gather}
for $\left|\arg z\right| \leq \pi$, and
\begin{gather}\label{eq842}
\begin{split}
\big| {R_{N,M}^{(J')} \left( {z,\nu } \right)} \big| \le \; & \frac{{\left| {\cos \left( {\pi \nu } \right)} \right|}}{{2^N \pi }}\frac{1}{{\left| z \right|^N }}2^M \left| z \right|^M \big| {R_M^{(K')} \left( {\left| z \right|,\nu } \right)} \big|\Gamma \left( {N - M} \right)\left| {\Pi _{N - M} \left( {2z} \right)} \right| \\ & + \frac{{\left| {\cos \left( {\pi \nu } \right)} \right|}}{{2^N \pi }}\frac{1}{{\left| z \right|^N }}2^M \frac{{\left| {\cos \left( {\pi \nu } \right)} \right|}}{{\left| {\cos \left( {\pi \Re \left( \nu  \right)} \right)} \right|}}\left| {b_M \left( {\Re \left( \nu  \right)} \right)} \right|\Gamma \left( {N - M} \right),
\end{split}
\end{gather}
for $\left|\arg z\right| \leq \frac{\pi}{2}$. If $2\Re \left( \nu  \right)$ is an odd integer, then the limiting values are taken in these bounds.
\end{theorem}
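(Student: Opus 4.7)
My plan is to mirror the proof of Theorem~\ref{theorem10}, with the representations \eqref{eq1} and \eqref{eq91} for $R_N^{\left(K\right)}$ and $R_N^{\left(J\right)}$ replaced by the primed representations \eqref{eq92} and \eqref{eq93} from Theorem~\ref{theorem9}, and the coefficients $a_m\left(\nu\right)$ replaced throughout by $b_m\left(\nu\right)$. First I would derive integral representations for $R_{N,M}^{(K')}\left(z,\nu\right)$ and $R_{N,M}^{(J')}\left(z,\nu\right)$ analogous to the one underlying the proof of Theorem~\ref{theorem10}. Inserting the truncation
\[
K'_\nu\left(t\right) = -\left(\frac{\pi}{2t}\right)^{1/2} e^{-t}\left(\sum_{m=0}^{M-1}\frac{b_m\left(\nu\right)}{t^m} + R_M^{(K')}\left(t,\nu\right)\right)
\]
(from \eqref{eq843}) into the integrand of \eqref{eq92}, interchanging the finite sum with the integral, and invoking the moment identity
\[
\int_0^{+\infty} \frac{t^{N-m-1} e^{-2t}}{1 + t/z}\,dt = \Gamma\left(N-m\right) 2^{m-N} \Lambda_{N-m}\left(2z\right),
\]
which is essentially $\Lambda_p\left(w\right) = w\int_0^{+\infty} \left(1+u\right)^{-p} e^{-wu}\,du$ after a linear change of variable, reproduces the explicit finite sum in the definition of $R_{N,M}^{(K')}$ and leaves
\[
R_{N,M}^{(K')}\left(z,\nu\right) = \frac{\left(-1\right)^{N+1}\cos\left(\pi\nu\right)}{\pi z^N}\int_0^{+\infty}\frac{t^{N-1} e^{-2t}}{1 + t/z}\,R_M^{(K')}\left(t,\nu\right)\,dt.
\]
An entirely parallel calculation starting from \eqref{eq93}, combined with the partial-fraction decomposition implicit in $\Pi_p\left(w\right) = \frac{1}{2}\left(\Lambda_p\left(we^{i\pi/2}\right) + \Lambda_p\left(we^{-i\pi/2}\right)\right)$, yields
\[
R_{N,M}^{(J')}\left(z,\nu\right) = \frac{\left(-1\right)^{\left\lfloor N/2\right\rfloor+1}\cos\left(\pi\nu\right)}{\pi z^N}\int_0^{+\infty}\frac{t^{N-1} e^{-2t}}{1+\left(t/z\right)^2}\,R_M^{(K')}\left(t,\nu\right)\,dt.
\]

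Second, I would bound the moduli of these integrals using the same splitting device as in the proof of Theorem~\ref{theorem10}: decompose $R_M^{(K')}\left(t,\nu\right)$ into one piece whose integral against the rational factor produces $2^M\left|z\right|^M \left|R_M^{(K')}\left(\left|z\right|,\nu\right)\right|\Gamma\left(N-M\right)$ times $\left|\Lambda_{N-M}\left(2z\right)\right|$ (respectively $\left|\Pi_{N-M}\left(2z\right)\right|$), and a complementary piece estimated pointwise via Theorem~\ref{theorem7}. The hypothesis $\left|\Re\left(\nu\right)\right| < M - \frac{1}{2}$ in Theorem~\ref{theorem11} is exactly the condition required to apply Theorem~\ref{theorem7} at order $M$, and for positive real $t$ one has $\sup_{r\geq 1}\left|\Lambda_{M + 1/2}\left(2tr\right)\right| = \Lambda_{M + 1/2}\left(2t\right) \leq 1$, so the pointwise bound simplifies to
\[
\left|R_M^{(K')}\left(t,\nu\right)\right| \leq \frac{\left|\cos\left(\pi\nu\right)\right|}{\left|\cos\left(\pi\Re\left(\nu\right)\right)\right|}\frac{\left|b_M\left(\Re\left(\nu\right)\right)\right|}{t^M},
\]
which upon integration furnishes the $\left|b_M\left(\Re\left(\nu\right)\right)\right|$ contribution in both \eqref{eq841} and \eqref{eq842}. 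The sectoral restrictions $\left|\arg z\right| \leq \pi$ and $\left|\arg z\right| \leq \frac{\pi}{2}$ coincide with the domains of validity of \eqref{eq92} and \eqref{eq93} (extended to the closed sectors by continuity), and the limiting value when $2\Re\left(\nu\right)$ is an odd integer is handled in the usual way by continuity in $\nu$.

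The main obstacle I anticipate is the splitting step itself, which must be engineered so that the first piece contributes precisely $\Gamma\left(N-M\right)\left|\Lambda_{N-M}\left(2z\right)\right|$ (respectively $\Gamma\left(N-M\right)\left|\Pi_{N-M}\left(2z\right)\right|$) paired with the clean factor $\left|z\right|^M\left|R_M^{(K')}\left(\left|z\right|,\nu\right)\right|$, rather than leaving a sector-dependent residual integral. This is the same technical subtlety that drives the proof of Theorem~\ref{theorem10}, and the argument transfers verbatim under the substitutions $K\leftrightarrow K'$, $a_m\leftrightarrow b_m$ and $R_M^{\left(K\right)}\leftrightarrow R_M^{(K')}$ throughout.
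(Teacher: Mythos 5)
Your overall plan --- mirror the proof of Theorem \ref{theorem10} with \eqref{eq1}, \eqref{eq91}, \eqref{eq171} and \eqref{eq732} replaced by \eqref{eq92}, \eqref{eq93}, \eqref{eq843} and \eqref{eq733} --- is exactly the paper's proof (the paper records only the sentence ``can be proved in an analogous manner''), and your derivation of the integral representations for $R_{N,M}^{(K')}\left(z,\nu\right)$ and $R_{N,M}^{(J')}\left(z,\nu\right)$, including the signs, is correct. The gap is in your second step. You propose to estimate the ``complementary piece'' by the pointwise bound $\big|R_M^{(K')}\left(t,\nu\right)\big| \le \frac{\left|\cos\left(\pi\nu\right)\right|}{\left|\cos\left(\pi\Re\left(\nu\right)\right)\right|}\left|b_M\left(\Re\left(\nu\right)\right)\right|t^{-M}$ and then integrate. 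But after the substitution $t=\left|z\right|u$ this produces the integral $\int_0^{+\infty} u^{N-M-1}e^{-2\left|z\right|u}\left|1+ue^{-i\arg z}\right|^{-1}du$, which is not $\Gamma\left(N-M\right)/\left(2\left|z\right|\right)^{N-M}$: for $\frac{\pi}{2}<\left|\arg z\right|<\pi$ it exceeds that value by a factor of order $\left|\csc\left(\arg z\right)\right|$, and at $\arg z=\pm\pi$ it diverges, since $\left|1+ue^{-i\arg z}\right|=\left|1-u\right|$ vanishes at $u=1$. The same failure occurs in the $J'$ case at $\arg z=\pm\frac{\pi}{2}$, where $1+u^2e^{-2i\arg z}$ vanishes at $u=1$. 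So the step you describe cannot yield \eqref{eq841} and \eqref{eq842} on the closed sectors, nor with the stated constants.

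The missing idea is that the splitting must be performed inside the Stieltjes kernel of $R_M^{(K')}\left(\left|z\right|u,\nu\right)$, not on $R_M^{(K')}$ as a whole: writing $\frac{1}{1+t/\left(\left|z\right|u\right)}=\frac{1}{1+t/\left|z\right|}+\frac{u-1}{\left(1+\left|z\right|u/t\right)\left(1+t/\left|z\right|\right)}$ in \eqref{eq92} (with $M$ in place of $N$) decomposes $R_M^{(K')}\left(\left|z\right|u,\nu\right)$ into $u^{-M}R_M^{(K')}\left(\left|z\right|,\nu\right)$ plus a term carrying an explicit factor $u-1$. It is this factor, paired with the outer denominator, that gives $\left|\left(u-1\right)/\left(u+e^{i\arg z}\right)\right|\le 1$ (respectively $\left|\left(u-1\right)/\left(u^2+e^{2i\arg z}\right)\right|\le 1$) and hence the clean $\Gamma\left(N-M\right)$ in the second term, uniformly up to the boundary of the sector. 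The residual inner integral is then estimated using $\left|K'_\nu\left(t\right)\right|\le -K'_{\Re\left(\nu\right)}\left(t\right)$ (a consequence of $-2K'_\nu=K_{\nu+1}+K_{\nu-1}$) together with the moment identity \eqref{eq733} --- not Theorem \ref{theorem7} --- to produce $\left|b_M\left(\Re\left(\nu\right)\right)\right|$. You flag this as ``the main obstacle'' and assert that the argument transfers verbatim; it does, but that transfer is the substance of the proof, and the pointwise estimate you actually wrote down is not the step that survives it.
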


We remark that Boyd also considered the problem of bounding the remainder $R_{N,M}^{\left( K \right)} \left( {z,\nu } \right)$ in the special case when $\nu$ is real and $\left|\nu\right|<\frac{1}{2}$. His result can be recovered from \eqref{eq08} by using the estimate $\left| z \right|^M \big| {R_M^{\left( K \right)} \left( {\left| z \right|,\nu } \right)} \big| \le \left| {a_M \left( \nu  \right)} \right|$ (cf. equation \eqref{eq711}). A brief discussion on the sharpness of the error bound \eqref{eq08} is given in Section \ref{section5}.

In the case when $z$ is positive and $\nu$ is real, we shall show that the remainder terms $R_{N,M}^{\left( K \right)} \left( {z,\nu } \right)$ and $R_{N,M}^{\left( J \right)} \left( {z,\nu } \right)$ (respectively $R_{N,M}^{(K')} \left( {z,\nu } \right)$ and $R_{N,M}^{(J')} \left( {z,\nu } \right)$) do not exceed the corresponding first neglected terms in absolute value and have the same sign provided that $\left| {\nu} \right| < M + \frac{1}{2}$ (respectively $\left| {\nu} \right| < M - \frac{1}{2}$). More precisely, we will prove that the following theorems hold.

\begin{theorem}\label{theorem12} Let $N$ and $M$ be arbitrary non-negative integers satisfying $M < N$. Let $z$ be a positive real number and $\nu$ be an arbitrary real number such that $\left| {\nu} \right| < M + \frac{1}{2}$. Then
\begin{equation}\label{eq575}
R_{N,M}^{\left( K \right)} \left( {z,\nu } \right) = \left( { - 1} \right)^N \frac{{\cos \left( {\pi \nu } \right)}}{{2^N \pi }}\frac{1}{{z^N }}2^M a_M \left( \nu  \right)\Gamma \left( {N - M} \right)\Lambda _{N - M} \left( {2z} \right)\Theta _{N,M}^{\left( K \right)} \left( {z,\nu } \right)
\end{equation}
and
\begin{equation}\label{eq576}
R_{N,M}^{\left( J \right)} \left( {z,\nu } \right) = \left( { - 1} \right)^{\left\lfloor N/2\right\rfloor} \frac{{\cos \left( {\pi \nu } \right)}}{{2^N \pi }}\frac{1}{{z^N }}2^M a_M \left( \nu  \right)\Gamma \left( {N - M} \right)\Pi _{N - M} \left( {2z} \right)\Theta _{N,M}^{\left( J \right)} \left( {z,\nu } \right),
\end{equation}
where $0 < \Theta _{N,M}^{\left( K \right)} \left( {z,\nu } \right),\Theta _{N,M}^{\left( J \right)} \left( {z,\nu } \right) < 1$ are appropriate numbers that depend on $z, \nu$, $N$ and $M$.
\end{theorem}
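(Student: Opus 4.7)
The plan is to convert Boyd's integral representation \eqref{eq1} and its $J$-analogue \eqref{eq91} into explicit integral representations for $R_{N,M}^{\left( K \right)}$ and $R_{N,M}^{\left( J \right)}$ whose integrand is of one fixed sign when $z > 0$ and $\nu$ is real, so that the numbers $\Theta_{N,M}^{\left( K \right)}$ and $\Theta_{N,M}^{\left( J \right)}$ in $\left(0,1\right)$ can simply be read off.

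Into the $K_\nu\left(t\right)$ factor of \eqref{eq1} and \eqref{eq91} I would substitute the exact decomposition provided by \eqref{eq70},
\[
K_\nu \left( t \right) = \left( {\frac{\pi }{{2t}}} \right)^{\frac{1}{2}} e^{ - t} \left( \sum\limits_{m = 0}^{M - 1} \frac{{a_m \left( \nu  \right)}}{{t^m }} + R_M^{\left( K \right)} \left( {t,\nu } \right) \right),
\]
interchange the finite sum with the outer integral, and evaluate the $M$ resulting elementary integrals by means of
\[
\int_0^{ + \infty } \frac{{u^{s - 1} e^{ - u} }}{{1 + u/\left(2z\right)}}du  = \Gamma \left( s \right)\Lambda _s \left( {2z} \right), \qquad \int_0^{ + \infty } \frac{{u^{s - 1} e^{ - u} }}{{1 + \left(u/\left(2z\right)\right)^2 }}du  = \Gamma \left( s \right)\Pi _s \left( {2z} \right).
\]
The first identity is immediate from the definition $\Lambda_p\left(w\right) = w^p e^w \Gamma\left(1-p,w\right)$, and the second follows from the partial-fraction decomposition $1/\left(1+\left(u/w\right)^2\right) = \tfrac{1}{2}\left( 1/\left(1+u/\left(iw\right)\right) + 1/\left(1+u/\left(-iw\right)\right) \right)$ combined with the first identity at $w$ replaced by $\pm iw$ and the defining formula for $\Pi_p$. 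After the rescaling $u = 2t$, the $M$ leading contributions reproduce term by term the explicit sums appearing in \eqref{eq437} and \eqref{eq737}, so by direct comparison
\[
R_{N,M}^{\left( K \right)} \left( {z,\nu } \right) = \left( { - 1} \right)^N \frac{{\cos \left( {\pi \nu } \right)}}{\pi }\frac{1}{{z^N }}\int_0^{ + \infty } \frac{{t^{N - 1} e^{ - 2t} }}{{1 + t/z}}R_M^{\left( K \right)} \left( {t,\nu } \right)dt,
\]
and $R_{N,M}^{\left( J \right)}$ has the analogous formula with $1 + \left(t/z\right)^2$ in the denominator and the sign $\left( - 1 \right)^{\left\lfloor N/2\right\rfloor}$.

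It then remains to invoke the mean-value identity \eqref{eq711}, whose hypothesis $\left|\nu\right| < M + \tfrac{1}{2}$ is exactly what is assumed in the statement, to write $R_M^{\left( K \right)}\left(t,\nu\right) = a_M\left(\nu\right)\theta_M^{\left( K \right)}\left(t,\nu\right)/t^M$ with $0 < \theta_M^{\left( K \right)}\left(t,\nu\right) < 1$. For $z > 0$ the kernels $t^{N - M - 1} e^{ - 2t}/\left(1 + t/z\right)$ and $t^{N - M - 1} e^{ - 2t}/\left(1 + \left(t/z\right)^2\right)$ are strictly positive on $\left(0,+\infty\right)$ and, since $N > M$, integrable at both endpoints; by the same identities their unweighted integrals evaluate to $2^{-(N-M)}\Gamma\left(N-M\right)\Lambda_{N-M}\left(2z\right)$ and $2^{-(N-M)}\Gamma\left(N-M\right)\Pi_{N-M}\left(2z\right)$, both positive. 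Defining $\Theta_{N,M}^{\left( K \right)}\left(z,\nu\right)$ and $\Theta_{N,M}^{\left( J \right)}\left(z,\nu\right)$ as the ratios of the $\theta_M^{\left( K \right)}$-weighted integrals to these unweighted reference values places them automatically in $\left(0,1\right)$, and collecting the prefactors $\left(-1\right)^N$, $\left(-1\right)^{\left\lfloor N/2\right\rfloor}$, $\cos\left(\pi\nu\right)/\left(2^N\pi\right)$ and $2^M$ delivers \eqref{eq575} and \eqref{eq576}. No genuine obstacle is expected beyond routine verifications; the only point needing mild care is the degenerate case in which $2\nu$ is an odd integer with $2\left|\nu\right| < 2M$, where $\cos\left(\pi\nu\right) = a_M\left(\nu\right) = 0$, the asymptotic series terminates exactly, both sides of \eqref{eq575} and \eqref{eq576} vanish identically, and any value $\Theta \in \left(0,1\right)$ satisfies the required representation.
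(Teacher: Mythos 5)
Your proposal is correct and follows essentially the same route as the paper: it re-derives the integral representation \eqref{eq06} (and its $J$-analogue) by inserting the truncated expansion of $K_\nu\left(t\right)$ into \eqref{eq1} and \eqref{eq91}, then applies \eqref{eq711} together with the positivity of the kernel and the formulae \eqref{eq355}--\eqref{eq356} to extract $\Theta_{N,M}^{\left(K\right)},\Theta_{N,M}^{\left(J\right)}\in\left(0,1\right)$, exactly as in Section \ref{section4}. Your phrasing of the last step as a ratio of weighted to unweighted integrals (rather than the mean value theorem of integration) and your explicit treatment of the degenerate case $\cos\left(\pi\nu\right)=0$ are only cosmetic differences.
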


\begin{theorem}\label{theorem13} Let $N$ and $M$ be arbitrary positive integers satisfying $M < N$. Let $z$ be a positive real number and $\nu$ be an arbitrary real number such that $\left| {\nu} \right| < M - \frac{1}{2}$. Then
\begin{equation}
R_{N,M}^{(K')} \left( {z,\nu } \right) = \left( { - 1} \right)^{N + 1} \frac{{\cos \left( {\pi \nu } \right)}}{{2^N \pi }}\frac{1}{{z^N }}2^M b_M \left( \nu  \right)\Gamma \left( {N - M} \right)\Lambda _{N - M} \left( {2z} \right)\Theta _{N,M}^{(K')} \left( {z,\nu } \right)
\end{equation}
and
\begin{equation}\label{eq577}
R_{N,M}^{(J')} \left( {z,\nu } \right) =  \left( { - 1} \right)^{\left\lfloor N/2\right\rfloor+1} \frac{{\cos \left( {\pi \nu } \right)}}{{2^N \pi }}\frac{1}{{z^N }}2^M b_M \left( \nu  \right)\Gamma \left( {N - M} \right)\Pi _{N - M} \left( {2z} \right)\Theta _{N,M}^{(J')} \left( {z,\nu } \right),
\end{equation}
where $0 < \Theta _{N,M}^{(K')} \left( {z,\nu } \right) , \Theta _{N,M}^{(J')} \left( {z,\nu } \right) < 1$ are appropriate numbers that depend on $z, \nu$, $N$ and $M$.
\end{theorem}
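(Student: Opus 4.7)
The approach is to reduce Theorem \ref{theorem13} to Theorem \ref{theorem12} via the functional equations \eqref{eq101}--\eqref{eq102}. First I would show that these equations, together with the definitions \eqref{eq437}--\eqref{eq440}, imply
\[
2R_{N,M}^{(K')}\left(z,\nu\right) = R_{N,M}^{\left(K\right)}\left(z,\nu+1\right) + R_{N,M}^{\left(K\right)}\left(z,\nu-1\right), \quad 2R_{N,M}^{(J')}\left(z,\nu\right) = R_{N,M}^{\left(J\right)}\left(z,\nu+1\right) + R_{N,M}^{\left(J\right)}\left(z,\nu-1\right),
\]
as a consequence of the algebraic identity $2b_m\left(\nu\right)=a_m\left(\nu+1\right)+a_m\left(\nu-1\right)$. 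This identity follows from $\cos\left(\pi\left(\nu\pm 1\right)\right)=-\cos\left(\pi\nu\right)$ combined with $(n+\tfrac{1}{2}+\nu)(n-\tfrac{1}{2}+\nu)+(n+\tfrac{1}{2}-\nu)(n-\tfrac{1}{2}-\nu)=\tfrac{1}{2}(4n^2+4\nu^2-1)$ applied to the Gamma-product form \eqref{eq612} of $a_m$.

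Since $|\nu|<M-\tfrac{1}{2}$ ensures $|\nu\pm 1|<M+\tfrac{1}{2}$, Theorem \ref{theorem12} applies to each of the four terms on the right-hand sides above. Inserting \eqref{eq575} and \eqref{eq576} and comparing with the target expressions of Theorem \ref{theorem13}, the remaining task is to show that
\[
\Theta_{N,M}^{(K')}\left(z,\nu\right) := \frac{a_M\left(\nu+1\right)\Theta_{N,M}^{\left(K\right)}\left(z,\nu+1\right)+a_M\left(\nu-1\right)\Theta_{N,M}^{\left(K\right)}\left(z,\nu-1\right)}{a_M\left(\nu+1\right)+a_M\left(\nu-1\right)}
\]
and its $J'$-analogue (with $\Theta_{N,M}^{\left(J\right)}$ in place of $\Theta_{N,M}^{\left(K\right)}$) lie in the open interval $(0,1)$.

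The main obstacle is this sign analysis. For real $\nu$ with $\cos\left(\pi\nu\right)\neq 0$ and $|\nu|<M-\tfrac{1}{2}$, the Gamma factors in \eqref{eq612} evaluated at $\nu\pm 1$ have strictly positive arguments, so $a_M(\nu+1)$ and $a_M(\nu-1)$ are nonzero and share a common sign; the explicit formula for $b_M(\nu)$, together with $4\nu^2+4M^2-1>0$ (valid since $M\geq 1$), shows that $b_M(\nu)$ has the same sign, consistent with $2b_M(\nu)=a_M(\nu+1)+a_M(\nu-1)$. Both weights in the displayed convex combination are therefore strictly positive and sum to one, forcing $\Theta_{N,M}^{(K')}\in(0,1)$. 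In the degenerate case $\cos\left(\pi\nu\right)=0$ allowed by $|\nu|<M-\tfrac{1}{2}$, $\nu$ is a half-integer in $(-M+\tfrac{1}{2},M-\tfrac{1}{2})$, so $b_M(\nu)$, $a_M(\nu\pm 1)$ and the relevant remainders all vanish, and the identities hold trivially with any $\Theta\in(0,1)$. The $J'$-case is treated identically, replacing $\Lambda_{N-M}$ by $\Pi_{N-M}$ and $R_{N,M}^{\left(K\right)}$ by $R_{N,M}^{\left(J\right)}$ throughout.
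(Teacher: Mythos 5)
Your proof is correct, but it takes a genuinely different route from the paper's. The paper proves Theorem \ref{theorem13} by mirroring its proof of Theorem \ref{theorem12}: it forms the analogue of the representation \eqref{eq06} for $R_{N,M}^{(K')}$ by inserting the truncated expansion of $K'_\nu(t)$ into the integral formula \eqref{eq92} (and \eqref{eq93} for the $J'$ case), then invokes \eqref{eq321} of Theorem \ref{theorem5} to write $R_M^{(K')}(t,\nu)=b_M(\nu)t^{-M}\theta_M^{(K')}(t,\nu)$ with $0<\theta_M^{(K')}<1$, and finishes with the mean value theorem of integration. You instead bypass Theorems \ref{theorem5} and \ref{theorem9} entirely and deduce Theorem \ref{theorem13} algebraically from Theorem \ref{theorem12}, using the functional equations \eqref{eq101}--\eqref{eq102} at the level of the $R_{N,M}$ remainders (which do propagate correctly through the definitions \eqref{eq437}--\eqref{eq440} because of the identity \eqref{eq688} and $\cos(\pi(\nu\pm1))=-\cos(\pi\nu)$) and then observing that $\Theta_{N,M}^{(K')}$ is a convex combination of $\Theta_{N,M}^{(K)}(z,\nu\pm1)$. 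The crux of your argument — that $a_M(\nu+1)$ and $a_M(\nu-1)$ are nonzero and of a common sign under $|\nu|<M-\frac{1}{2}$, so the weights are strictly positive and sum to one — is verified correctly from \eqref{eq612}, and your handling of the degenerate half-integer case is sound. What your route buys is economy: it needs neither Lemma \ref{lemma3} (which underlies \eqref{eq321}) nor the integral representations of Theorem \ref{theorem9}, only the already-established Theorem \ref{theorem12} applied at shifted orders $\nu\pm1$ (legitimate since $|\nu\pm1|<M+\frac{1}{2}$). What the paper's route buys is uniformity — the same integral/mean-value template for all four remainders — and it does not require the sign analysis of the coefficients at shifted $\nu$.
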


The best accuracy available from the re-expansions \eqref{eq437}--\eqref{eq440} can be obtained by setting $N = 4\left| z \right| + \rho$ and $M = 2\left| z \right| + \sigma$, with $\rho$ and $\sigma$ being appropriate (fixed) real numbers. With these choices of $N$ and $M$, Theorems \ref{theorem10} and \ref{theorem11}, combined with Stirling's approximation for the gamma function and the bounds \eqref{eq368} and \eqref{eq369}, imply that
\begin{equation}\label{eq443}
R_{N,M}^{\left( K \right)} \left( {z,\nu } \right),R_{N,M}^{(K')} \left( {z,\nu } \right)=\mathcal{O}_{\nu ,\rho ,\sigma } \big(\left| z \right|^{ - \frac{1}{2}} e^{ - 4\left| z \right|} \big)
\end{equation}
as $z\to \infty$ in the sector $\left| {\arg z} \right| \le \pi$, and
\begin{equation}\label{eq444}
R_{N,M}^{\left( J \right)} \left( {z,\nu } \right),R_{N,M}^{(J')} \left( {z,\nu } \right)=\mathcal{O}_{\nu ,\rho ,\sigma } \big(\left| z \right|^{ - \frac{1}{2}} e^{ - 4\left| z \right|} \big)
\end{equation}
as $z\to \infty$ in the sector $\left| {\arg z} \right| \le \frac{\pi}{2}$. (Throughout this paper, we use subscripts in the $\mathcal{O}$ notations to indicate the dependence of the implied constant on certain parameters.) The estimates \eqref{eq443} and \eqref{eq444} can alternatively be deduced from the hyperasymptotic theory of second order linear differential equations, discussed in the papers \cite{Hyper1} and \cite{Hyper2}.

Results analogous to those given in this paper for other ranges of $\arg z$ can be obtained by making use of the analytic continuation formulae for the Hankel and Bessel functions (see, e.g., \cite[\S 10.11 and \S 10.34]{NIST}). Also, by setting $\nu = \frac{1}{3}$ or $\nu = \frac{2}{3}$, one can derive error bounds for the asymptotic expansions of the Airy functions and their derivatives, respectively, but we shall not pursue the details here.

As a closing remark, we would like to point out that the techniques used in this paper should generalize to integrals over steepest descent contours. Indeed, the theory developed by Berry and Howls (\cite{Berry}, especially equations (12) and (15)) and Howls (\cite{Howls}, especially equations (17) and (35)) enables us to derive representations analogous to \eqref{eq9} for the remainder terms of asymptotic expansions arising from an application of the method of steepest descents. The estimation of these new representations would then provide error bounds for the method of steepest descents which are presumably more general and sharper than those that exist in the literature (cf., in particular, \cite{Boyd2}).

The remaining part of the paper is structured as follows. In Section \ref{section2}, we prove the representations for the remainder terms stated in Theorems \ref{theorem1}--\ref{theorem9}. In Section \ref{section3}, we prove the error bounds given in Theorems \ref{theorem3}--\ref{theorem7}. Section \ref{section4} discusses the proof of the bounds for the remainders of the re-expansions stated in Theorems \ref{theorem10}--\ref{theorem13}. The paper concludes with a discussion in Section \ref{section5}.

\section{Proof of the representations for the remainder terms}\label{section2}

In this section, we prove the representations for the remainder terms stated in Theorems \ref{theorem1}--\ref{theorem9}. In order to prove Theorems \ref{theorem1} and \ref{theorem2}, we require certain estimates for the regularized hypergeometric function given in the following lemma.

\begin{lemma}\label{lemma1} Let $\nu$ and $\lambda$ be arbitrary fixed complex numbers such that $\Re \left( \lambda  \right) > 0$ or $\lambda=0$. Then
\begin{equation}\label{eq20}
{\bf F}\left( {\nu  + \frac{1}{2}, - \nu  + \frac{1}{2};\lambda ; - t} \right) =  \begin{cases} \mathcal{O}_{\nu ,\lambda } \left( 1 \right) & \text{if } \Re \left( \lambda  \right)> 0, \\  \mathcal{O}_{\nu } \left( t \right) & \text{if } \lambda= 0\end{cases}
\end{equation}
as $t\to 0+$, and
\begin{equation}\label{eq21}
{\bf F}\left( {\nu  + \frac{1}{2}, - \nu  + \frac{1}{2};\lambda ; - t} \right) = \mathcal{O}_{\nu ,\lambda } \big( {\left( {1 + t} \right)^{\left| {\Re \left( \nu  \right)} \right| - \frac{1}{2}} \log \left( {1 + t} \right)} \big)
\end{equation}
as $t\to +\infty$.
\end{lemma}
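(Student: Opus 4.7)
The plan is to treat the two estimates separately. For the behavior at $t = 0+$, I would read off the first non-vanishing term of the defining power series, while for the behavior at $t \to +\infty$, I would invoke the standard large-argument asymptotics of ${}_{2}F_{1}$ (cited already in the paper).

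\emph{Small $t$.} Substituting directly into the defining series,
\[
{\bf F}\!\left(\nu+\tfrac{1}{2},-\nu+\tfrac{1}{2};\lambda;-t\right) = \frac{1}{\Gamma(\lambda)} - \frac{\tfrac{1}{4}-\nu^{2}}{\Gamma(\lambda+1)}\,t + \mathcal{O}_{\nu,\lambda}(t^{2}).
\]
If $\Re(\lambda) > 0$, the constant term $1/\Gamma(\lambda)$ is finite (depending only on $\lambda$), yielding the $\mathcal{O}_{\nu,\lambda}(1)$ bound. If $\lambda = 0$, then the reciprocal gamma factor $1/\Gamma(\lambda)=0$, so the constant term vanishes and the linear term produces the required $\mathcal{O}_{\nu}(t)$ estimate.

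\emph{Large $t$.} Here I would apply the standard connection formula expressing ${}_{2}F_{1}(a,b;c;z)$ in terms of two series about $z=\infty$ (see, e.g., \cite[\S 15.8]{NIST}), with $a=\nu+\tfrac{1}{2}$, $b=-\nu+\tfrac{1}{2}$, $c=\lambda$, and $z=-t$. In the generic case $2\nu\notin\mathbb{Z}$ the formula gives
\[
{\bf F}\!\left(\nu+\tfrac{1}{2},-\nu+\tfrac{1}{2};\lambda;-t\right) = C_{1}(\nu,\lambda)\,t^{-\nu-\frac{1}{2}}\bigl(1+o(1)\bigr) + C_{2}(\nu,\lambda)\,t^{\nu-\frac{1}{2}}\bigl(1+o(1)\bigr),
\]
with the dominant real exponent equal to $|\Re(\nu)|-\tfrac{1}{2}$, fitting inside the stated bound (the $\log(1+t)$ factor is slack here). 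In the exceptional case $2\nu\in\mathbb{Z}$ the two exponents coincide modulo integers and the gamma factors in $C_{1},C_{2}$ develop compensating poles; the standard theory produces a logarithmic correction whose leading order is precisely $t^{|\Re(\nu)|-\frac{1}{2}}\log t$. Writing $(1+t)^{|\Re(\nu)|-\frac{1}{2}}\log(1+t)$ in place of $t^{|\Re(\nu)|-\frac{1}{2}}\log t$ extends the estimate to all sufficiently large $t$.

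\emph{Main obstacle.} The principal technical point is the uniform treatment of the exceptional parameter values $2\nu\in\mathbb{Z}$ in the large-$t$ regime, where the two asymptotic exponents coalesce and the non-integer connection formula breaks down through cancellation of singular $\Gamma$-factors. This forces the $\log(1+t)$ factor to appear in the statement of the lemma, and is resolved either by appealing directly to the explicit logarithmic form of the large-$|z|$ expansion of ${}_{2}F_{1}$ available in \cite[\S 15.8]{NIST}, or by a limiting argument from generic $\nu$ in which the logarithm arises as the derivative of the exponent $t^{s}$ at the colliding value of $s$.
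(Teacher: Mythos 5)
Your small-$t$ argument coincides with the paper's: both simply read off the power series of the regularized function and use $1/\Gamma(0)=0$ to kill the constant term when $\lambda=0$, leaving the linear term. For the large-$t$ estimate you take a genuinely different, though closely related, route. The paper first applies the linear (Pfaff) transformation \cite[15.8.E1]{NIST},
\[
{\bf F}\left(\nu+\tfrac{1}{2},-\nu+\tfrac{1}{2};\lambda;-t\right)=(1+t)^{-\nu-\frac{1}{2}}\,{\bf F}\left(\nu+\tfrac{1}{2},\lambda+\nu-\tfrac{1}{2};\lambda;\tfrac{t}{1+t}\right),
\]
which converts $t\to+\infty$ into the approach of the argument to $1^-$, and then quotes the three-case description of the behaviour of the hypergeometric function near $1$ \cite[\S 15.4(ii)]{NIST}, governed by $c-a-b=-2\nu$; multiplying by $|(1+t)^{-\nu-\frac{1}{2}}|=(1+t)^{-\Re(\nu)-\frac{1}{2}}$ yields the exponent $|\Re(\nu)|-\frac{1}{2}$ in each case. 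You instead invoke the connection formula at $z=\infty$ directly. Both routes are legitimate and give the same exponent; the practical difference is the size of the degenerate set that must be treated separately. In the paper's version the only exceptional value is $\nu=0$ (where $c-a-b=0$ and the logarithm genuinely appears), whereas your connection formula breaks down whenever $a-b=2\nu\in\mathbb{Z}$, so you must appeal to the logarithmic forms of the expansion at infinity for every integer and half-integer value of $2\nu$ (including the terminating cases, where the series is a polynomial and the bound is immediate). You correctly identify this coalescence as the main obstacle and the resolution you indicate is standard, so the proposal is sound; the Pfaff route simply keeps the exceptional-case bookkeeping lighter.
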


\begin{proof} The estimate \eqref{eq20} is a consequence of the power series expansion of the regularized hypergeometric function. To prove the estimate \eqref{eq21}, we use the known linear transformation formula for the regularized hypergeometric function (see, e.g., \cite[15.8.E1]{NIST}) and obtain
\begin{equation}\label{eq22}
{\bf F}\left( {\nu  + \frac{1}{2}, - \nu  + \frac{1}{2};\lambda ; - t} \right)  = \left( {1 + t} \right)^{ - \nu  - \frac{1}{2}}{\bf F}\left( {\nu  + \frac{1}{2},\lambda  + \nu  - \frac{1}{2};\lambda ;\frac{t}{{1 + t}}} \right)
\end{equation}
for $t>0$. Now, by employing the known behaviour of the regularized hypergeometric function near one (see, e.g., \cite[\S 15.4(ii)]{NIST}), we find that
\[
{\bf F}\left( {\nu  + \frac{1}{2},\lambda  + \nu  - \frac{1}{2};\lambda ;\frac{t}{{1 + t}}} \right) = \begin{cases} \mathcal{O}_{\nu ,\lambda } \left( 1 \right) & \text{if } \Re \left( \nu  \right) < 0 \text{ or } \Re \left( \nu  \right) = 0,\nu  \ne 0, \\ \mathcal{O}_\lambda  \left( {\log \left( {1 + t} \right)} \right) & \text{if } \nu = 0, \\ \mathcal{O}_{\nu ,\lambda } \big( {\left( {1 + t} \right)^{2\Re \left( \nu  \right)} } \big) & \text{if } \Re \left( \nu  \right) > 0\end{cases}
\]
as $t\to +\infty$. Combining these estimates with \eqref{eq22}, the result claimed in \eqref{eq21} follows.
\end{proof}

We continue with the proof of \eqref{eq9}. The modified Bessel function can be represented in terms of the regularized hypergeometric function as
\[
K_\nu  \left( u \right) = \pi^{\frac{1}{2}} e^{ - u} \left(2u\right)^{\lambda  - \frac{1}{2}} \int_0^{ + \infty } {e^{ - 2ut} t^{\lambda  - 1} {\bf F} \left( {\nu  + \frac{1}{2}, - \nu  + \frac{1}{2};\lambda ; - t} \right)dt},
\]
for $|\arg u|<\frac{\pi}{2}$ and with an arbitrary complex $\lambda$, $\Re \left( \lambda  \right) > 0$ \cite[ent. 19.2, p. 195]{Oberhettinger}. Inserting this representation into \eqref{eq1} and changing the order of integration, we deduce
\[
R_N^{\left( K \right)} \left( {z,\nu } \right) = \left( { - 1} \right)^N \frac{{\cos \left( {\pi \nu } \right)}}{\pi }\frac{2^\lambda}{{z^N }}\int_0^{ + \infty } {t^{\lambda  - 1} {\bf F} \left( {\nu  + \frac{1}{2}, - \nu  + \frac{1}{2};\lambda ; - t} \right)\int_0^{ + \infty } {\frac{{u^{N + \lambda  - 1} e^{ - 2\left( {1 + t} \right)u} }}{{1 + u/z}}du} dt} .
\]
The change in the order of integration is justified because the infinite double integrals are absolutely convergent, which can be seen by appealing to Lemma \ref{lemma1}. The inner integral is expressible in terms of the basic terminant $\Lambda _p\left(w\right)$ since
\[
\int_0^{ + \infty } {\frac{{u^{N + \lambda  - 1} e^{ - 2\left( {1 + t} \right)u} }}{{1 + u/z}}du}  = \frac{{\Gamma \left( {N + \lambda } \right)}}{{2^{N + \lambda } \left( {1 + t} \right)^{N + \lambda } }}\Lambda _{N + \lambda} \left( {2z\left( {1 + t} \right)} \right)
\]
(cf. equation \eqref{eq355}), and therefore we obtain
\begin{align*}
R_N^{\left( K \right)} \left( {z,\nu } \right) = & \left( { - 1} \right)^N \frac{{\cos \left( {\pi \nu } \right)}}{\pi }\frac{{\Gamma \left( {N + \lambda } \right)}}{{2^N}}\frac{1}{{z^N }} \\ & \times \int_0^{ + \infty } {\frac{{t^{\lambda  - 1} }}{{\left( {1 + t} \right)^{N + \lambda } }}{\bf F} \left( {\nu  + \frac{1}{2}, - \nu  + \frac{1}{2};\lambda ; - t} \right)\Lambda _{N + \lambda} \left( {2z\left( {1 + t} \right)} \right)dt} .
\end{align*}
Since $\Lambda _p \left( w \right) = \mathcal{O}\left( 1 \right)$ as $w \to \infty$ in the sector $\left| {\arg w} \right| < \frac{{3\pi }}{2}$, by analytic continuation, this representation is valid in a wider range than \eqref{eq1}, namely in $\left| {\arg z} \right| < \frac{{3\pi }}{2}$.

Let us now turn our attention to the proof of \eqref{eq45}. It is enough to show that
\begin{align*}
& \mathop {\lim }\limits_{\lambda  \to 0 + } \int_0^{ + \infty } {\frac{{t^{\lambda  - 1} }}{{\left( {1 + t} \right)^{N + \lambda } }}{\bf F}\left( {\nu  + \frac{1}{2}, - \nu  + \frac{1}{2};\lambda ; - t} \right)\Lambda _{N + \lambda} \left( {2z\left( {1 + t} \right)} \right)dt} \\ & = \Lambda _{N } \left( {2z} \right) + \int_0^{ + \infty } {\frac{{t^{ - 1} }}{{\left( {1 + t} \right)^N }}{\bf F}\left( {\nu  + \frac{1}{2}, - \nu  + \frac{1}{2};0; - t} \right)\Lambda _{N} \left( {2z\left( {1 + t} \right)} \right)dt} ,
\end{align*}
for any positive integer $N$ and fixed complex numbers $z$ and $\nu$ satisfying $\left| {\arg z} \right| < \frac{{3\pi }}{2}$ and $\left| {\Re \left( \nu  \right)} \right| < N + \frac{1}{2}$. Note that the convergence of the integral in the second line is guaranteed by Lemma \ref{lemma1} and the boundedness of the basic terminant. By a simple algebraic manipulation and an application of the beta integral (cf. \cite[5.12.E3]{NIST}), we can assert that
\begin{align*}
& \int_0^{ + \infty } {\frac{{t^{\lambda  - 1} }}{{\left( {1 + t} \right)^{N + \lambda } }}{\bf F}\left( {\nu  + \frac{1}{2}, - \nu  + \frac{1}{2};\lambda ; - t} \right)\Lambda _{N + \lambda } \left( {2z\left( {1 + t} \right)} \right)dt} \\ = \; & \int_0^{ + \infty } {\frac{{t^{\lambda  - 1} }}{{\left( {1 + t} \right)^{N + \lambda } }}\frac{1}{{\Gamma \left( \lambda  \right)}}\Lambda _{N + \lambda } \left( {2z} \right)dt} 
\\ & + \int_0^{ + \infty } {\frac{{t^{\lambda  - 1} }}{{\left( {1 + t} \right)^{N + \lambda } }}\left( {{\bf F}\left( {\nu  + \frac{1}{2}, - \nu  + \frac{1}{2};\lambda ; - t} \right)\Lambda _{N + \lambda } \left( {2z\left( {1 + t} \right)} \right) - \frac{1}{{\Gamma \left( \lambda  \right)}}\Lambda _{N + \lambda } \left( {2z} \right)} \right)dt} 
\\  =\; & \frac{{\Gamma \left( N \right)}}{{\Gamma \left( {N + \lambda } \right)}}\Lambda _{N + \lambda } \left( {2z} \right) \\ & + \int_0^{ + \infty } {\frac{{t^{\lambda  - 1} }}{{\left( {1 + t} \right)^{N + \lambda } }}\left( {{\bf F}\left( {\nu  + \frac{1}{2}, - \nu  + \frac{1}{2};\lambda ; - t} \right)\Lambda _{N + \lambda } \left( {2z\left( {1 + t} \right)} \right) - \frac{1}{{\Gamma \left( \lambda  \right)}}\Lambda _{N + \lambda } \left( {2z} \right)} \right)dt} .
\end{align*}
By continuity, we have
\[
\mathop {\lim }\limits_{\lambda  \to 0 + } \frac{{\Gamma \left( N \right)}}{{\Gamma \left( {N + \lambda } \right)}}\Lambda _{N + \lambda } \left( {2z} \right) = \Lambda _{N } \left( {2z} \right),
\]
and thus, it remains to prove that
\begin{gather}\label{eq111}
\begin{split}
& \mathop {\lim }\limits_{\lambda  \to 0 + } \int_0^{ + \infty } {\frac{{t^{\lambda  - 1} }}{{\left( {1 + t} \right)^{N + \lambda } }}\left( {{\bf F}\left( {\nu  + \frac{1}{2}, - \nu  + \frac{1}{2};\lambda ; - t} \right)\Lambda _{N + \lambda } \left( {2z\left( {1 + t} \right)} \right) - \frac{1}{{\Gamma \left( \lambda  \right)}}\Lambda _{N + \lambda } \left( {2z} \right)} \right)dt} \\ & = \int_0^{ + \infty } {\frac{{t^{ - 1} }}{{\left( {1 + t} \right)^N }}{\bf F}\left( {\nu  + \frac{1}{2}, - \nu  + \frac{1}{2};0; - t} \right)\Lambda _{N } \left( {2z\left( {1 + t} \right)} \right)dt}.
\end{split}
\end{gather}
We show that the absolute value of the integrand in the first line can be bounded pointwise by an absolutely integrable function uniformly with respect to bounded positive values of $\lambda$. Consequently, by the Lebesgue dominated convergence theorem \cite[Theorem 1.4.49, p. 111]{Tao}, the order of the limit and integration can be interchanged and the required result follows. Assume therefore that $\lambda$ is bounded, say $0<\lambda \ll 1$. Then, by Taylor's theorem, we have
\[
{\bf F}\left( {\nu  + \frac{1}{2}, - \nu  + \frac{1}{2};\lambda ; - t} \right)\Lambda _{N + \lambda } \left( {2z\left( {1 + t} \right)} \right) = \frac{1}{{\Gamma \left( \lambda  \right)}}\Lambda _{N + \lambda } \left( {2z} \right) + \mathcal{O}_{z,\nu,N} \left( t \right)
\]
as $t\to 0+$. Consequently, the integrand in the first line of \eqref{eq111} is $\mathcal{O}_{z,\nu,N} \left( {t^\lambda  } \right) = \mathcal{O}_{z,\nu,N} \left( 1 \right)$ as $t\to 0+$ and, appealing to Lemma \ref{lemma1} and to the boundedness of the basic terminant, it is $\mathcal{O}_{z,\nu,N} ( \left( {1 + t} \right)^{\left| {\Re \left( \nu  \right)} \right| - N - \frac{3}{2}} \log \left( {1 + t} \right) ) + \mathcal{O}_{z,\nu,N} ( \left( {1 + t} \right)^{ - N - 1} )$ as $t\to+\infty$. Therefore, there exists a positive constant $C_{z,\nu,N}$, independent of $t$ and $\lambda$ such that the absolute value of the integrand in the first line of \eqref{eq111} is bounded from above by
\begin{equation}\label{eq720}
C_{z,\nu,N} \big(\left( {1 + t} \right)^{\left| {\Re \left( \nu  \right)} \right| - N - \frac{3}{2}} \log \left( {1 + t} \right) + \left( {1 + t} \right)^{ - N - 1} \big)
\end{equation}
for any $t>0$. Since $N\geq 1$ and $\left| {\Re \left( \nu  \right)} \right| < N + \frac{1}{2}$, the function \eqref{eq720} is absolutely integrable on the positive $t$-axis and the proof is complete.

To prove the representations \eqref{eq54} and \eqref{eq55}, we can proceed as follows. The Bessel function $J_\nu  \left( z \right)$ is related to the modified Bessel function $K_\nu  \left( z \right)$ via the connection formula
\begin{gather}\label{eq080}
\begin{split}
\pi iJ_\nu  \left( z \right) = \; & e^{ - \frac{\pi }{2}i\nu } K_\nu  \left( {ze^{ - \frac{\pi }{2}i} } \right) - e^{\frac{\pi }{2}i\nu } K_\nu  \left( {ze^{\frac{\pi }{2}i} } \right)
\\ =\; & i\cos \omega \left( {e^{\frac{\pi }{4}i} e^{iz} K_\nu  \left( {ze^{\frac{\pi }{2}i} } \right) + e^{ - \frac{\pi }{4}i} e^{ - iz} K_\nu  \left( {ze^{ - \frac{\pi }{2}i} } \right)} \right) \\ &+ \sin \omega \left( {e^{\frac{\pi }{4}i} e^{iz} K_\nu  \left( {ze^{\frac{\pi }{2}i} } \right) - e^{ - \frac{\pi }{4}i} e^{ - iz} K_\nu  \left( {ze^{ - \frac{\pi }{2}i} } \right)} \right),
\end{split}
\end{gather}
with $\left| {\arg z} \right| \leq \frac{\pi }{2}$ and $\omega  = z - \frac{\pi }{2}\nu  - \frac{\pi }{4}$ \cite[10.27.E9]{NIST}. Now let $N$ and $M$ be arbitrary non-negative integers. We express the functions $K_\nu  \left( {ze^{ \pm \frac{\pi }{2}i} } \right)$ in the first parenthesis as
\[
K_\nu  \left( {ze^{ \pm \frac{\pi }{2}i} } \right) = \left( {\frac{\pi }{{2ze^{ \pm \frac{\pi }{2}i} }}} \right)^{\frac{1}{2}} e^{ \mp iz} \left( {\sum\limits_{n = 0}^{2N - 1} {\frac{{a_n \left( \nu  \right)}}{{\left( { \pm iz} \right)^n }}}  + R_{2N}^{\left( K \right)} \left( {ze^{ \pm \frac{\pi }{2}i} ,\nu } \right)} \right)
\]
and in the second parenthesis as
\[
K_\nu  \left( {ze^{ \pm \frac{\pi }{2}i} } \right) = \left( {\frac{\pi }{{2ze^{ \pm \frac{\pi }{2}i} }}} \right)^{\frac{1}{2}} e^{ \mp iz} \left( {\sum\limits_{m = 0}^{2M} {\frac{{a_m \left( \nu  \right)}}{{\left( { \pm iz} \right)^m }}}  + R_{2M+1}^{\left( K \right)} \left( {ze^{ \pm \frac{\pi }{2}i} ,\nu } \right)} \right),
\]
(cf. equation \eqref{eq70}) which, after some cancellations, yields
\begin{gather}\label{eq081}
\begin{split}
J_\nu  \left( z \right) & = \left( {\frac{2}{{\pi z}}} \right)^{\frac{1}{2}} \left( \cos \omega \left( {\sum\limits_{n = 0}^{N - 1} {\left( { - 1} \right)^n \frac{{a_{2n} \left( \nu  \right)}}{{z^{2n} }}}  + R_{2N}^{\left( J \right)} \left( {z,\nu } \right)} \right) \right. \\ & \hspace{130pt} \left.- \sin \omega \left( {\sum\limits_{m = 0}^{M - 1} {\left( { - 1} \right)^m \frac{{a_{2m + 1} \left( \nu  \right)}}{{z^{2m + 1} }}}  - R_{2M + 1}^{\left( J \right)} \left( {z,\nu } \right)} \right) \right).
\end{split}
\end{gather}
The remainder terms $R_{2N}^{\left( J \right)} \left( {z,\nu } \right)$ and $R_{2M + 1}^{\left( J \right)} \left( {z,\nu } \right)$ are directly related to the remainders $R_{2N}^{\left( K \right)} \left( {ze^{ \pm \frac{\pi }{2}i} ,\nu } \right)$ and $R_{2M+1}^{\left( K \right)} \left( {ze^{ \pm \frac{\pi }{2}i} ,\nu } \right)$ since
\begin{equation}\label{eq56}
2R_{2N}^{\left( J \right)} \left( {z,\nu } \right) = R_{2N}^{\left( K \right)} \left( {ze^{\frac{\pi }{2}i} ,\nu } \right) + R_{2N}^{\left( K \right)} \left( {ze^{ - \frac{\pi }{2}i} ,\nu } \right)
\end{equation}
and
\begin{equation}\label{eq57}
2iR_{2M + 1}^{\left( J \right)} \left( {z,\nu } \right) = R_{2M+1}^{\left( K \right)} \left( {ze^{\frac{\pi }{2}i} ,\nu } \right) - R_{2M+1}^{\left( K \right)} \left( {ze^{ - \frac{\pi }{2}i} ,\nu } \right),
\end{equation}
respectively. To prove \eqref{eq54}, we substitute into the right-hand side of \eqref{eq56}, respectively \eqref{eq57}, the representation \eqref{eq9} with $N=2N$, respectively $N=2M+1$. The desired result then follows using analytic continuation in $z$ and the definition of the basic terminant $\Pi_p\left(w\right)$. Formula \eqref{eq55} can be proved analogously, applying the representation \eqref{eq45}.

Formula \eqref{eq91} can be obtained by substituting into the right-hand side of \eqref{eq56}, respectively \eqref{eq57}, the representation \eqref{eq1} with $N=2N$, respectively $N=2M+1$. The representation \eqref{eq92} follows by combining formula \eqref{eq1} with the functional relations \eqref{eq101} and $- 2K'_\nu  \left( t \right) = K_{\nu + 1} \left( t \right) + K_{\nu  - 1} \left( t \right)$. Similarly, formula \eqref{eq93} can be obtained by combining \eqref{eq91} with the functional relations \eqref{eq102} and $- 2K'_\nu  \left( t \right) = K_{\nu + 1} \left( t \right) + K_{\nu  - 1} \left( t \right)$.

\section{Proof of the error bounds}\label{section3}

In this section, we prove the bounds for the remainder terms $R_N^{\left( K \right)} \left( {z,\nu } \right)$, $R_N^{\left( J \right)} \left( {z,\nu } \right)$, $R_N^{(K')} \left( {z,\nu } \right)$ and $R_N^{(J')} \left( {z,\nu } \right)$ given in Theorems \ref{theorem3}--\ref{theorem7}. To this end, we shall state and prove a series of lemmata.

\begin{lemma}\label{lemma2} If $t$ is positive and $\nu$ and $\lambda$ are real numbers such that $\lambda  > \max \left( {0,\frac{1}{2} - \left| \nu  \right|} \right)$, then
\[
{\bf F}\left( {\nu  + \frac{1}{2}, - \nu  + \frac{1}{2};\lambda ; - t} \right) \geq 0.
\]
\end{lemma}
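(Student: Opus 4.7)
The plan is to reduce the statement to a power series with manifestly non-negative terms, exploiting the same Pfaff transformation that was used in the proof of Lemma \ref{lemma1}. First I would note that the regularized hypergeometric function is symmetric in its first two parameters, so ${\bf F}\left( {\nu + \frac{1}{2}, -\nu + \frac{1}{2}; \lambda; -t} \right)$ is invariant under the substitution $\nu \mapsto -\nu$; consequently, it suffices to treat the case $\nu \geq 0$.

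Next I would invoke the identity \eqref{eq22}, namely
\[
{\bf F}\left( {\nu + \frac{1}{2}, -\nu + \frac{1}{2}; \lambda; -t} \right) = \left(1 + t\right)^{-\nu - \frac{1}{2}} {\bf F}\left( {\nu + \frac{1}{2}, \lambda + \nu - \frac{1}{2}; \lambda; \frac{t}{1+t}} \right),
\]
and inspect the parameters appearing on the right-hand side. Under the standing hypothesis $\nu \geq 0$ and $\lambda > \max\left(0, \frac{1}{2} - \left|\nu\right|\right) = \max\left(0, \frac{1}{2} - \nu\right)$, one immediately verifies that $\nu + \frac{1}{2} > 0$, that $\lambda > 0$, and that $\lambda + \nu - \frac{1}{2} > 0$; the last inequality is equivalent to $\lambda > \frac{1}{2} - \nu$, which follows from the threshold whether $\nu < \frac{1}{2}$ or $\nu \geq \frac{1}{2}$. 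Therefore all three Pochhammer symbols $\left(\nu + \frac{1}{2}\right)_n$, $\left(\lambda + \nu - \frac{1}{2}\right)_n$ and $\left(\lambda\right)_n$ are strictly positive for every $n \geq 0$.

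To conclude, $\frac{t}{1+t}$ lies in $[0,1)$, hence inside the disk of convergence of the Gauss series; by the previous step each coefficient of that series is non-negative, so the sum is $\geq 0$. Multiplying by the positive prefactor $\left(1+t\right)^{-\nu - \frac{1}{2}}$ then delivers the lemma. There is essentially no serious obstacle in this argument: it is a direct consequence of the Pfaff identity already on hand combined with elementary positivity, and the only delicate point is the verification that the threshold $\lambda > \max\left(0, \frac{1}{2} - \left|\nu\right|\right)$ is sharp enough to keep the transformed parameter $\lambda + \nu - \frac{1}{2}$ positive in both the small- and large-$\left|\nu\right|$ regimes.
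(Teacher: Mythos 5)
Your proposal is correct and follows essentially the same route as the paper: reduce to $\nu \ge 0$ by the symmetry of the first two parameters, apply the Pfaff transformation \eqref{eq22}, and observe that every term of the resulting Gauss series in $\frac{t}{1+t}\in[0,1)$ is non-negative precisely because $\lambda+\nu-\frac{1}{2}>0$ under the stated threshold. No gaps.
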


\begin{proof} Since ${\bf F}\left( {\nu  + \frac{1}{2}, - \nu  + \frac{1}{2};\lambda; - t} \right) = {\bf F}\left( { - \nu  + \frac{1}{2},\nu  + \frac{1}{2};\lambda; - t} \right)$, we may assume that $\nu$ is non-negative. Using \eqref{eq22} and the power series expansion of the regularized hypergeometric function, we deduce that
\begin{equation}\label{eq44}
{\bf F}\left( {\nu  + \frac{1}{2}, - \nu  + \frac{1}{2};\lambda ; - t} \right)  = \frac{{\left( {1 + t} \right)^{ - \nu  - \frac{1}{2}} }}{{\Gamma \left( {\nu  + \frac{1}{2}} \right)\Gamma \left( {\lambda  + \nu  - \frac{1}{2}} \right)}}\sum\limits_{n = 0}^\infty  {\frac{{\Gamma \left( {\nu  + \frac{1}{2} + n} \right)\Gamma \left( {\lambda  + \nu  - \frac{1}{2} + n} \right)}}{{\Gamma \left( {\lambda  + n} \right)\Gamma \left( {n + 1} \right)}}\left( {\frac{t}{{1 + t}}} \right)^n } ,
\end{equation}
for any $t>0$. Since the right-hand side is non-negative if $\lambda  + \nu  - \frac{1}{2}$ is positive, i.e., if $\lambda  > \frac{1}{2} - \nu$, the statement of the lemma follows.
\end{proof}

We remark that, by direct numerical computation, it can be verified that when $\left| \nu  \right| < \frac{1}{2}$ and $0<\lambda  < \frac{1}{2} - \left| \nu  \right|$, the quantity ${\bf F}\left( {\nu  + \frac{1}{2}, - \nu  + \frac{1}{2};\lambda ; - t} \right)$, as a function of $t>0$, may take both positive and negative values. Therefore, in general, the condition $\lambda  > \frac{1}{2} - \left| \nu  \right|$ in Lemma \ref{lemma2} cannot be weakened.

\begin{lemma}\label{lemma3} If $t$ is positive and $\nu$ is real, then
\begin{equation}\label{eq46}
{\bf F}\left( {\nu  + \frac{3}{2}, - \nu  - \frac{1}{2};0; - t} \right) + {\bf F}\left( {\nu  - \frac{1}{2}, - \nu  + \frac{3}{2};0; - t} \right) \geq 0.
\end{equation}
\end{lemma}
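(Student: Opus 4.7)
The plan is to reduce the inequality in \eqref{eq46} to an application of Lemma \ref{lemma2} at the safe value $\lambda = 2$, where its sign hypothesis is unconditionally satisfied.

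Because $\mathbf{F}$ is symmetric in its first two parameters, the substitution $\nu\mapsto -\nu$ simply swaps the two summands in \eqref{eq46}, so I may assume $\nu\geq 0$. Next I invoke the identity
\[
\mathbf{F}(a,b;0;w) = a\,b\,w\,\mathbf{F}(a+1,b+1;2;w),
\]
which follows directly from the defining power series of $\mathbf{F}$ (the $n=0$ term vanishes because $1/\Gamma(0)=0$, and reindexing the remaining terms produces the right-hand side). Applied to both summands in \eqref{eq46}, this extracts a common factor of $t$ and leaves two hypergeometrics with $c=2$ and $a+b=3$. Euler's transformation
\[
\mathbf{F}(a,b;c;w) = (1-w)^{c-a-b}\mathbf{F}(c-a,c-b;c;w)
\]
then recasts them, up to the common factor $(1+t)^{-1}$, in the canonical form $\mathbf{F}(\mu+\tfrac{1}{2},-\mu+\tfrac{1}{2};2;-t)$ with $\mu=\nu+1$ and $\mu=\nu-1$, respectively. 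Lemma \ref{lemma2} applied at $\lambda=2$ (for which the condition $\lambda>\max(0,\tfrac{1}{2}-|\mu|)$ is automatic) shows that both are non-negative. Collecting the factors, the left-hand side of \eqref{eq46} becomes
\[
\frac{t}{1+t}\Bigl[(\nu+\tfrac{3}{2})(\nu+\tfrac{1}{2})\,U + (\nu-\tfrac{1}{2})(\nu-\tfrac{3}{2})\,V\Bigr], \qquad U,V\geq 0.
\]

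A case analysis on $\nu\geq 0$ then finishes the proof. When $\nu\in[0,\tfrac{1}{2}]\cup[\tfrac{3}{2},\infty)$, both polynomial coefficients are non-negative and the claim is immediate. The sole remaining range, $\nu\in(\tfrac{1}{2},\tfrac{3}{2})$, is the main obstacle: here $(\nu-\tfrac{1}{2})(\nu-\tfrac{3}{2})<0$, and one must verify that the positive contribution from $U$ outweighs the negative contribution from $V$. For this, I expand $U$ and $V$ as positive power series in $s=t/(1+t)$ via formula \eqref{eq44} and compare them term by term; the gamma-function identity
\[
\Gamma(\nu+\tfrac{3}{2}+n)\Gamma(\nu+\tfrac{5}{2}+n) = (\nu+\tfrac{1}{2}+n)^{2}(\nu+\tfrac{3}{2}+n)(\nu-\tfrac{1}{2}+n)\,\Gamma(\nu-\tfrac{1}{2}+n)\Gamma(\nu+\tfrac{1}{2}+n)
\]
reduces the ratio of corresponding $s^n$-coefficients to a manageable polynomial quotient, which can be shown to exceed the constant $|(\nu-\tfrac{1}{2})(\nu-\tfrac{3}{2})|/[(\nu+\tfrac{3}{2})(\nu+\tfrac{1}{2})]$ for every $n\geq 0$ throughout this parameter range. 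This term-wise dominance yields the non-negativity of the bracket, and hence of the full sum.
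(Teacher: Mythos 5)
Your reduction is correct and genuinely different from the paper's: the contiguous relation ${\bf F}(a,b;0;w)=ab\,w\,{\bf F}(a+1,b+1;2;w)$ followed by Euler's transformation does turn the left-hand side of \eqref{eq46} into
\[
\frac{t}{1+t}\Bigl[\bigl(\nu+\tfrac{3}{2}\bigr)\bigl(\nu+\tfrac{1}{2}\bigr)U+\bigl(\nu-\tfrac{1}{2}\bigr)\bigl(\nu-\tfrac{3}{2}\bigr)V\Bigr],
\qquad U={\bf F}\bigl(\nu+\tfrac{3}{2},-\nu-\tfrac{1}{2};2;-t\bigr),\quad V={\bf F}\bigl(\nu-\tfrac{1}{2},-\nu+\tfrac{3}{2};2;-t\bigr),
\]
and $U,V\ge 0$ by Lemma \ref{lemma2} at $\lambda=2$, so the cases $\nu\in[0,\tfrac{1}{2}]\cup[\tfrac{3}{2},\infty)$ follow at once. (The paper instead stays at $c=0$, determines the sign of each summand separately via \eqref{eq48}, and disposes of the bad range with a three-term recurrence for the associated Legendre function $P^{1}_{\nu-1/2}(1+2t)$.)

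The difficulty is the remaining range $\nu\in(\tfrac{1}{2},\tfrac{3}{2})$, which is the entire content of the lemma, and there your argument has a genuine gap. Formula \eqref{eq44} does not exhibit $U$ and $V$ as positive power series in $s=t/(1+t)$: it exhibits them as $(1+t)^{-\mu-\frac{1}{2}}$ times such a series, with $\mu=\nu+1$ for $U$ and $\mu=|\nu-1|$ for $V$. Since $\nu+1>|\nu-1|$, the prefactor attached to $U$ is the \emph{smaller} of the two, i.e.\ the prefactors work against the inequality you need; term-wise dominance of the $s^{n}$-coefficients would give $(\nu+\tfrac{3}{2})(\nu+\tfrac{1}{2})\tilde U(s)\ge|(\nu-\tfrac{1}{2})(\nu-\tfrac{3}{2})|\tilde V(s)$ for the series parts, but you must still absorb an extra factor $(1+t)^{-2\min(\nu,1)}$ on the $U$-side, which tends to $0$ as $t\to+\infty$, so the coefficient comparison alone cannot close the argument. (In addition, your gamma-function identity matches the $V$-series only for $\nu\ge 1$; for $\tfrac{1}{2}<\nu<1$ the parameters there are $\tfrac{3}{2}-\nu+n$ and $\tfrac{5}{2}-\nu+n$.) Finally, the decisive quantitative step --- that the ratio of corresponding coefficients exceeds the required constant for every $n$ --- is asserted with ``can be shown'' rather than proved. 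As it stands, the proposal establishes the lemma only for $|\nu|\le\tfrac{1}{2}$ and $|\nu|\ge\tfrac{3}{2}$; to finish you would either have to carry out (and repair) the comparison with the prefactors included, or fall back on something like the paper's Legendre recurrence for the middle range.
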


\begin{proof} First, we show that
\begin{equation}\label{eq48}
{\bf F}\left( {\nu  + \frac{1}{2}, - \nu  + \frac{1}{2};0; - t} \right) \begin{cases} <0 & \text{if } 0\leq \left|\nu\right| <\frac{1}{2}, \\ \geq 0 & \text{if }  \left|\nu\right|\geq \frac{1}{2} \end{cases}
\end{equation}
for any $t>0$. Since ${\bf F}\left( {\nu  + \frac{1}{2}, - \nu  + \frac{1}{2};0; - t} \right) = {\bf F}\left( { - \nu  + \frac{1}{2},\nu  + \frac{1}{2};0; - t} \right)$, we may assume that $\nu$ is non-negative. In the case that $\lambda=0$, the expansion \eqref{eq44} may be re-written in the form
\[
{\bf F}\left( {\nu  + \frac{1}{2}, - \nu  + \frac{1}{2};0; - t} \right) = \left( {\nu  - \frac{1}{2}} \right)\frac{{\left( {1 + t} \right)^{ - \nu  - \frac{1}{2}} }}{{\Gamma ^2 \left( {\nu  + \frac{1}{2}} \right)}}\sum\limits_{n = 0}^\infty  {\frac{{\Gamma \left( {\nu  + \frac{3}{2} + n} \right)\Gamma \left( {\nu  + \frac{1}{2} + n} \right)}}{{\Gamma \left( {n + 1} \right)\Gamma \left( {n + 2} \right)}}\left( {\frac{t}{{1 + t}}} \right)^{n + 1} } .
\]
The sign of the expression on the right-hand side is determined by the factor $\nu-\frac{1}{2}$, whence \eqref{eq48} indeed holds.

Consider now the proof of \eqref{eq46}. We may assume, without loss of generality, that $\nu$ is non-negative since the left-hand side of \eqref{eq46} does not change if $\nu$ is replaced by $-\nu$. By \eqref{eq48}, the terms on the left-hand side of \eqref{eq46} are non-negative for positive $t$, except, perhaps, when $\frac{1}{2} < \nu < \frac{3}{2}$. We treat this case separately. The regularized hypergeometric function ${\bf F}\left( {\nu  + \frac{1}{2}, - \nu + \frac{1}{2};0 ; - t} \right)$ can be written in terms of the associated Legendre function as
\[
{\bf F}\left( {\nu  + \frac{1}{2}, - \nu + \frac{1}{2};0 ; - t} \right)=\left( {\frac{t}{{1 + t}}} \right)^{\frac{1}{2}} P_{\nu  - \frac{1}{2}}^{1} \left( {1+2t} \right)
\]
(cf. \cite[15.9.E21]{NIST}). The functional relation (see, e.g., \cite[14.10.E3]{NIST})
\[
\left( {\nu  - \frac{1}{2}  } \right)P_{\nu  + \frac{1}{2}}^{1  } \left( {1 + 2t} \right) - 2\nu \left( {1 + 2t} \right)P_{\nu  - \frac{1}{2}}^{1 } \left( {1 + 2t} \right) + \left( {\nu  + \frac{1}{2}  } \right)P_{\nu  - \frac{3}{2}}^{1 } \left( {1 + 2t} \right) = 0
\]
then implies
\begin{multline*}
{\bf F}\left( {\nu  + \frac{3}{2}, - \nu  - \frac{1}{2};0; - t} \right) + {\bf F}\left( {\nu  - \frac{1}{2}, - \nu  + \frac{3}{2};0; - t} \right) \\ =  \frac{1}{{2\nu }}\left( {{\bf F}\left( {\nu  + \frac{3}{2}, - \nu  - \frac{1}{2};0; - t} \right) - {\bf F}\left( {\nu  - \frac{1}{2}, - \nu  + \frac{3}{2};0; - t} \right)} \right) \\  + 2\left( {1 + 2t} \right){\bf F}\left( {\nu  + \frac{1}{2}, - \nu  + \frac{1}{2};0; - t} \right).
\end{multline*}
From \eqref{eq48}, it is seen that ${\bf F}\left( {\nu  + \frac{3}{2}, - \nu  - \frac{1}{2};0; - t} \right)$ and ${\bf F}\left( {\nu  + \frac{1}{2}, - \nu  + \frac{1}{2};0; - t} \right)$ are positive and ${\bf F}\left( {\nu  - \frac{1}{2}, - \nu  + \frac{3}{2};0; - t} \right)$ is negative whenever $t$ is positive and $\frac{1}{2} < \nu < \frac{3}{2}$. This completes the proof of the lemma.
\end{proof}

\begin{lemma} Let $N$ be a non-negative integer and let $\nu$ and $\lambda$ be arbitrary complex numbers such that $\Re\left(\lambda\right)>0$. Then
\begin{equation}\label{eq17}
a_N \left( \nu  \right) = \left( { - 1} \right)^N \frac{{\cos \left( {\pi \nu } \right)}}{\pi }\frac{{\Gamma \left( {N + \lambda } \right)}}{{2^N }}\int_0^{ + \infty } {\frac{{t^{\lambda  - 1} }}{{\left( {1 + t} \right)^{N + \lambda } }}{\bf F}\left( {\nu  + \frac{1}{2}, - \nu  + \frac{1}{2};\lambda ; - t} \right)dt} ,
\end{equation}
provided $\left|\Re\left(\nu\right)\right|<N+\frac{1}{2}$, and
\begin{gather}\label{eq18}
\begin{split}
b_N \left( \nu  \right) = \; & \left( { - 1} \right)^{N + 1} \frac{{\cos \left( {\pi \nu } \right)}}{\pi }\frac{{\Gamma \left( {N + \lambda } \right)}}{{2^{N + 1} }} \\ & \times \int_0^{ + \infty } {\frac{{t^{\lambda  - 1} }}{{\left( {1 + t} \right)^{N + \lambda } }}\left( {{\bf F}\left( {\nu  + \frac{3}{2}, - \nu  - \frac{1}{2};\lambda ; - t} \right) + {\bf F}\left( {\nu  - \frac{1}{2}, - \nu  + \frac{3}{2};\lambda ; - t} \right)} \right)dt} ,
\end{split}
\end{gather}
\begin{gather}\label{eq19}
\begin{split}
b_N \left( \nu  \right) = \; & \left( { - 1} \right)^{N + 1} \frac{{\cos \left( {\pi \nu } \right)}}{\pi }\frac{{\Gamma \left( N \right)}}{{2^{N + 1} }}  \\ & \times \left( {2 + \int_0^{ + \infty } {\frac{{t^{ - 1} }}{{\left( {1 + t} \right)^N }}\left( {{\bf F}\left( {\nu  + \frac{3}{2}, - \nu  - \frac{1}{2};0; - t} \right) + {\bf F}\left( {\nu  - \frac{1}{2}, - \nu  + \frac{3}{2};0; - t} \right)} \right)dt} } \right),
\end{split}
\end{gather}
provided that $\left|\Re\left(\nu\right)\right|<N-\frac{1}{2}$ and $N\geq 1$.
\end{lemma}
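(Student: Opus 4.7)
The plan is to deduce all three formulae from the integral representations of $R_N^{(K)}(z,\nu)$ established in Theorems \ref{theorem1} and \ref{theorem2}, by letting $z\to\infty$ and then using a simple linear relation to pass from $a_N$ to $b_N$.

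For \eqref{eq17}, I would multiply both sides of \eqref{eq9} by $z^N$. The defining relation \eqref{eq70} together with the asymptotic expansion \eqref{eq88} gives $z^N R_N^{(K)}(z,\nu)\to a_N(\nu)$ as $z\to\infty$. On the right-hand side, $\Lambda_p(w)\to 1$ as $w\to\infty$ in $|\arg w|<\tfrac{3\pi}{2}$ (this is the $\mathcal{O}(1)$ property quoted after \eqref{eq9} and sharpened in Appendix \ref{appendixb}), so the integrand converges pointwise to $t^{\lambda-1}(1+t)^{-N-\lambda}\,{\bf F}(\nu+\tfrac12,-\nu+\tfrac12;\lambda;-t)$. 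To pass the limit under the integral I would appeal to Lemma \ref{lemma1}: for $z$ restricted to a compact subsector of $|\arg z|<\tfrac{3\pi}{2}$ bounded away from the origin, a combination of the boundedness of $\Lambda_{N+\lambda}$ and the $\mathcal{O}$-estimates of Lemma \ref{lemma1} produces an integrable majorant of the form $C(t^{\Re(\lambda)-1}\chi_{(0,1)}(t)+(1+t)^{|\Re(\nu)|-N-\tfrac{3}{2}}\log(1+t))$; this is absolutely integrable on $(0,+\infty)$ because $\Re(\lambda)>0$ and $|\Re(\nu)|<N+\tfrac12$. Dominated convergence then yields \eqref{eq17}.

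For \eqref{eq18}, I would use the connection formula $-2K'_\nu(z)=K_{\nu+1}(z)+K_{\nu-1}(z)$ together with the uniqueness of asymptotic expansions applied to \eqref{eq88} and \eqref{eq222} to conclude
\[
b_N(\nu)=\tfrac{1}{2}\bigl(a_N(\nu+1)+a_N(\nu-1)\bigr).
\]
Now apply \eqref{eq17} with $\nu$ replaced by $\nu\pm 1$ (legitimate because the hypothesis $|\Re(\nu)|<N-\tfrac12$ ensures $|\Re(\nu\pm 1)|<N+\tfrac12$), observe that $\cos(\pi(\nu\pm 1))=-\cos(\pi\nu)$, and add the two resulting identities to obtain \eqref{eq18}.

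For \eqref{eq19}, I would repeat the first step using the representation \eqref{eq45} of Theorem \ref{theorem2} in place of \eqref{eq9}. Taking $z\to\infty$, the prefactor $\Lambda_N(2z)$ tends to $1$ and dominated convergence (justified just as in the proof of \eqref{eq45} in Section \ref{section2}, using the uniform majorant already constructed there, cf.\ the bound \eqref{eq720}) gives
\[
a_N(\nu)=(-1)^N\frac{\cos(\pi\nu)}{\pi}\frac{\Gamma(N)}{2^N}\left(1+\int_0^{+\infty}\frac{t^{-1}}{(1+t)^N}{\bf F}\bigl(\nu+\tfrac12,-\nu+\tfrac12;0;-t\bigr)dt\right).
\]
Applying this with $\nu\pm 1$ and averaging exactly as in the previous paragraph produces \eqref{eq19}, the two constant terms "$1$" combining to give the "$2$" inside the parentheses. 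The main obstacle throughout is the dominated-convergence step: the argument hinges on having a single integrable majorant that is uniform in $z$ in the relevant sector, which requires combining the boundedness of the basic terminant with the two-sided estimates of Lemma \ref{lemma1}; once this is secured, the remaining manipulations are algebraic.
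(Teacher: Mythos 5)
Your proposal is correct, but the route you take to the key identity \eqref{eq17} differs from the paper's. The paper does not pass to the limit $z\to\infty$ at all: it starts from the exact finite-difference relation \eqref{eq07} between consecutive remainders, $a_N(\nu)=z^N\bigl(R_{N+1}^{(K)}(z,\nu)-R_N^{(K)}(z,\nu)\bigr)$ (immediate from the definition \eqref{eq70}), substitutes the integral representation \eqref{eq9} for both remainders, and uses the recurrence $p\,\Lambda_{p+1}(w)=w\bigl(1-\Lambda_p(w)\bigr)$ to make the terms containing the basic terminant cancel identically; what survives is exactly the right-hand side of \eqref{eq17}, with no limit interchange to justify. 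Your argument instead extracts $a_N(\nu)$ as $\lim_{z\to+\infty}z^N R_N^{(K)}(z,\nu)$ and pushes the limit through the integral by dominated convergence, using $\Lambda_{N+\lambda}(w)\to1$ and the two-sided estimates of Lemma \ref{lemma1} to build a majorant uniform in $z$; this is valid (for $z>0$ the representation \eqref{eq355} gives $|\Lambda_{N+\lambda}(2z(1+t))|\le\Gamma(N+\Re(\lambda))/|\Gamma(N+\lambda)|$, so your majorant exists), and the same remarks apply to your treatment of the $\lambda=0$ case via \eqref{eq45}. The trade-off is that the paper's telescoping argument is purely algebraic and sidesteps the dominated-convergence step entirely, while yours is conceptually more direct but carries the burden of the uniform integrable majorant, which you do discharge correctly. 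The derivations of \eqref{eq18} and \eqref{eq19} from the $a_N$ representations via $2b_N(\nu)=a_N(\nu+1)+a_N(\nu-1)$, including the sign flip from $\cos(\pi(\nu\pm1))=-\cos(\pi\nu)$ and the verification that $|\Re(\nu\pm1)|<N+\tfrac12$, coincide with the paper's.
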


\begin{proof} From \eqref{eq70}, we can assert that
\begin{equation}\label{eq07}
a_N \left( \nu  \right) = z^N (R_{N + 1}^{\left( K \right)} \left( {z,\nu } \right) - R_N^{\left( K \right)} \left( {z,\nu } \right))
\end{equation}
for any $N\geq 0$. Substituting the integral representation \eqref{eq9} into the right-hand side of this equality and employing the relation $p \Lambda _{p + 1} \left( w \right) = w\left( {1 - \Lambda _p \left( w \right)} \right)$ (cf. \cite[8.8.E2]{NIST}), we arrive at \eqref{eq17}.

In order to prove \eqref{eq18}, we substitute \eqref{eq17} into the right-hand side of the relation
\begin{equation}\label{eq688}
2b_N \left( \nu  \right) = a_N \left( {\nu  + 1} \right) + a_N \left( {\nu  - 1} \right),
\end{equation}
which itself is a consequence of the connection formula $- 2K'_\nu  \left( z \right) = K_{\nu + 1} \left( z \right) + K_{\nu  - 1} \left( z \right)$.

Formula \eqref{eq19} follows from \eqref{eq45} by an argument similar to the derivation of \eqref{eq18} from \eqref{eq9}; the details are left to the reader.
\end{proof}

We continue with the proof of the bound \eqref{eq24}. Let $N$ be a non-negative integer and let $\nu$ and $\lambda$ be arbitrary real numbers such that $\left| {\nu} \right| < N + \frac{1}{2}$ and $\lambda > \max \left( {0,\frac{1}{2} - \left| \nu  \right|} \right)$, respectively. From \eqref{eq9}, using Lemma \ref{lemma2}, we infer that
\begin{align*}
\big| {R_N^{\left( K \right)} \left( {z,\nu } \right)} \big| \le \; & \frac{{\left| {\cos \left( {\pi \nu } \right)} \right|}}{\pi }\frac{{\Gamma \left( {N + \lambda } \right)}}{{2^N }}\frac{1}{{\left| z \right|^N }} \\ & \times \int_0^{ + \infty } {\frac{{t^{\lambda  - 1} }}{{\left( {1 + t} \right)^{N + \lambda } }}{\bf F}\left( {\nu  + \frac{1}{2}, - \nu  + \frac{1}{2};\lambda ; - t} \right)dt} \mathop {\sup }\limits_{r \geq 1} \big| {\Lambda _{N + \lambda } \left( {2zr } \right)} \big|,
\end{align*}
as long as $\left| {\arg z} \right| < \frac{3\pi}{2}$. The right-hand side can be simplified using Lemma \ref{lemma2} and the representation \eqref{eq17} for the coefficients $a_N \left( \nu  \right)$, and we deduce that
\[
\big| {R_N^{\left( K \right)} \left( {z,\nu } \right)} \big| \le \frac{{\left| {a_N \left( \nu  \right)} \right|}}{{\left| z \right|^N }}\mathop {\sup }\limits_{r \geq 1} \big| {\Lambda _{N + \lambda } \left( {2zr } \right)} \big|.
\]
Now the required bound \eqref{eq24} follows by letting $\lambda  \to \max \left( {0,\frac{1}{2} - \left| \nu  \right|} \right)$. The estimate \eqref{eq25} can be obtained analogously from \eqref{eq54}.

The bound \eqref{eq700} can be proved as follows. Assume that $N$ is a positive integer and $\nu$ is an arbitrary real number such that $\left| {\nu} \right| < N - \frac{1}{2}$. By substituting \eqref{eq45} into the functional relation \eqref{eq101}, we find
\begin{gather}\label{eq871}
\begin{split}
& R_N^{(K')} \left( {z,\nu } \right) = \left( { - 1} \right)^{N + 1} \frac{{\cos \left( {\pi \nu } \right)}}{\pi }\frac{{\Gamma \left( N \right)}}{{2^{N + 1} }}\frac{1}{{z^N }}
\\ & \times \left( 2\Lambda _{N } \left( {2z} \right) + \int_0^{ + \infty } \frac{{t^{ - 1} }}{{\left( {1 + t} \right)^N }}\left( {{\bf F}\left( {\nu  + \frac{3}{2}, - \nu  - \frac{1}{2};0; - t} \right) + {\bf F}\left( {\nu  - \frac{1}{2}, - \nu  + \frac{3}{2};0; - t} \right)} \right) \right. \\ & \times  \Bigg.\Lambda _{N } \left( {2z\left( {1 + t} \right)} \right)dt  \Bigg).
\end{split}
\end{gather}
By Lemma \ref{lemma3}, we can then assert that
\begin{align*}
& \big| {R_N^{(K')} \left( {z,\nu } \right)} \big| \le \frac{{\left| {\cos \left( {\pi \nu } \right)} \right|}}{\pi }\frac{{\Gamma \left( N \right)}}{{2^{N + 1} }}\frac{1}{{\left| z \right|^N }}
\\ & \times \left( {2 + \int_0^{ + \infty } {\frac{{t^{ - 1} }}{{\left( {1 + t} \right)^N }}\left( {{\bf F}\left( {\nu  + \frac{3}{2}, - \nu  - \frac{1}{2};0; - t} \right) + {\bf F}\left( {\nu  - \frac{1}{2}, - \nu  + \frac{3}{2};0; - t} \right)} \right)dt} } \right) \\ & \times \mathop {\sup }\limits_{r \geq 1} \big| {\Lambda _{N } \left( {2zr } \right)} \big|,
\end{align*}
provided $\left| {\arg z} \right| < \frac{3\pi}{2}$. The right-hand side of this inequality can be simplified using Lemma \ref{lemma3} and the representation \eqref{eq19} for the coefficients $b_N \left( \nu  \right)$ leading to the required estimate \eqref{eq700}. The bound \eqref{eq701} can be deduced similarly from \eqref{eq55} and \eqref{eq102}.

To prove formula \eqref{eq321}, we first note that $0 < \Lambda _p \left( w \right) < 1$ whenever $w>0$ and $p>0$ (see Proposition \ref{propb1}). Employing Lemma \ref{lemma3}, the mean value theorem of integration and the fact that the summands in the large parentheses in \eqref{eq871} have the same sign, we can infer that
\begin{align*}
& R_N^{(K')} \left( {z,\nu } \right) = \left( { - 1} \right)^{N + 1} \frac{{\cos \left( {\pi \nu } \right)}}{\pi }\frac{{\Gamma \left( N \right)}}{{2^{N + 1} }}\frac{1}{{z^N }}
\\ & \times \left( {2 + \int_0^{ + \infty } {\frac{{t^{ - 1} }}{{\left( {1 + t} \right)^N }}\left( {{\bf F}\left( {\nu  + \frac{3}{2}, - \nu  - \frac{1}{2};0; - t} \right) + {\bf F}\left( {\nu  - \frac{1}{2}, - \nu  + \frac{3}{2};0; - t} \right)} \right) dt} } \right)\theta _N^{(K')} \left( {z,\nu } \right),
\end{align*}
for some $0 < \theta _N^{(K')} \left( {z,\nu } \right) < 1$ depending on $z, \nu$ and $N$. Comparing this expression with the representation \eqref{eq19} for the coefficients $b_N \left( \nu  \right)$ yields the required formula \eqref{eq321} for $R_N^{(K')} \left( {z,\nu } \right)$. The corresponding result for $R_N^{(J')} \left( {z,\nu } \right)$ can be obtained using the analogue of the representation \eqref{eq871} and the fact that $0 < \Pi _p \left( w \right) < 1$ whenever $w>0$ and $p>0$ (see Proposition \ref{propb1}); we leave the details to the reader.

We now turn to the proof of the bound \eqref{eq01}. Assume that $N$ is a non-negative integer and $\nu$ is an arbitrary complex number satisfying $\left|\Re\left(\nu\right)\right|<N+\frac{1}{2}$. First note that since
\[
{\bf F}\left( {\nu  + \frac{1}{2}, - \nu  + \frac{1}{2};\frac{1}{2}; - t} \right) = \frac{{(\left( {1 + t} \right)^{\frac{1}{2}}  + t^{\frac{1}{2}} )^{2\nu }  + (\left( {1 + t} \right)^{\frac{1}{2}}  - t^{\frac{1}{2}} )^{2\nu } }}{{2\left( {1 + t} \right)^{\frac{1}{2}} }}
\]
for any $t>0$ (see, for instance, \cite[15.4.E13]{NIST}), the following inequality holds:
\begin{equation}\label{eq03}
\left| {{\bf F}\left( {\nu  + \frac{1}{2}, - \nu  + \frac{1}{2};\frac{1}{2}; - t} \right)} \right| \le {\bf F}\left( {\Re \left( \nu  \right) + \frac{1}{2}, - \Re \left( \nu  \right) + \frac{1}{2};\frac{1}{2}; - t} \right).
\end{equation}
Taking $\lambda=\frac{1}{2}$ in \eqref{eq9} and employing \eqref{eq03}, we find that
\begin{align*}
\big| {R_N^{\left( K \right)} \left( {z,\nu } \right)} \big| \le \; & \frac{{\left| {\cos \left( {\pi \nu } \right)} \right|}}{\pi }\frac{{\Gamma \left( {N + \frac{1}{2}} \right)}}{{2^N }}\frac{1}{{\left| z \right|^N }} \\ & \times \int_0^{ + \infty } {\frac{{t^{ - \frac{1}{2}} }}{{\left( {1 + t} \right)^{N + \frac{1}{2}} }}{\bf F}\left( {\Re \left( \nu  \right) + \frac{1}{2}, - \Re \left( \nu  \right) + \frac{1}{2};\frac{1}{2}; - t} \right)dt} \mathop {\sup }\limits_{r \geq 1} \big| {\Lambda _{N + \frac{1}{2}} \left( {2zr} \right)} \big|,
\end{align*}
as long as $\left| {\arg z} \right| < \frac{3\pi}{2}$. Now the required bound \eqref{eq01} can be obtained by simplifying the right-hand side of the above inequality using the representation \eqref{eq17} (with $\lambda=\frac{1}{2}$) for the coefficients $a_N \left( \nu  \right)$ (with $\Re \left( \nu  \right)$ in place of $\nu$). The estimate \eqref{eq02} can be deduced in essentially the same way from \eqref{eq54} with the specific choice of $\lambda=\frac{1}{2}$.

We close this section by proving the estimate \eqref{eq220}. For this purpose, suppose that $N$ is a positive integer and $\nu$ is an arbitrary complex number such that $\left|\Re\left(\nu\right)\right|<N-\frac{1}{2}$. By taking $\lambda=\frac{1}{2}$ in \eqref{eq9} and substituting the result into the functional relation \eqref{eq101}, we obtain
\begin{align*}
& R_N^{(K')} \left( {z,\nu } \right) = \left( { - 1} \right)^{N + 1} \frac{{\cos \left( {\pi \nu } \right)}}{\pi }\frac{{\Gamma \left( {N + \frac{1}{2}} \right)}}{{2^{N + 1} }}\frac{1}{{z^N }} \\ & \times \int_0^{ + \infty } {\frac{{t^{ - \frac{1}{2}} }}{{\left( {1 + t} \right)^{N + \frac{1}{2}} }}\left( {{\bf F}\left( {\nu  + \frac{3}{2}, - \nu  - \frac{1}{2};\frac{1}{2}; - t} \right) + {\bf F}\left( {\nu  - \frac{1}{2}, - \nu  + \frac{3}{2};\frac{1}{2}; - t} \right)} \right)\Lambda _{N + \frac{1}{2}} \left( {2z\left( {1 + t} \right)} \right)dt} .
\end{align*}
An application of the inequality \eqref{eq03} yields
\begin{align*}
& \big| {R_N^{(K')} \left( {z,\nu } \right)} \big| \le \frac{{\left| {\cos \left( {\pi \nu } \right)} \right|}}{\pi }\frac{{\Gamma \left( {N + \frac{1}{2}} \right)}}{{2^{N + 1} }}\frac{1}{{\left| z \right|^N }} \\ & \times \int_0^{ + \infty } {\frac{{t^{ - \frac{1}{2}} }}{{\left( {1 + t} \right)^{N + \frac{1}{2}} }}\left( {{\bf F}\left( {\Re \left( \nu  \right) + \frac{3}{2}, - \Re \left( \nu  \right) - \frac{1}{2};\frac{1}{2}; - t} \right) + {\bf F}\left( {\Re \left( \nu  \right) - \frac{1}{2}, - \Re \left( \nu  \right) + \frac{3}{2};\frac{1}{2}; - t} \right)} \right)dt} \\ & \times \mathop {\sup }\limits_{r \geq 1} \big| {\Lambda _{N + \frac{1}{2}} \left( {2zr } \right)} \big|,
\end{align*}
for $\left| {\arg z} \right| < \frac{3\pi}{2}$. We then simplify the right-hand side of this inequality using the representation \eqref{eq18} (with $\lambda=\frac{1}{2}$) for the coefficients $b_N \left( \nu  \right)$ (with $\Re \left( \nu  \right)$ in place of $\nu$) to arrive at the required estimate \eqref{eq220}. The bound \eqref{eq221} can be deduced in a similar manner from \eqref{eq54} and \eqref{eq102} with the value $\lambda=\frac{1}{2}$.

\section{Proof of the error bounds for the re-expansions}\label{section4}

In this section, we prove the bounds for the remainder terms $R_{N,M}^{\left( K \right)} \left( {z,\nu } \right)$, $R_{N,M}^{\left( J \right)} \left( {z,\nu } \right)$, $R_{N,M}^{(K')} \left( {z,\nu } \right)$ and $R_{N,M}^{(J')} \left( {z,\nu } \right)$ that are given in Theorems \ref{theorem10}--\ref{theorem13}. We begin by stating and proving the following lemma.

\begin{lemma} Let $N$ be a non-negative integer and let $\nu$ be an arbitrary complex number. Then
\begin{equation}\label{eq732}
a_N \left( \nu  \right) = \left( { - 1} \right)^N \left( {\frac{2}{\pi }} \right)^{\frac{1}{2}} \frac{{\cos \left( {\pi \nu } \right)}}{\pi }\int_0^{ + \infty } {t^{N - \frac{1}{2}} e^{ - t} K_\nu  \left( t \right)dt} ,
\end{equation}
provided $\left|\Re\left(\nu\right)\right|<N+\frac{1}{2}$, and
\begin{equation}\label{eq733}
b_N \left( \nu  \right) = \left( { - 1} \right)^N \left( {\frac{2}{\pi }} \right)^{\frac{1}{2}} \frac{{\cos \left( {\pi \nu } \right)}}{\pi }\int_0^{ + \infty } {t^{N - \frac{1}{2}} e^{ - t} K'_\nu  \left( t \right)dt} ,
\end{equation}
provided that $\left|\Re\left(\nu\right)\right|<N-\frac{1}{2}$ and $N \geq 1$.
\end{lemma}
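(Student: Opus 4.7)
The plan is to derive \eqref{eq732} by substituting Boyd's integral representation \eqref{eq1} into a telescoping identity that relates $a_N(\nu)$ to consecutive remainders, and then to deduce \eqref{eq733} from \eqref{eq732} via the recurrence \eqref{eq688}.

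First, from the defining relation \eqref{eq70} comparing indices $N$ and $N+1$, one obtains
\[
\frac{a_N(\nu)}{z^N} = R_N^{\left(K\right)}(z,\nu) - R_{N+1}^{\left(K\right)}(z,\nu),
\]
valid for any $z$ with $|\arg z|<\pi$ and any $\nu$. Inserting \eqref{eq1} into both terms on the right-hand side factors out a common prefactor $(-1)^N (2/\pi)^{1/2}\cos(\pi\nu)/\pi$, leaving a sum of two integrals. The hypothesis $|\Re(\nu)|<N+\tfrac{1}{2}$ guarantees that $t^{N-1/2}e^{-t}K_\nu(t)$ is integrable on $(0,+\infty)$ (since $K_\nu(t)=\mathcal{O}(t^{-|\Re(\nu)|})$ as $t\to 0+$ for $\nu\ne 0$, and $\mathcal{O}(\log(1/t))$ for $\nu=0$, while $K_\nu(t)=\mathcal{O}(t^{-1/2}e^{-t})$ at infinity), so the manipulations below are valid.

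Next, the algebraic simplification
\[
\frac{t^{N-\frac{1}{2}}}{1+t/z} + \frac{t^{N+\frac{1}{2}}/z}{1+t/z} = \frac{t^{N-\frac{1}{2}}(1+t/z)}{1+t/z} = t^{N-\frac{1}{2}}
\]
collapses the two integrals into a single $z$-independent integral, producing exactly the right-hand side of \eqref{eq732}. Since the left-hand side is also $z$-independent, the resulting identity holds unconditionally in $z$, as required.

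For \eqref{eq733}, the hypothesis $|\Re(\nu)|<N-\tfrac{1}{2}$ allows \eqref{eq732} to be applied with $\nu$ replaced by $\nu\pm 1$. Using $\cos(\pi(\nu\pm 1)) = -\cos(\pi\nu)$, summing, and applying the recurrence \eqref{eq688} in the form $2b_N(\nu) = a_N(\nu+1)+a_N(\nu-1)$, one obtains
\[
b_N(\nu) = -(-1)^N \left(\frac{2}{\pi}\right)^{\frac{1}{2}} \frac{\cos(\pi\nu)}{2\pi} \int_0^{+\infty} t^{N-\frac{1}{2}} e^{-t} \bigl(K_{\nu+1}(t)+K_{\nu-1}(t)\bigr)\,dt,
\]
and the connection formula $K_{\nu+1}(t)+K_{\nu-1}(t) = -2K'_\nu(t)$ yields \eqref{eq733}.

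There is no substantial obstacle: the derivation is essentially a one-line algebraic identity inside the integrand, and the analytic subtleties (convergence at the origin) are controlled by the explicit hypotheses on $\Re(\nu)$. The only point requiring care is the sign bookkeeping in the telescoping identity and in the shift $\nu\mapsto\nu\pm 1$, since the prefactors $(-1)^N$ and $\cos(\pi\nu)$ must combine to produce the stated sign in \eqref{eq733}.
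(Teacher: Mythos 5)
Your proof is correct and follows essentially the same route as the paper: substitute Boyd's representation \eqref{eq1} into the telescoping identity expressing $a_N(\nu)/z^N$ as a difference of consecutive remainders, then deduce \eqref{eq733} from \eqref{eq688} and the recurrence $-2K'_\nu(t)=K_{\nu+1}(t)+K_{\nu-1}(t)$. (Your sign convention $a_N(\nu)/z^N=R_N^{\left(K\right)}(z,\nu)-R_{N+1}^{\left(K\right)}(z,\nu)$ is the one that actually follows from \eqref{eq70} and yields the stated factor $(-1)^N$; the paper's displayed identity \eqref{eq07} lists the two remainders in the opposite order.)
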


\begin{proof}
Formula \eqref{eq732} can be derived by substituting the integral representation \eqref{eq1} into the right-hand side of the equality \eqref{eq07}. To prove \eqref{eq733}, we substitute \eqref{eq732} into \eqref{eq688} and use the functional relation $- 2K'_\nu  \left( t \right) = K_{\nu + 1} \left( t \right) + K_{\nu  - 1} \left( t \right)$.
\end{proof}

We continue with the proof of the estimate \eqref{eq08}. Let $N$ and $M$ be arbitrary non-negative integers satisfying $M < N$. Suppose that $\left|\Re\left(\nu\right)\right| < M+ \frac{1}{2}$ and $\left|\arg z\right|<\pi$. We begin by replacing the function $K_\nu\left(t\right)$ in the integral formula \eqref{eq1} for $R_N^{\left( K \right)} \left( {z,\nu } \right)$ by its truncated asymptotic expansion 
\begin{equation}\label{eq171}
K_\nu  \left( t \right) = \left( {\frac{\pi }{{2t}}} \right)^{\frac{1}{2}} e^{ - t} \left( {\sum\limits_{m = 0}^{M - 1} {\frac{{a_m \left( \nu  \right)}}{{t^m }}}  + R_M^{\left( K \right)} \left( {t,\nu } \right)} \right)
\end{equation}
and using the representation \eqref{eq355} of the basic terminant $\Lambda_p\left(w\right)$. In this way, we obtain \eqref{eq437} with
\begin{gather}\label{eq06}
\begin{split}
R_{N,M}^{\left( K \right)} \left( {z,\nu } \right) & = \left( { - 1} \right)^N \frac{{\cos \left( {\pi \nu } \right)}}{\pi }\frac{1}{{z^N }}\int_0^{ + \infty } {\frac{{t^{N - 1} e^{ - 2t} }}{{1 + t/z}}R_M^{\left( K \right)} \left( {t,\nu } \right)dt} 
\\ & = \left( { - 1} \right)^N \frac{{\cos \left( {\pi \nu } \right)}}{\pi }e^{ - iN\arg z} \int_0^{ + \infty } {\frac{{u^{N - 1} e^{ - 2\left| z \right|u} }}{{1 + ue^{ - i\arg z} }}R_M^{\left( K \right)} \left( {\left| z \right|u,\nu } \right)du} .
\end{split}
\end{gather}
When passing to the second equality, we have made a change of integration variable from $t$ to $u$ by $t=\left|z\right|u$. The remainder $R_M^{\left( K \right)} \left( {\left| z \right|u,\nu } \right)$ is given by the integral formula \eqref{eq1}, which can be re-expressed in the form
\begin{gather}\label{eq09}
\begin{split}
R_M^{\left( K \right)} \left( {\left|z\right|u ,\nu } \right) = \; & \left( { - 1} \right)^M \left( {\frac{2}{\pi }} \right)^{\frac{1}{2}} \frac{{\cos \left( {\pi \nu } \right)}}{\pi }\frac{1}{{\left( {\left|z\right|u } \right)^M }}\int_0^{ + \infty } {\frac{{t^{M - \frac{1}{2}} e^{ - t} }}{{1 + t/\left|z\right|}}K_\nu  \left( t \right)dt} 
\\ & + \left( { - 1} \right)^M \left( {\frac{2}{\pi }} \right)^{\frac{1}{2}} \frac{{\cos \left( {\pi \nu } \right)}}{\pi }\frac{{u  - 1}}{{\left( {\left|z\right|u } \right)^M }}\int_0^{ + \infty } {\frac{{t^{M - \frac{1}{2}} e^{ - t} }}{{\left( {1 + \left|z\right|u /t} \right)\left( {1 + t/\left|z\right|} \right)}}K_\nu  \left( t \right)dt} 
\\ = \; & u^{ - M} R_M^{\left( K \right)} \left( {\left| z \right|,\nu } \right) \\ & + \left( { - 1} \right)^M \left( {\frac{2}{\pi }} \right)^{\frac{1}{2}} \frac{{\cos \left( {\pi \nu } \right)}}{\pi }\frac{{u  - 1}}{{\left( {\left|z\right|u } \right)^M }}\int_0^{ + \infty } {\frac{{t^{M - \frac{1}{2}} e^{ - t} }}{{\left( {1 + \left|z\right|u /t} \right)\left( {1 +t/\left|z\right|} \right)}}K_\nu  \left( t \right)dt} .
\end{split}
\end{gather}
Noting that
\[
0 < \frac{1}{{\left( {1 + \left| z \right|u/t} \right)\left( {1 + t/\left| z \right|} \right)}} < 1
\]
for positive $u$ and $t$, the substitution of \eqref{eq09} into \eqref{eq06} and trivial estimation yield the upper bound
\begin{gather}\label{eq99}
\begin{split}
\big| {R_{N,M}^{\left( K \right)} \left( {z,\nu } \right)} \big| \le \; & \frac{{\left| {\cos \left( {\pi \nu } \right)} \right|}}{\pi }\left| {\int_0^{ + \infty } {\frac{{u^{N - M - 1} e^{ - 2\left|z\right|u } }}{{1 + u e^{ - i\arg z} }}du } } \right|\big| {R_M^{\left( K \right)} \left( {\left| z \right|,\nu } \right)} \big|
\\ & + \frac{{\left| {\cos \left( {\pi \nu } \right)} \right|}}{\pi }\frac{{\left| {\cos \left( {\pi \nu } \right)} \right|}}{{\left| {\cos \left( {\pi \Re \left( \nu  \right)} \right)} \right|}}\left| {a_M \left( {\Re \left( \nu  \right)} \right)} \right|\frac{1}{{\left| z \right|^M }}\int_0^{ + \infty } {u ^{N - M - 1} e^{ - 2\left|z\right|u } \left| {\frac{{u  - 1}}{{u  + e^{i\arg z } }}} \right|du } .
\end{split}
\end{gather}
In arriving at this bound, we have made use of the fact that $\left|K_\nu\left(t\right)\right|<K_{\Re\left(\nu\right)}\left(t\right)$ for any $t > 0$ and of the representation \eqref{eq732} for the coefficients $a_N \left( \nu  \right)$ (with $M$ and $\Re \left( \nu  \right)$ in place of $N$ and $\nu$). Since $\left| {\left( {u - 1} \right)/\left( {u + e^{i\arg z} } \right)} \right| \le 1$ for positive $u$, we find, after simplification, that
\begin{align*}
\big| {R_{N,M}^{\left( K \right)} \left( {z,\nu } \right)} \big| \le \; & \frac{{\left| {\cos \left( {\pi \nu } \right)} \right|}}{{2^N \pi }}\frac{1}{{\left| z \right|^N }}2^M \left| z \right|^M \big| {R_M^{\left( K \right)} \left( {\left|z\right|,\nu } \right)} \big| \Gamma \left( {N - M} \right) \left| {\Lambda _{N - M  } \left( {2z} \right)} \right| \\ & + \frac{{\left| {\cos \left( {\pi \nu } \right)} \right|}}{{2^N \pi }}\frac{1}{{\left| z \right|^N }}2^M \frac{{\left| {\cos \left( {\pi \nu } \right)} \right|}}{{\left| {\cos \left( {\pi \Re \left( \nu  \right)} \right)} \right|}}\left| {a_M \left( {\Re \left( \nu  \right)} \right)} \right|\Gamma \left( {N - M} \right).
\end{align*}
By continuity, this bound holds in the closed sector $\left|\arg z\right| \leq \pi$, and therefore the proof of the estimate \eqref{eq08} is complete.

To prove the bound \eqref{eq822}, we proceed in a similar manner. Let $N$ and $M$ be arbitrary non-negative integers satisfying $M < N$. Assume further that $\left|\Re\left(\nu\right)\right| < M+ \frac{1}{2}$ and $\left|\arg z\right|<\frac{\pi}{2}$. We replace the function $K_\nu\left(t\right)$ in the integral formula \eqref{eq91} for $R_N^{\left( J \right)} \left( {z,\nu } \right)$ by its truncated asymptotic expansion \eqref{eq171} and employ the representation \eqref{eq356} of the basic terminant $\Pi_p\left(w\right)$, to obtain \eqref{eq737} with
\begin{align*}
R_{N,M}^{\left( J \right)} \left( {z,\nu } \right) & = \left( { - 1} \right)^{\left\lfloor {N/2} \right\rfloor } \frac{{\cos \left( {\pi \nu } \right)}}{\pi }\frac{1}{{z^N }}\int_0^{ + \infty } {\frac{{t^{N - 1} e^{ - 2t} }}{{1 + \left( {t/z} \right)^2 }}R_M^{\left( K \right)} \left( {t,\nu } \right)dt} 
\\ & = \left( { - 1} \right)^{\left\lfloor {N/2} \right\rfloor } \frac{{\cos \left( {\pi \nu } \right)}}{\pi }e^{ - i N \arg z } \int_0^{ + \infty } {\frac{{u ^{N - 1} e^{ - 2\left|z\right|u } }}{{1 +u ^2 e^{ - 2i\arg z } }}R_M^{\left( K \right)} \left( {\left|z\right|u ,\nu } \right)du } .
\end{align*}
By an argument similar to that which led to \eqref{eq99}, we deduce the bound
\begin{align*}
\big| {R_{N,M}^{\left( J \right)} \left( {z,\nu } \right)} \big| \le \; & \frac{{\left| {\cos \left( {\pi \nu } \right)} \right|}}{\pi }\left| {\int_0^{ + \infty } {\frac{{u^{N - M - 1} e^{ - 2\left|z\right|u } }}{{1 + u^2 e^{ - 2i\arg z} }}du } } \right|\big| {R_M^{\left( K \right)} \left( {\left| z \right|,\nu } \right)} \big|
\\ & + \frac{{\left| {\cos \left( {\pi \nu } \right)} \right|}}{\pi }\frac{{\left| {\cos \left( {\pi \nu } \right)} \right|}}{{\left| {\cos \left( {\pi \Re \left( \nu  \right)} \right)} \right|}}\left| {a_M \left( {\Re \left( \nu  \right)} \right)} \right|\frac{1}{{\left| z \right|^M }}\int_0^{ + \infty } {u^{N - M - 1} e^{ - 2\left|z\right|u} \left| {\frac{{u  - 1}}{{u^2  + e^{2i\arg z } }}} \right|du } .
\end{align*}
Now, using the fact that $\left| {\left( {u - 1} \right)/\left( {u^2 + e^{2i\arg z} } \right)} \right| \le 1$ for any positive $u$, after simplification we arrive at
\begin{align*}
\big| {R_{N,M}^{\left( J \right)} \left( {z,\nu } \right)} \big| \le \; & \frac{{\left| {\cos \left( {\pi \nu } \right)} \right|}}{{2^N \pi }}\frac{1}{{\left| z \right|^N }}2^M \left| z \right|^M \big| {R_M^{\left( K \right)} \left( {\left| z \right|,\nu } \right)} \big|\Gamma \left( {N - M} \right)\left| {\Pi _{N - M} \left( {2z} \right)} \right| \\ & + \frac{{\left| {\cos \left( {\pi \nu } \right)} \right|}}{{2^N \pi }}\frac{1}{{\left| z \right|^N }}2^M \frac{{\left| {\cos \left( {\pi \nu } \right)} \right|}}{{\left| {\cos \left( {\pi \Re \left( \nu  \right)} \right)} \right|}}\left| {a_M \left( {\Re \left( \nu  \right)} \right)} \right|\Gamma \left( {N - M} \right).
\end{align*}
By continuity, this bound holds in the closed sector $\left|\arg z\right| \leq \frac{\pi}{2}$, and thus the proof of the estimate \eqref{eq822} is complete.

The corresponding bounds \eqref{eq841} and \eqref{eq842} for the remainder terms $R_{N,M}^{(K')} \left( {z,\nu } \right)$ and $R_{N,M}^{(J')} \left( {z,\nu } \right)$ can be proved in an analogous manner using the representations \eqref{eq843}, \eqref{eq92}, \eqref{eq93} and \eqref{eq733}. We leave the details to the reader.

We now turn to the proof of the expression \eqref{eq575}. Let $N$ and $M$ be arbitrary non-negative integers satisfying $M < N$. Suppose that $z$ is positive and $\nu$ is a real number such that $\left|\nu\right| < M+ \frac{1}{2}$. Applying the first formula in \eqref{eq711} (with $M$ and $t$ in place of $N$ and $z$) in the representation \eqref{eq06} for $R_{N,M}^{\left( K \right)} \left( {z,\nu } \right)$, and then using the mean value theorem of integration, we find that
\begin{align*}
R_{N,M}^{\left( K \right)} \left( {z,\nu } \right) & = \left( { - 1} \right)^N \frac{{\cos \left( {\pi \nu } \right)}}{\pi }\frac{1}{{z^N }}a_M \left( \nu  \right)\int_0^{ + \infty } {\frac{{t^{N - M - 1} e^{ - 2t} }}{{1 + z/t}}\theta _M^{\left( K \right)} \left( {t,\nu } \right)dt} 
\\ & = \left( { - 1} \right)^N \frac{{\cos \left( {\pi \nu } \right)}}{\pi }\frac{1}{{z^N }}a_M \left( \nu  \right)\Theta _{N,M}^{\left( K \right)} \left( {z,\nu } \right)\int_0^{ + \infty } {\frac{{t^{N - M - 1} e^{ - 2t} }}{{1 + z/t}}dt} .
\end{align*}
Here, $0<\Theta _{N,M}^{\left( K \right)} \left( {z,\nu } \right)<1$ is a suitable number that depends on $z, \nu$, $N$ and $M$. The integral in the second line can be expressed in terms of the basic terminant $\Lambda_p\left(w\right)$ by making use of the formula \eqref{eq355}. Hence the expression \eqref{eq575} follows.

The corresponding results \eqref{eq576}--\eqref{eq577} for $R_{N,M}^{\left( J \right)} \left( {z,\nu } \right)$, $R_{N,M}^{(K')} \left( {z,\nu } \right)$ and $R_{N,M}^{(J')} \left( {z,\nu } \right)$ can be obtained using the formulae \eqref{eq711}, \eqref{eq321} and the analogues of the representation \eqref{eq06}.

\section{Discussion}\label{section5}

In this paper, we have derived new integral representations and estimates for the remainder terms of the large-argument asymptotic expansions of the Hankel, Bessel and modified Bessel functions, and their derivatives. We have also constructed error bounds for the re-expansions of these remainders. In this section, we shall discuss the sharpness of our error bounds and their relation to other results in the literature.

First, we show that the error bounds proved in this paper are reasonably sharp. For the sake of brevity, we consider only the bounds for ${R_N^{\left( K \right)} \left( {z,\nu } \right)}$ and $R_{N,M}^{\left( K \right)} \left( {z,\nu } \right)$; the other remainder terms can be treated in a similar manner.

Let $N$ be any non-negative integer, $\nu$ a real number and $z$ a complex number. Suppose that $\left|\nu\right|<N+\frac{1}{2}$ and $\left|\arg z\right|\leq \frac{\pi}{2}$. Under these assumptions, it follows from Theorem \ref{theorem3} and Proposition \ref{propb1} that
\begin{equation}\label{eq558}
\big| {R_N^{\left( K \right)} \left( {z,\nu } \right)} \big| \le \frac{{\left| {a_N \left( \nu  \right)} \right|}}{{\left| z \right|^N }}.
\end{equation}
By the definition of an asymptotic expansion, $\lim _{z \to \infty } \left| z \right|^N \big| {R_N^{\left( K \right)} \left( {z,\nu } \right)} \big| = \left| {a_N \left( \nu  \right)} \right|$ for any fixed $N\geq 0$. Therefore, when $\left|\arg z\right|\leq \frac{\pi}{2}$, the estimate \eqref{eq558} and hence our error bound \eqref{eq24} cannot be improved in general.

Consider now the case when $\frac{\pi}{2}<\left|\arg z\right|\leq \pi$ ($\nu$ is still real and satisfies $\left|\nu\right|<N+\frac{1}{2}$). Combining Theorem \ref{theorem3} (or Theorem \ref{theorem6}) with Propositions \ref{propb1} and \ref{propb3}, we deduce that
\begin{equation}\label{eq662}
\big| {R_N^{\left( K \right)} \left( {z,\nu } \right)} \big| \le \frac{{\left| {a_N \left( \nu  \right)} \right|}}{{\left| z \right|^N }} \times
\min \left( {\left| {\csc \left( {\arg z} \right)} \right|,1 + \chi \left( {N + \frac{1}{2}} \right)} \right).
\end{equation}
Here, following Olver \cite{Olver1}, we use the notation
\begin{equation}\label{eq611}
\chi \left( p \right) = \pi ^{\frac{1}{2}} \frac{{\Gamma \left( {\frac{p}{2} + 1} \right)}}{{\Gamma \left( {\frac{p}{2} + \frac{1}{2}} \right)}},
\end{equation}
for any $p > 0$. The bound \eqref{eq662} is reasonably sharp as long as $\left| {\csc \left( {\arg z} \right)} \right|$ is not very large, i.e., when $\left|\arg z\right|$ is bounded away from $\pi$. As $\left|\arg z\right|$ approaches $\pi$, the factor $\left| {\csc \left( {\arg z} \right)} \right|$ grows indefinitely and therefore it has to be replaced by $1 + \chi \left( {N + \frac{1}{2}} \right)$. By Stirling's formula, $\chi \left( {N + \frac{1}{2}} \right) \sim \left( {\frac{{\pi N}}{2}} \right)^{\frac{1}{2}}$ as $N \to +\infty$, and therefore the appearance of this factor in the bound \eqref{eq662} may give the impression that this estimate is unrealistic for large $N$. However, this is not the case, as the following argument shows. We may suppose, without loss of generality, that $2\nu$ is not equal to an odd integer, because otherwise, the remainder term ${R_N^{\left( K \right)} \left( {z,\nu } \right)}$ becomes identically zero, since $\left|\nu\right|<N+\frac{1}{2}$. Recently, Paris \cite{Paris} showed that if $a$ and $a-b+1$ are fixed and neither of them is a non-positive integer, then the confluent hypergeometric function satisfies
\begin{gather}\label{eq044}
\begin{split}
U(a,b,xe^{ \pm \pi i} ) = \; & \frac{{(xe^{ \pm \pi i} )^{ - a} }}{{\Gamma \left( a \right)\Gamma \left( {a - b + 1} \right)}}\sum\limits_{n = 0}^{N - 1} {\frac{{\Gamma \left( {a + n} \right)\Gamma \left( {a - b + 1 + n} \right)}}{{\Gamma \left( {n + 1} \right)x^n }}} \\ & \pm \frac{{\pi ie^{ \mp \pi ia} x^{a - b} e^{ - x} }}{{\Gamma \left( a \right)\Gamma \left( {a - b + 1} \right)}}\left( {1 + \mathcal{O}_{a,b}\left( {\frac{1}{{x^{1/2} }}} \right)} \right)
\end{split}
\end{gather}
as $x\to +\infty$, provided that $\left|N - x\right|$ remains bounded. In the following, we assume that $\nu$ is fixed, $\left|N - 2x\right|$ is bounded and $x$ is large and positive. Employing the expansion \eqref{eq044} for the right-hand side of the functional relation
\begin{equation}\label{eq092}
K_\nu  \left( z \right) = \pi^{\frac{1}{2}} e^{ - z} \left( {2z} \right)^\nu  U\left( \nu  + \tfrac{1}{2},2\nu  + 1,2z \right)
\end{equation}
(see, e.g., \cite[13.6.E10]{NIST}) and comparing the result with \eqref{eq70}, we obtain
\begin{equation}\label{eq091}
\big| {R_N^{\left( K \right)} (xe^{ \pm \pi i} ,\nu )} \big| = \left| {\cos \left( {\pi \nu } \right)} \right|e^{ - 2x} \left( {1 + \mathcal{O}_{\nu}\left( {\frac{1}{{x^{1/2} }}} \right)} \right).
\end{equation}
From \eqref{eq662}, we have
\begin{equation}\label{eq090}
\big| {R_N^{\left( K \right)} (xe^{ \pm \pi i} ,\nu )} \big| \le \left( {1 + \chi \left( {N + \frac{1}{2}} \right)} \right)\frac{{\left| {a_N \left( \nu  \right)} \right|}}{{x^N }}.
\end{equation}
Now, using our assumptions on $N$, $\nu$ and $z$, we have, by Stirling's formula,
\begin{equation}\label{eq088}
\left| {a_N \left( \nu  \right)} \right| = \frac{{\left| {\cos \left( {\pi \nu } \right)} \right|}}{\pi }\left( {\frac{N}{{2e}}} \right)^N \left( {\frac{{2\pi }}{N}} \right)^{\frac{1}{2}} \left( {1 + \mathcal{O}_\nu  \left( {\frac{1}{N}} \right)} \right) = \frac{{\left| {\cos \left( {\pi \nu } \right)} \right|}}{{\pi ^{\frac{1}{2}} }}x^{N - \frac{1}{2}} e^{ - 2x} \left( {1 + \mathcal{O}_\nu  \left( {\frac{1}{x}} \right)} \right).
\end{equation}
Similarly,
\begin{gather}\label{eq089}
\begin{split}
1 + \chi \left( {N + \frac{1}{2}} \right) & = 1 + \left( {\frac{{\pi N}}{2}} \right)^{\frac{1}{2}}  \left( {1 + \mathcal{O}\left( {\frac{1}{N}} \right)} \right) \\ & = \left( {\frac{{\pi N}}{2}} \right)^{\frac{1}{2}} \left( {1 + \mathcal{O}\left( {\frac{1}{{N^{1/2} }}} \right)} \right) = \left( {\pi x} \right)^{\frac{1}{2}} \left( {1 + \mathcal{O}\left( {\frac{1}{{x^{1/2} }}} \right)} \right).
\end{split}
\end{gather}
Combining \eqref{eq088} and \eqref{eq089}, we see that the upper bound in \eqref{eq090} is asymptotically equal to the right-hand side of the equality \eqref{eq091}. Consequently, when $\left|\arg z\right|$ is equal or close to $\pi$, the estimate \eqref{eq662} and thus the error bound \eqref{eq24} cannot be improved in general.

Finally, assume that $\pi  < \left| {\arg z} \right| < \frac{3\pi}{2}$, $\nu$ is real and $\left|\nu\right|<N+\frac{1}{2}$. If we combine Theorem \ref{theorem3} (or Theorem \ref{theorem6}) with Propositions \ref{propb3} and \ref{propb4}, we obtain the estimate
\begin{equation}\label{eq023}
\big| {R_N^{\left( K \right)} \left( {z,\nu } \right)} \big| \le \frac{{\left| {a_N \left( \nu  \right)} \right|}}{{\left| z \right|^N }} 
\left( {\frac{{\sqrt {\pi \left( {2N + 1} \right)} }}{{\left| {\cos \left( {\arg z} \right)} \right|^{N + \frac{1}{2}} }} + 1+\chi \left( {N + \frac{1}{2}} \right) } \right).
\end{equation}
Elementary analysis shows that the second factor on the right-hand side of \eqref{eq023}, as a function of $N$, remains bounded, provided that $\left| {\arg z} \right| - \pi  = \mathcal{O}\big( {N^{ - \frac{1}{2}} } \big)$. This gives a reasonable estimate for the remainder term $R_N^{\left( K \right)} \left( {z,\nu } \right)$. Otherwise, $\left| {\cos \left( {\arg z} \right)} \right|^{N + \frac{1}{2}}$ can take very small values when $N$ is large, making the bound \eqref{eq023} completely unrealistic in most of the sectors $\pi  < \left| {\arg z} \right| < \frac{3\pi}{2}$. This deficiency of the bound \eqref{eq023} (and \eqref{eq24}) is necessary and is due to the omission of certain exponentially small terms arising from the Stokes phenomenon related to the asymptotic expansion \eqref{eq88} of the modified Bessel function (for a detailed discussion, see \cite{Olver4}). Thus, the use of the asymptotic expansion \eqref{eq88} should be confined to the sector $\left|\arg z\right|\leq \pi$. For other ranges of $\arg z$, one should use the analytic continuation formulae for the modified Bessel function \cite[10.34.E3]{NIST}.

The bound \eqref{eq24} and the bound \eqref{eq01} which we discuss below and which covers the case of complex $\nu$ differ significantly only in the component that depends on $\nu$. Therefore, when analyzing the estimate \eqref{eq01} we can focus mainly on understanding the relationship between the quantities $\frac{{\left| {\cos \left( {\pi \nu } \right)} \right|}}{{\left| {\cos \left( {\pi \Re \left( \nu  \right)} \right)} \right|}}\left| {a_N \left( {\Re \left( \nu  \right)} \right)} \right|$ and $\left| {a_N \left( \nu  \right)} \right|$. With the aid of the definition \eqref{eq612}, one readily infers that
\begin{equation}\label{eq483}
\frac{{\left| {\cos \left( {\pi \nu } \right)} \right|}}{{\left| {\cos \left( {\pi \Re \left( \nu  \right)} \right)} \right|}}\left| {a_N \left( {\Re \left( \nu  \right)} \right)} \right| = \frac{{\Gamma \left( {N + \frac{1}{2} + \Re \left( \nu  \right)} \right)\Gamma \left( {N + \frac{1}{2} - \Re \left( \nu \right)} \right)}}{{\left| {\Gamma \left( {N + \frac{1}{2} + \nu } \right)\Gamma \left( {N + \frac{1}{2} - \nu } \right)} \right|}}\left| {a_N \left( \nu  \right)} \right|.
\end{equation}
Now, we make the assumptions that $N - \left| {\Re \left( \nu  \right)} \right| \to  + \infty$ and $\left| {\Im \left( \nu  \right)} \right| = o(N^{\frac{1}{2}} )$ as $N\to +\infty$. With these provisos, it can easily be shown, using for example Stirling's formula, that the quotient of gamma functions in \eqref{eq483} is asymptotically $1$ for large $N$. Consequently, the right-hand side of the inequality \eqref{eq01} is asymptotic to
\[
\frac{{\left| {a_N \left( \nu  \right)} \right|}}{{\left| z \right|^N }}\mathop {\sup }\limits_{r \ge 1} \big| {\Lambda _{N + \frac{1}{2}} \left( {2zr} \right)} \big|
\]
if $N$ is large. Now, using the same argument as in the case of real $\nu$ above, we can conclude that the bound \eqref{eq01} is sharp in general (at least when $z$ is restricted to the sector $\left|\arg z\right|\leq \pi$). If the assumption $\left| {\Im \left( \nu  \right)} \right| = o(N^{\frac{1}{2}} )$ is replaced by the weaker condition $\left| {\Im \left( \nu  \right)} \right| = \mathcal{O}(N^{\frac{1}{2}} )$, the quotient in \eqref{eq483} is still bounded and hence, the estimate \eqref{eq01} is still reasonably sharp. Otherwise, if $N - \left| {\Re \left( \nu  \right)} \right|$ is small and $\left| \Im\left(\nu\right)  \right|$ is much larger than $N^{\frac{1}{2}}$, this quotient may grow exponentially fast in $\left| \Im\left(\nu\right)  \right|$, which can make the bound \eqref{eq01} completely unrealistic. Note, however, that in practical applications, it is reasonable to truncate the asymptotic expansion \eqref{eq88} just before its numerically least term, i.e., when $N \approx 2\left| z \right|$. And so, since the expansion \eqref{eq88} is valid only when $\nu ^2  = o\left( {\left| z \right|} \right)$, the conditions $N - \left| {\Re \left( \nu  \right)} \right| \to  + \infty$ and $\left| {\Im \left( \nu  \right)} \right| = o(N^{\frac{1}{2}} )$ are automatically satisfied in this case.

Let us now turn to the analysis of the estimate \eqref{eq08} for $R_{N,M}^{\left( K \right)} \left( {z,\nu } \right)$. When $z$ is large, $\left| z \right|^M \big| {R_M^{\left( K \right)} \left( {\left| z \right|,\nu } \right)} \big| \lesssim \left| {a_M \left( \nu  \right)} \right|$ holds, and therefore the first term on the right-hand side of the inequality \eqref{eq08} is of the same order of magnitude as the first neglected term in the expansion \eqref{eq437}. It can be shown that, when $N - M$ is large, the second term is comparable with, or less than, the first term (except near the zeros of $\Lambda_{N-M}\left(2z\right)$). The proof of this fact is identical to the proof given by Boyd \cite{Boyd} for the case when $\nu$ is real and $\left|\nu\right|<\frac{1}{2}$ and it is therefore not pursued here. In summary, the bound \eqref{eq08} is comparable with the first neglected term in the expansion \eqref{eq437}, unless $z$ is close to a zero of $\Lambda_{N-M}\left(2z\right)$, and thus, this bound is reasonably sharp. We would like to remark that an estimate for $R_{N,M}^{\left( K \right)} \left( {z,\nu } \right)$, different from \eqref{eq08}, can be derived using the functional relation \eqref{eq092} and an error bound for the re-expansion of the remainder term in the large-$z$ asymptotic expansion of the confluent hypergeometric function $U\left(a,b,z\right)$ which is due to Olver \cite{Olver3}. Since the resulting estimate is significantly more complicated than \eqref{eq08}, we omit the details.

There are two major methods in the literature for obtaining bounds for the remainder terms $R_N^{\left( K \right)} \left( {z,\nu } \right)$ and $R_N^{\left( J \right)} \left( {z,\nu } \right)$: one is based on integral representations, while the other uses differential equations. The former approach was taken by Schl\"{a}fli \cite{Schlafli}, Watson \cite[pp. 209--210]{Watson} and Meijer \cite{Meijer}. The bounds proved by Schl\"{a}fli and Meijer were later examined by D\"{o}ring \cite{Doring}, in the case of real $\nu$, who effected simplifications to make them more easily computable. These results of the authors can all be deduced as direct consequences of Theorem \ref{theorem6} and Propositions \ref{propb1} and \ref{propb2}.

Weber \cite{Weber} and Olver \cite{Olver1} used differential equation methods to derive different types of estimates for the remainder $R_N^{\left( K \right)} \left( {z,\nu } \right)$. Since Olver's bounds are more general and sharper than those found by Weber, we consider only Olver's estimates below. First, we state his results and then compare them with our estimates.

Let $z$ and $\nu$ be arbitrary complex numbers such that $\left|\arg z\right| < \frac{3\pi}{2}$. Then Olver's bounds read as follows:
\begin{equation}\label{eq776}
\big| {R_0^{\left( K \right)} \left( {z,\nu } \right)} \big| \le \exp \left( {\left| {\nu ^2  - \tfrac{1}{4}} \right|\mathscr{V}_{z,\infty } \left( {t^{ - 1} } \right)} \right)
\end{equation}
(cf. \cite[exer. 13.4, p. 270]{Olver5}) and
\begin{equation}\label{eq777}
\big| {R_N^{\left( K \right)} \left( {z,\nu } \right)} \big| \le 2\left| {a_N \left( \nu  \right)} \right|\mathscr{V}_{z,\infty } \left( {t^{ - N} } \right)\exp \left( {\left| {\nu ^2  - \tfrac{1}{4}} \right|\mathscr{V}_{z,\infty } \left( {t^{ - 1} } \right)} \right)
\end{equation}
for $N\geq 1$ (cf. \cite[10.40.E11]{NIST} or \cite[eq. 13.02, p. 267]{Olver5}). In addition, when $z$ is positive and $\nu$ is real, the factor of $2$ on the right-hand side of \eqref{eq777} can be omitted (cf. \cite[Remark (c), p. 266]{Olver5}). In \eqref{eq776} and \eqref{eq777}, $\mathscr{V}$ denotes the variational operator, in particular $\mathscr{V}_{z,\infty } \left( {t^{ - N} } \right)=N\int_z^\infty  {\left| t \right|^{ - N - 1} \left| {dt} \right|}$ and the path of variation is subject to the condition that $\left|\Re\left(t\right)\right|$ changes monotonically. The following are simple bounds for $\mathscr{V}_{z,\infty } \left( {t^{ - N} } \right)$:
\begin{equation}\label{eq778}
\mathscr{V}_{z,\infty } \left( {t^{ - N} } \right) \le \begin{cases} \left|z\right|^{-N} & \text{ if } \; \left|\arg z\right| \leq \frac{\pi}{2}, \\ \chi\left(N\right)\left|z\right|^{-N} & \text{ if } \; \frac{\pi}{2} < \left|\arg z\right| \leq \pi, \\ 2\chi\left(N\right)\left|z \cos\left(\arg z\right)\right|^{-N} & \text{ if } \; \pi < \left|\arg z\right| < \frac{3\pi}{2},\end{cases}
\end{equation}
where $\chi\left(N\right)$ is given by \eqref{eq611} with $p=N$ (see, for instance, \cite[10.40.E12]{NIST} or \cite[pp. 224--227]{Olver5}).

Corresponding estimates for the error term $R_N^{\left( J \right)} \left( {z,\nu } \right)$ can be obtained by combining \eqref{eq776} and \eqref{eq777} with the inequality
\[
\big| {R_N^{\left( J \right)} \left( {z,\nu } \right)} \big| \le \frac{1}{2}\left( {\big| {R_N^{\left( K \right)} \left( {ze^{\frac{\pi }{2}i} ,\nu } \right)} \big| + \big| {R_N^{\left( K \right)} \left( {ze^{ - \frac{\pi }{2}i} ,\nu } \right)} \big|} \right),
\]
which follows from \eqref{eq56} and \eqref{eq57}.

Now we turn to the comparison of Olver's bounds with ours. Note that, unlike our error bounds, Olver's results hold without any restrictions on $\left| {\Re \left( \nu  \right)} \right|$. For the purpose of further comparison, we first combine \eqref{eq777} with \eqref{eq778} to obtain
\begin{equation}\label{eq779}
\big| {R_N^{\left( K \right)} \left( {z,\nu } \right)} \big| \le \frac{\left| {a_N \left( \nu  \right)} \right|}{\left|z\right|^N} \times \begin{cases} 2\exp \bigg( {\cfrac{\left| {\nu ^2  - \frac{1}{4}} \right|}{\left|z\right|}} \bigg) & \text{ if } \; \left|\arg z\right| \leq \frac{\pi}{2}, \\ 2\chi\left(N\right) \exp \bigg( { \cfrac{\pi}{2} \cfrac{\left| {\nu ^2  - \frac{1}{4}} \right|}{\left|z\right|}} \bigg) & \text{ if } \; \frac{\pi}{2} < \left|\arg z\right| \leq \pi, \\ \cfrac{4\chi\left(N\right)}{\left|\cos\left(\arg z\right)\right|^{N}} \exp \bigg( { \pi \cfrac{\left| {\nu ^2  - \frac{1}{4}} \right|}{\left|z\cos\left(\arg z\right)\right|}} \bigg) & \text{ if } \; \pi < \left|\arg z\right| < \frac{3\pi}{2}\end{cases}
\end{equation}
(cf. \cite[eq. 7.15]{Olver1} or \cite[exer. 13.2, p. 269]{Olver5}). It is immediate from \eqref{eq558}, \eqref{eq662} and \eqref{eq023} that if $\nu$ is real, our bound \eqref{eq24} (in its region of validity, i.e., when $\left|\nu\right|<N+\frac{1}{2}$) is about twice as sharp as \eqref{eq779}. When $\nu$ is complex and satisfies the assumptions made after \eqref{eq483}, our bound \eqref{eq01} is again better than \eqref{eq779} for large $N$. In particular, if the asymptotic expansion \eqref{eq88} is truncated at or near its numerically least term (i.e., when $N \approx 2\left| z \right|$) and $\nu$ satisfies $\nu ^2  = o\left( {\left| z \right|} \right)$, then the estimate \eqref{eq01} is sharper than \eqref{eq779} for large $z$ (or, equivalently, for large $N$). In any other situations, the estimates \eqref{eq779}, and hence Olver's \eqref{eq776} and \eqref{eq777}, are preferable over \eqref{eq24} or \eqref{eq01}.

In Figures \ref{fig1} and \ref{fig2}, we compare numerically Olver's estimate \eqref{eq779} with the following bound which follows by combining Theorem \ref{theorem6} with Propositions \ref{propb1} and \ref{propb3}:
\begin{equation}\label{eq780}
\big| {R_N^{\left( K \right)} \left( {z,\nu } \right)} \big| \le \frac{{\left| {\cos \left( {\pi \nu } \right)} \right|}}{{\left| {\cos \left( {\pi \Re \left( \nu  \right)} \right)} \right|}}\frac{{\left| {a_N \left( {\Re \left( \nu  \right)} \right)} \right|}}{{\left| z \right|^N }} \times \begin{cases} 1 & \text{ if } \; \left|\arg z\right| \leq \frac{\pi}{2}, \\ \min \left( {\left|\csc \left( {\arg z} \right)\right|,1+\chi \left( {N + \frac{1}{2}} \right) } \right) & \text{ if } \; \frac{\pi}{2} < \left|\arg z\right| \leq \pi.\end{cases}
\end{equation}
The numerical comparison is made for the specific values of $\nu = 2+i$, $\left|z\right|=15$, $0 \leq \arg z \leq \pi$ and with the two different values $N=10$ and $N=30$. Figure \ref{fig2} also illustrates the significance of the factor $\min \left( {\left| {\csc \left( {\arg z} \right)} \right|,1 + \chi \left( {N + \frac{1}{2}} \right)} \right)$ at the optimal truncation $N \approx 2 \left|z\right|$.

It would also be interesting to find bounds for the remainder terms $R_N^{(K')} \left( {z,\nu } \right)$ and $R_N^{(J')} \left( {z,\nu } \right)$ analogous to those \eqref{eq776} and \eqref{eq777}. We leave this problem for future research.

\begin{figure}[!t]
\centering
\def\svgwidth{0.6\textwidth}
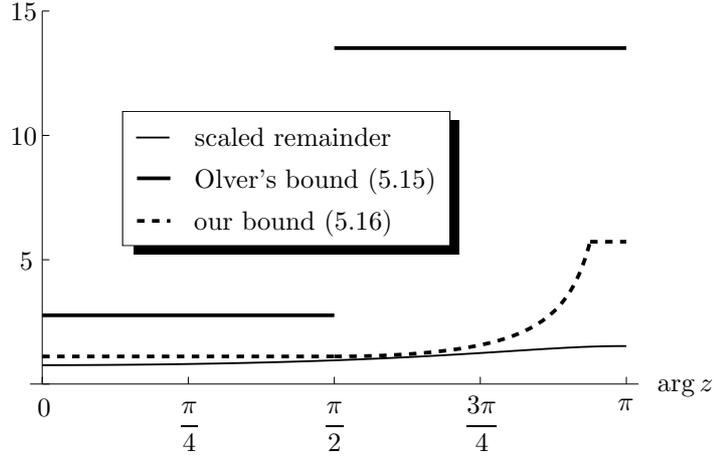
\setlength{\abovecaptionskip}{15pt}\setlength{\belowcaptionskip}{0pt}
\caption{Numerical comparison of different bounds for the scaled remainder term $\big| {R_N^{\left( K \right)} \left( {z,\nu } \right)} \big| / \frac{\left| {a_N \left( \nu  \right)} \right|}{\left|z\right|^N}$ with $N=10$, $\nu=2+i$, $|z|=15$ and $0\leq \arg z\leq \pi$.}
\label{fig1}
\end{figure}

\begin{figure}[!t]
\centering
\def\svgwidth{0.6\textwidth}
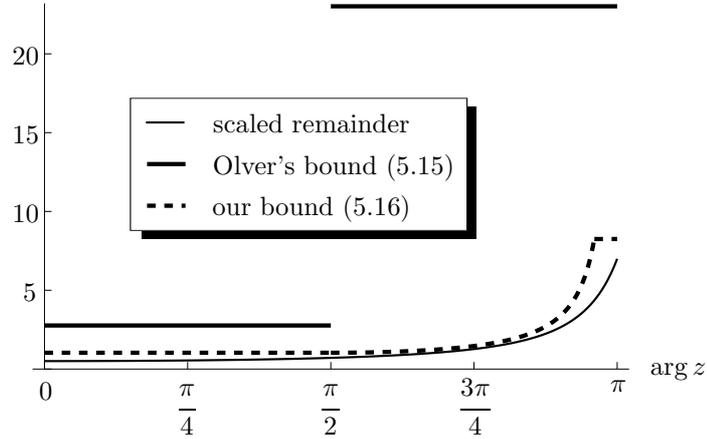
\setlength{\abovecaptionskip}{15pt}\setlength{\belowcaptionskip}{0pt}
\caption{Numerical comparison of different bounds for the scaled remainder term $\big| {R_N^{\left( K \right)} \left( {z,\nu } \right)} \big| / \frac{\left| {a_N \left( \nu  \right)} \right|}{\left|z\right|^N}$ with $N=30$, $\nu=2+i$, $|z|=15$ and $0\leq \arg z\leq \pi$.}
\label{fig2}
\end{figure}

\section*{Acknowledgement} 

The author's research was supported by a research grant (GRANT11863412/70NANB15H221) from the National Institute of Standards and Technology. The author greatly appreciates the help of Dorottya Szir\'{a}ki in improving the presentation of the paper. The author thanks the anonymous referees for their helpful comments and suggestions on the manuscript.

\appendix

\section{Relations between the various remainder terms}\label{appendixa}

In this appendix, we show how the remainder terms of the asymptotic expansions of the functions $H_\nu ^{\left( 1,2 \right)} \left( z \right)$, $H_\nu ^{\left( 1,2 \right)\prime} \left( z \right)$, $Y_\nu \left( z \right)$, $Y'_\nu\left( z \right)$, $I_\nu\left( z \right)$ and $I'_\nu\left( z \right)$ may be expressed in terms of the remainders $R_N^{\left( K \right)} \left( {z,\nu } \right)$, $R_N^{\left( J \right)} \left( {z,\nu } \right)$, $R_N^{(K')} \left( {z,\nu } \right)$ and $R_{N}^{(J')} \left( {z,\nu } \right)$ of the asymptotic expansions of $K_\nu \left( z \right)$, $J_\nu \left( z \right)$, $K'_\nu \left( z \right)$ and $J'_\nu \left( z \right)$.

Let us consider first the Hankel functions. These functions are directly related to the modified Bessel function $K_\nu \left( z \right)$ through the connection formulae
\[
H_\nu ^{\left( 1 \right)} \left( z \right) = \frac{2}{{\pi i}}e^{ - \frac{\pi }{2}i\nu } K_\nu  ( ze^{ - \frac{\pi }{2}i} ),\quad  - \frac{\pi }{2} \le \arg z \le \pi
\]
and
\[
H_\nu ^{\left( 2 \right)} \left( z \right) =  - \frac{2}{{\pi i}}e^{\frac{\pi }{2}i\nu } K_\nu  ( ze^{\frac{\pi }{2}i} ),\quad  - \pi  \le \arg z \le \frac{\pi }{2}
\]
(see, e.g., \cite[10.27.E8]{NIST}). We substitute \eqref{eq70} into the right-hand sides and match the notation with those of \eqref{eq94} and \eqref{eq95} in order to obtain
\begin{equation}\label{eq055}
H_\nu ^{\left( 1 \right)} \left( z \right) = \left( {\frac{2}{{\pi z}}} \right)^{\frac{1}{2}} e^{i\omega } \left( {\sum\limits_{n = 0}^{N - 1} {i^n \frac{{a_n \left( \nu  \right)}}{{z^n }}}  + R_N^{\left( K \right)} (ze^{ - \frac{\pi }{2}i} ,\nu )} \right)
\end{equation}
and
\begin{equation}\label{eq056}
H_\nu ^{\left( 2 \right)} \left( z \right) = \left( {\frac{2}{{\pi z}}} \right)^{\frac{1}{2}} e^{ - i\omega } \left( {\sum\limits_{n = 0}^{N - 1} {\left( { - i} \right)^n \frac{{a_n \left( \nu  \right)}}{{z^n }}}  + R_N^{\left( K \right)} (ze^{\frac{\pi }{2}i} ,\nu )} \right).
\end{equation}
The restrictions on $\arg z$ may now be removed by appealing to analytic continuation.

The corresponding expressions for the derivatives can most readily be obtained by first substituting \eqref{eq055} and \eqref{eq056} into the right-hand sides of the connection formulae $-2H_\nu^{\left(1\right)\prime}\left(z\right) = H_{\nu+1}^{\left(1\right)}\left(z\right)-H_{\nu-1}^{\left(1\right)}\left(z\right)$ and $-2H_\nu^{\left(2\right)\prime}\left(z\right) = H_{\nu+1}^{\left(2\right)}\left(z\right)-H_{\nu-1}^{\left(2\right)}\left(z\right)$ (cf. \cite[10.6.E1]{NIST}). Then employing the relations \eqref{eq101} and \eqref{eq688}, and taking into account the notation of \eqref{eq060} and \eqref{eq061}, we deduce
\[
H_\nu ^{\left( 1 \right)\prime} \left( z \right) = i\left( {\frac{2}{{\pi z}}} \right)^{\frac{1}{2}} e^{i\omega } \left( {\sum\limits_{n = 0}^{N - 1} {i^n \frac{{b_n \left( \nu  \right)}}{{z^n }}}  + R_N^{(K')} ( ze^{ - \frac{\pi}{2}i} ,\nu )} \right)
\]
and
\[
H_\nu ^{\left( 2 \right)\prime} \left( z \right) =  - i\left( {\frac{2}{{\pi z}}} \right)^{\frac{1}{2}} e^{ - i\omega } \left( {\sum\limits_{n = 0}^{N - 1} {\left( { - i} \right)^n \frac{{b_n \left( \nu  \right)}}{{z^n }}}  + R_N^{(K')} ( ze^{\frac{\pi}{2}i} ,\nu )} \right).
\]

To identify the remainder terms in the asymptotic expansion \eqref{eq80} of the Bessel function $Y_\nu  \left( z \right)$, we may proceed as follows. This function is related to the modified Bessel function $K_\nu  \left( z \right)$ via the connection formula
\begin{align*}
-\pi Y_\nu  \left( z \right) = \; & e^{ - \frac{\pi }{2}i\nu } K_\nu  \left( {ze^{ - \frac{\pi }{2}i} } \right) + e^{\frac{\pi }{2}i\nu } K_\nu  \left( {ze^{\frac{\pi }{2}i} } \right)\\
= \; & i\sin \omega \left( {e^{\frac{\pi }{4}i} e^{iz} K_\nu  \left( {ze^{\frac{\pi }{2}i} } \right) + e^{ - \frac{\pi }{4}i} e^{ - iz} K_\nu  \left( {ze^{ - \frac{\pi }{2}i} } \right)} \right) \\ & - \cos \omega \left( {e^{\frac{\pi }{4}i} e^{iz} K_\nu  \left( {ze^{\frac{\pi }{2}i} } \right) - e^{ - \frac{\pi }{4}i} e^{ - iz} K_\nu  \left( {ze^{ - \frac{\pi }{2}i} } \right)} \right),
\end{align*}
with $\left| {\arg z} \right| \leq \frac{\pi }{2}$ \cite[10.27.E10]{NIST}. If we now compare this relation with \eqref{eq080} and \eqref{eq081}, we readily establish that
\begin{gather}\label{eq488}
\begin{split}
Y_\nu  \left( z \right) & = \left( {\frac{2}{{\pi z}}} \right)^{\frac{1}{2}} \left( \sin \omega \left( {\sum\limits_{n = 0}^{N - 1} {\left( { - 1} \right)^n \frac{{a_{2n} \left( \nu  \right)}}{{z^{2n} }}}  + R_{2N}^{\left( J \right)} \left( {z,\nu } \right)} \right) \right. \\  & \hspace{130pt} \left.+ \cos \omega \left( {\sum\limits_{m = 0}^{M - 1} {\left( { - 1} \right)^m \frac{{a_{2m + 1} \left( \nu  \right)}}{{z^{2m + 1} }}}  - R_{2M + 1}^{\left( J \right)} \left( {z,\nu } \right)} \right) \right).
\end{split}
\end{gather}
We can now remove the restriction on $\arg z$ using analytic continuation.

Consider now the derivative $Y'_\nu  \left( z \right)$. The simplest way to derive the required expression is by substituting the formula \eqref{eq488} into the connection formula $-2Y'_\nu\left(z\right) = Y_{\nu+1}\left(z\right)-Y_{\nu-1}\left(z\right)$ (cf. \cite[10.6.E1]{NIST}). Employing the relations \eqref{eq102} and \eqref{eq688}, and matching our notation with that of \eqref{eq108}, we find
\begin{multline*}
Y'_\nu  \left( z \right) = \left( {\frac{2}{{\pi z}}} \right)^{\frac{1}{2}} \left( \cos \omega \left( {\sum\limits_{n = 0}^{N - 1} {\left( { - 1} \right)^n \frac{{b_{2n} \left( \nu  \right)}}{{z^{2n} }}}  + R_{2N}^{(J')} \left( {z,\nu } \right)} \right) \right. \\ \left.-\sin \omega \left( {\sum\limits_{m = 0}^{M - 1} {\left( { - 1} \right)^m \frac{{b_{2m + 1} \left( \nu  \right)}}{{z^{2m + 1} }}}  - R_{2M + 1}^{(J')} \left( {z,\nu } \right)} \right) \right).
\end{multline*}

The modified Bessel function $I_\nu \left( z \right)$ can be treated in the following manner. This function is related to the modified Bessel function $K_\nu  \left( z \right)$ through the connection formula
\begin{equation}\label{eq094}
I_\nu  \left( z \right) =  \mp \frac{i}{\pi }K_\nu  \left( {ze^{ \mp \pi i} } \right) \pm \frac{i}{\pi }e^{ \pm \pi i\nu } K_\nu  \left( z \right)
\end{equation}
(see, for instance, \cite[10.34.E3]{NIST}). Let $N$ and $M$ be arbitrary non-negative integers. We express the functions $K_\nu  \left( {ze^{ \mp \pi i} } \right)$ and $K_\nu  \left( z \right)$ as
\[
K_\nu  \left( {ze^{ \mp \pi i} } \right) =  \pm i\left( {\frac{\pi }{{2z}}} \right)^{\frac{1}{2}} e^z \left( {\sum\limits_{n = 0}^{N - 1} {\left( { - 1} \right)^n \frac{{a_n \left( \nu  \right)}}{{z^n }}}  + R_N^{\left( K \right)} \left( {ze^{ \mp \pi i} ,\nu } \right)} \right)
\]
and
\[
K_\nu  \left( z \right) = \left( {\frac{\pi }{{2z}}} \right)^{\frac{1}{2}} e^{ - z} \left( {\sum\limits_{m = 0}^{M - 1} {\frac{{a_m \left( \nu  \right)}}{{z^m }}}  + R_M^{\left( K \right)} \left( {z,\nu } \right)} \right)
\]
(cf. equation \eqref{eq70}), substitute these expressions into the right-hand side of the functional relation \eqref{eq094} and match the notation with that of \eqref{eq87} in order to obtain
\begin{equation}\label{eq087}
I_\nu  \left( z \right) = \frac{{e^z }}{{\left( {2\pi z} \right)^{\frac{1}{2}} }}\left( {\sum\limits_{n = 0}^{N - 1} {\left( { - 1} \right)^n \frac{{a_n \left( \nu  \right)}}{{z^n }}}  + R_N^{\left( K \right)} \left( {ze^{ \mp \pi i} ,\nu } \right)} \right) \pm ie^{ \pm \pi i\nu } \frac{{e^{ - z} }}{{\left( {2\pi z} \right)^{\frac{1}{2}} }}\left( {\sum\limits_{m = 0}^{M - 1} {\frac{{a_m \left( \nu  \right)}}{{z^m }}}  + R_M^{\left( K \right)} \left( {z,\nu } \right)} \right).
\end{equation}

The analogous expression for the derivative can be deduced by substituting \eqref{eq087} into the right-hand side of the connection formula $2I'_\nu  \left( z \right) = I_{\nu  + 1} \left( z \right) + I_{\nu  - 1} \left( z \right)$ (see, e.g., \cite[10.29.E1]{NIST}). By making use of the relations \eqref{eq101} and \eqref{eq688} and taking into account the notation of \eqref{eq223}, we obtain
\[
I'_\nu  \left( z \right) = \frac{{e^z }}{{\left( {2\pi z} \right)^{\frac{1}{2}} }}\left( {\sum\limits_{n = 0}^{N - 1} {\left( { - 1} \right)^n \frac{{b_n \left( \nu  \right)}}{{z^n }}}  + R_N^{(K')} \left( {ze^{ \mp \pi i} ,\nu } \right)} \right) \mp ie^{ \pm \pi i\nu } \frac{{e^{ - z} }}{{\left( {2\pi z} \right)^{\frac{1}{2}} }}\left( {\sum\limits_{m = 0}^{M - 1} {\frac{{b_m \left( \nu  \right)}}{{z^m }}}  + R_M^{(K')} \left( {z,\nu } \right)} \right).
\]

\section{Bounds for the basic terminants}\label{appendixb}

In this appendix, we prove some simple estimates for the absolute value of the basic terminants $\Lambda _p \left( w \right)$ and $\Pi_p \left( w \right)$ with $p>0$. These estimates depend only on $p$ and the argument of $w$ and therefore also provide bounds for the quantities $\mathop {\sup }\nolimits_{r \geq 1} \big| {\Lambda _{p} \left( {2zr } \right)} \big|$ and $\mathop {\sup }\nolimits_{r \geq 1} \big| {\Pi _{p} \left( {2zr } \right)} \big|$ which appear in Theorems \ref{theorem3}--\ref{theorem7}.

\begin{proposition}\label{propb1} For any $p>0$, the following holds:
\begin{equation}\label{eq368}
\left| {\Lambda _p \left( w \right)} \right| \le \begin{cases} 1 & \text{ if } \; \left|\arg w\right| \leq \frac{\pi}{2}, \\ \min \left( {\left|\csc \left( {\arg w} \right)\right|,\sqrt {e\left( {p + \frac{1}{2}} \right)} } \right) & \text{ if } \; \frac{\pi}{2} < \left|\arg w\right| \leq \pi,\end{cases}
\end{equation}
and
\begin{equation}\label{eq369}
\left| {\Pi _p \left( w \right)} \right| \le \begin{cases} 1 & \text{ if } \; \left|\arg w\right| \leq \frac{\pi}{4}, \\ \min \left( {\left|\csc \left( {2\arg w} \right)\right|,\sqrt {\frac{e}{4}\left( {p + \frac{3}{2}} \right)} } \right) & \text{ if } \; \frac{\pi}{4} < \left|\arg w\right| \leq \frac{\pi}{2}. \end{cases}
\end{equation}
Moreover, when $w$ is positive, we have $0<\Lambda _p \left( w \right)<1$ and $0<\Pi_p \left( w \right)<1$.
\end{proposition}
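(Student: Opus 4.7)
My plan is to start from the integral representation
\[
\Lambda_p(w) = \int_0^\infty \bigg(1 + \frac{s}{w}\bigg)^{-p} e^{-s}\, ds,
\]
valid for $|\arg w| < \pi$ and $p>0$, obtained by substituting $t = w+s$ in $\Gamma(1-p,w) = \int_w^\infty t^{-p}e^{-t}\,dt$ (with the horizontal ray from $w$ to $+\infty$ as integration contour); convergence throughout the stated sector follows from the exponential decay of $e^{-w-s}$ in $s>0$ and the polynomial growth of $(w+s)^{-p}$. For $|\arg w|\le \pi/2$, the elementary identity $|1+s/w|^2 = 1 + 2(s/|w|)\cos(\arg w) + (s/|w|)^2 \ge 1$ (using $\cos(\arg w)\ge 0$) yields $|(1+s/w)^{-p}|\le 1$ and hence $|\Lambda_p(w)|\le 1$. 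When $w>0$ the integrand is real and strictly between $0$ and $e^{-s}$ on $(0,\infty)$, which proves $0<\Lambda_p(w)<1$.

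For $\pi/2<\arg w\le \pi$, I would apply Cauchy's theorem to rotate the $s$-contour from $[0,\infty)$ to the ray $s=re^{i(\arg w-\pi/2)}$, $r\ge 0$. The rotation is justified because the only singularity of the integrand (at $s=-w$, of argument $\arg w-\pi$) lies outside the swept sector $\{0\le \arg s\le \arg w-\pi/2\}$, and $e^{-s}$ suppresses the arc at infinity. On the new contour, $s/w=-ir/|w|$ gives $|1+s/w|=\sqrt{1+r^2/|w|^2}\ge 1$ and $|e^{-s}|=e^{-r\sin(\arg w)}$, so
\[
|\Lambda_p(w)| \le \int_0^\infty (1+r^2/|w|^2)^{-p/2}\,e^{-r\sin(\arg w)}\, dr \le \int_0^\infty e^{-r\sin(\arg w)}\, dr = |\csc(\arg w)|.
\]
The case $-\pi\le \arg w<-\pi/2$ is handled by the conjugate rotation $s=re^{i(\arg w+\pi/2)}$.

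The second bound, $\sqrt{e(p+1/2)}$, is the main technical obstacle, since it must be independent of $|w|$ while the rotated-contour integrand carries a natural $|w|$-scale (and in fact blows up with $|w|$ as $\arg w \to \pi$). I expect to extract this bound by splitting the $r$-integral at an optimal point $r_*$ depending on $p$ and $|w|\sin(\arg w)$: using the trivial majorant $(1+r^2/|w|^2)^{-p/2}\le 1$ on $[0,r_*]$ together with the polynomial bound $(|w|/r)^{p}$ on $[r_*,\infty)$, and then optimizing $r_*$. The resulting balance $p\log r\sim r\sin(\arg w)$ reproduces $\Gamma(p+1/2)$ via Stirling's formula, with the extra factor $\sqrt{e}$ arising from the usual $(p/e)^p$ saddle-point constant.

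For $\Pi_p$ I proceed analogously from $\Pi_p(w)=\tfrac12[\Lambda_p(iw)+\Lambda_p(-iw)]$. The sectoral angles halve because each summand inherits an additional $\pm\pi/2$ rotation, and the bounds follow by combining the two rotated-contour representations, exploiting the cancellation identity $(1-is/w)^{-p}+(1+is/w)^{-p}=2(1+s^2/|w|^2)^{-p/2}\cos(p\arctan(s/|w|))$, together with the angle-doubling identity $\sin(2\arg w)=2\sin(\arg w)\cos(\arg w)$, which produces $|\csc(2\arg w)|$ in place of $|\csc(\arg w)|$. Strict positivity $0<\Pi_p(w)<1$ for $w>0$ follows from the sine-integral representation
\[
\Pi_p(w)=\int_0^\infty (1+u/w)^{-p}\sin u\,du \qquad (w>0),
\]
obtained by combining $\Lambda_p(w)=w\int_0^\infty (1+t)^{-p}e^{-wt}\,dt$ applied at $\pm iw$, together with the classical fact that the sine Fourier integral of a strictly decreasing positive function on $[0,\infty)$ lies in $(0,1)$, proved by pairing consecutive half-period intervals $[2k\pi,(2k+1)\pi]$ and $[(2k+1)\pi,(2k+2)\pi]$.
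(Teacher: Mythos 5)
Your treatment of the easy cases is fine: the representation $\Lambda_p(w)=\int_0^{+\infty}(1+s/w)^{-p}e^{-s}\,ds$ does give $|\Lambda_p(w)|\le 1$ and $0<\Lambda_p(w)<1$ for $|\arg w|\le\frac{\pi}{2}$, and the full rotation $s=re^{i(\arg w-\pi/2)}$ does yield $|\csc(\arg w)|$. The fatal problem is the second bound $\sqrt{e(p+\frac12)}$. After your full rotation the proposed majorant is $\int_0^{+\infty}(1+r^2/|w|^2)^{-p/2}e^{-r\sin(\arg w)}\,dr$, and at $\arg w=\pi$ this equals $|w|\int_0^{+\infty}(1+u^2)^{-p/2}\,du$, which grows linearly in $|w|$; no splitting at an optimal $r_*$ with the majorants $1$ and $(|w|/r)^p$ can repair this (the optimization gives $r_*=|w|$ and a bound $\asymp|w|$). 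Once you have taken absolute values inside the fully rotated integral, the $|w|$-uniform bound is simply false, because the boundedness of $\Lambda_p$ near $\arg w=\pi$ relies on the oscillation of $(1-ir/|w|)^{-p}$ that you have discarded. The paper avoids this by working with the \emph{first-power} denominator representation $\Lambda_p(w)=\frac{1}{\Gamma(p)}\int_0^{+\infty}\frac{t^{p-1}e^{-t}}{1+t/w}\,dt$ and rotating only by the partial, $p$-dependent angle $\varphi=\arctan(p^{-1/2})$: the rotation costs the factor $\cos^{-p}\varphi=(1+1/p)^{p/2}\le\sqrt e$ (this is why $\varphi$ must shrink like $p^{-1/2}$), while the denominator satisfies $|1+t/w|\ge\sin(\arg w-\varphi)\ge\sin\varphi=(1+p)^{-1/2}$ \emph{once}, not to the $p$-th power. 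That interplay is the key idea your proposal is missing.

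Two further gaps. First, for $\Pi_p$ in $\frac{\pi}{4}<|\arg w|\le\frac{\pi}{2}$ your plan rests on the identity $(1-is/w)^{-p}+(1+is/w)^{-p}=2(1+s^2/|w|^2)^{-p/2}\cos(p\arctan(s/|w|))$, which holds only for real $w$ (for complex $w$ the two terms are not conjugates), and a plain triangle inequality on $\frac12(|\Lambda_p(iw)|+|\Lambda_p(-iw)|)$ does not produce either $|\csc(2\arg w)|$ or $\sqrt{\frac e4(p+\frac32)}$; the paper instead works directly with $\Pi_p(w)=\frac{1}{\Gamma(p)}\int_0^{+\infty}\frac{t^{p-1}e^{-t}}{1+(t/w)^2}\,dt$, where $|1+(t/w)^2|\ge|\sin(2\arg w)|$ comes out in one step. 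Second, your sine-integral representation $\Pi_p(w)=\int_0^{+\infty}(1+u/w)^{-p}\sin u\,du$ for $w>0$ is correct and gives positivity by the alternating-series argument, but the ``classical fact'' you invoke for the upper bound is false for merely decreasing positive functions (the indicator of $[0,\pi]$ has sine integral $2$); the pairing argument only yields the bound $\int_0^{\pi}f(u)\sin u\,du\le 2f(0)=2$. One can rescue it using convexity of $(1+u/w)^{-p}$, but the paper's route --- $0<1/(1+(t/w)^2)<1$ under the probability weight $t^{p-1}e^{-t}/\Gamma(p)$ --- is immediate.
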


\begin{proof} To prove the estimate \eqref{eq368}, it suffices to consider the range $0 \le \arg w \le \pi$, because $\Lambda _p \left( {\bar w} \right) = \overline {\Lambda _p \left( w \right)}$. Our starting point is the integral representation
\begin{equation}\label{eq355}
\Lambda _p \left( w \right) = \frac{1}{{\Gamma \left( {p } \right)}}\int_0^{ + \infty } {\frac{{t^{p-1} e^{ - t} }}{{1 + t/w}}dt} ,
\end{equation}
which is valid when $\left|\arg w\right|<\pi$ and $p>0$ (cf. \cite[8.6.E4]{NIST}). For $t \geq 0$, we have
\begin{equation}\label{eq366}
\left| {1 + \frac{t}{w}} \right| \ge \begin{cases} 1 & \text{ if } \; 0 \leq \arg w \leq \frac{\pi}{2}, \\ \sin \left(\arg w\right) & \text{ if } \; \frac{\pi}{2} < \arg w < \pi,\end{cases}
\end{equation}
and therefore
\[
\left| {\Lambda _p \left( w \right)} \right| \le \begin{cases} 1 & \text{ if } \; 0 \leq \arg w \leq \frac{\pi}{2}, \\ \csc \left( {\arg w} \right) & \text{ if } \; \frac{\pi}{2} < \arg w < \pi.\end{cases}
\]
To complete the proof, one has to show that $\left| {\Lambda _p \left( w \right)} \right| \le \sqrt {e\left( {p + \frac{1}{2}} \right)}$ when $\frac{\pi }{2} < \arg w \le \pi$. For this purpose, we deform the contour of integration in \eqref{eq355} by rotating it through an acute angle $\varphi$. Thus, by appealing to Cauchy's theorem and analytic continuation, we have, for arbitrary $0<\varphi<\frac{\pi}{2}$, that
\begin{equation}\label{eq919}
\Lambda _p \left( w \right) = \frac{1}{{\Gamma \left( {p } \right)}}\int_0^{ + \infty e^{i\varphi } } {\frac{{t^{p-1} e^{ - t} }}{{1 + t/w}}dt} 
\end{equation}
when $\frac{\pi }{2} < \arg w \le \pi$. Employing the inequality \eqref{eq366}, we find that
\[
\left| {\Lambda _p \left( w \right)} \right| \le \frac{1}{{\cos ^p \varphi }} \times \begin{cases} 1 & \text{ if } \; \frac{\pi }{2} \le \arg w \le \frac{\pi }{2} + \varphi, \\ \csc \left(\arg w -\varphi \right) & \text{ if } \; \frac{\pi }{2} + \varphi  < \arg w \le \pi.\end{cases}
\]
We now choose the value of $\varphi$ which minimizes the right-hand side of this inequality when $\arg w =\pi$, namely $\varphi  = \arctan (p^{ - \frac{1}{2}} )$. We may then claim that
\[
\left| {\Lambda _p \left( w \right)} \right| \le \frac{1}{{\cos ^p (\arctan (p^{ - \frac{1}{2}} ))}} = \left( {1 + \frac{1}{{p }}} \right)^{\frac{{p }}{2}}  < \sqrt {e\left( {p + \frac{1}{2}} \right)} ,
\]
when $\frac{\pi }{2} \le \arg w \le \frac{\pi }{2} + \arctan (p^{ - \frac{1}{2}} )$, where the last inequality can be obtained by means of elementary analysis. In the remaining case $\frac{\pi }{2} + \arctan (p^{ - \frac{1}{2}} ) < \arg w \le \pi$, we have
\[
\left| {\Lambda _p \left( w \right)} \right| \le \frac{{\csc ( \arg w - \arctan (p^{ - \frac{1}{2}} ))}}{{\cos ^p (\arctan (p^{ - \frac{1}{2}} ))}} \le \frac{{\csc ( \pi  - \arctan (p^{ - \frac{1}{2}} ))}}{{\cos ^p (\arctan (p^{ - \frac{1}{2}} ))}} = \left( {1 + \frac{1}{{p }}} \right)^{\frac{p+1}{2}} p^{\frac{1}{2}}  < \sqrt {e\left( {p + \frac{1}{2}} \right)} .
\]
For the last step, note that the quantity $\left( {1 + \frac{1}{{p}}} \right)^{\frac{p+1}{2}} \sqrt {\frac{{p }}{{p + a}}}$, as a function of $p>0$, increases monotonically if and only if $a\geq \frac{1}{2}$, in which case it has limit $\sqrt{e}$.

The estimate \eqref{eq369} can be proved in a similar way, starting from the representation
\begin{equation}\label{eq356}
\Pi _p \left( w \right) = \frac{1}{{\Gamma \left( {p } \right)}}\int_0^{ + \infty } {\frac{{t^{p-1} e^{ - t} }}{{1 + \left( {t/w} \right)^2 }}dt} ,
\end{equation}
which is valid when $\left|\arg w\right|<\frac{\pi}{2}$ and $p>0$. This representation can be obtained from \eqref{eq355} and the definition of the basic terminant $\Pi _p \left( w \right)$.

Finally, in the case of a positive $w$, notice that $0 < 1/\left(1 + t/w\right) < 1$ and $0 < 1/(1 + \left( {t/w} \right)^2)  < 1$ for any $t>0$. Therefore, the integral representations \eqref{eq355} and \eqref{eq356} combined with the mean value theorem of integration imply that $0<\Lambda _p \left( w \right)<1$ and $0<\Pi_p \left( w \right)<1$.
\end{proof}

\begin{proposition}\label{propb2} For any $p>0$ and $w$ with $\frac{\pi}{2} < \left| {\arg w} \right| < \frac{3\pi}{2}$, we have
\begin{equation}\label{eq917}
\left| {\Lambda _p \left( w \right)} \right| \le \frac{{\left| {\csc \left( {\arg w - \varphi } \right)} \right|}}{{\cos ^p \varphi }},
\end{equation}
where $\varphi$ is the unique solution of the implicit equation
\[
\left( {p + 1} \right)\cos \left( {\arg w -2\varphi} \right) = \left( {p - 1} \right)\cos\left(\arg w\right)
\]
that satisfies $0 < \varphi  <  - \frac{\pi }{2} + \arg w$ if $\frac{\pi }{2} < \arg w < \pi$, $ - \pi  + \arg w  < \varphi < \frac{\pi }{2}$ if $\pi  \le \arg w  < \frac{3\pi}{2}$, $0 < \varphi < \frac{\pi }{2} + \arg w$ if $- \pi  < \arg w <  -\frac{\pi}{2}$ and $ - \frac{\pi}{2} < \varphi < \pi  + \arg w$ if $- \frac{{3\pi }}{2} < \arg w  \le  - \pi $.

Similarly, for any $p>0$ and $w$ with $\frac{\pi}{4} < \left| {\arg w} \right| < \pi$, we have
\begin{equation}\label{eq918}
\left| {\Pi _p \left( w \right)} \right| \le \frac{{\left| {\csc \left( {2\left( {\arg w - \varphi' } \right)} \right)} \right|}}{{\cos ^p \varphi' }},
\end{equation}
where $\varphi'$ is the unique solution of the implicit equation
\[
\left( {p + 2} \right)\cos \left( {2\arg w -3\varphi'} \right) = \left( {p - 2} \right)\cos\left(2\arg w-\varphi'\right)
\]
that satisfies $0 < \varphi'  <  - \frac{\pi}{4} + \arg w$ if $\frac{\pi}{4} < \arg w  < \frac{\pi}{2}$, $ - \frac{\pi}{2}  + \arg w  < \varphi'  < -\frac{\pi}{4}+\arg w$ if $\frac{\pi}{2}  \le \arg w  < \frac{3\pi}{4}$, $ - \frac{\pi}{2}  + \arg w  < \varphi'  < \frac{\pi }{2}$ if $\frac{3\pi}{4}  \le \arg w  < \pi$, $\frac{\pi}{4} + \arg w < \varphi'  <  0$ if $-\frac{\pi }{2} < \arg w  < -\frac{\pi}{4}$, $\frac{\pi}{4}  + \arg w  < \varphi'  < \frac{\pi}{2}+\arg w$ if $-\frac{3\pi}{4}  < \arg w  \le -\frac{\pi}{2}$ and $ - \frac{\pi}{2}  < \varphi'  < \frac{\pi }{2}+ \arg w$ if $-\pi < \arg w \le -\frac{3\pi}{4}$.
\end{proposition}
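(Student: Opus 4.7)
The plan is to refine the contour-rotation argument used in the proof of Proposition \ref{propb1}, keeping the rotation angle $\varphi$ (respectively $\varphi'$) as a free parameter and optimising it. By the symmetries $\Lambda_p(\bar w) = \overline{\Lambda_p(w)}$ and $\Pi_p(\bar w) = \overline{\Pi_p(w)}$, it is enough to treat $\arg w > 0$; the ranges with $\arg w < 0$ then follow by complex conjugation.

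For the bound \eqref{eq917}, start from the rotated-contour representation
\[
\Lambda_p(w) = \frac{1}{\Gamma(p)} \int_0^{+\infty e^{i\varphi}} \frac{t^{p-1} e^{-t}}{1+t/w}\, dt,
\]
which, by Cauchy's theorem combined with analytic continuation in $w$, is valid provided $|\varphi|<\pi/2$ (for convergence at infinity) and provided the deformation from the positive real axis to the ray of angle $\varphi$ does not sweep across the pole at $t=-w$. On the ray $t=se^{i\varphi}$ one has $|e^{-t}|=e^{-s\cos\varphi}$, and the elementary distance inequality
\[
\bigl|1+\tfrac{s}{|w|}\,e^{i(\varphi-\arg w)}\bigr| \ge |\sin(\arg w-\varphi)|
\]
holds whenever $|\arg w-\varphi|>\pi/2$. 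Combining these and evaluating the resulting gamma integral produces
\[
|\Lambda_p(w)| \le \frac{|\csc(\arg w-\varphi)|}{\cos^p\varphi}
\]
for every admissible $\varphi$. Differentiating the right-hand side in $\varphi$ and rewriting $\cos(\arg w-\varphi)\cos\varphi$ and $\sin(\arg w-\varphi)\sin\varphi$ via the product-to-sum identities reduces the critical-point condition exactly to $(p+1)\cos(\arg w-2\varphi)=(p-1)\cos(\arg w)$.

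The bound \eqref{eq918} is derived along identical lines, starting from
\[
\Pi_p(w) = \frac{1}{\Gamma(p)}\int_0^{+\infty e^{i\varphi'}}\frac{t^{p-1}e^{-t}}{1+(t/w)^2}\,dt,
\]
whose integrand has poles at $t=\pm iw$, which in turn restricts the admissible range of $\varphi'$ according to the quadrant in which $\pm iw$ lie. The analogous inequality
\[
\bigl|1+\tfrac{s^2}{|w|^2}e^{2i(\varphi'-\arg w)}\bigr| \ge |\sin(2(\arg w-\varphi'))|,
\]
valid for $|\arg w-\varphi'|>\pi/4$, together with the same gamma-integral evaluation, yields $|\Pi_p(w)| \le |\csc(2(\arg w-\varphi'))|/\cos^p\varphi'$. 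Optimising via the product-to-sum expansions of $\cos(2\arg w-2\varphi')\cos\varphi'$ and $\sin(2\arg w-2\varphi')\sin\varphi'$ leads directly to $(p+2)\cos(2\arg w-3\varphi')=(p-2)\cos(2\arg w-\varphi')$.

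The main obstacle is the case analysis required to justify, in each of the sub-intervals of $\arg w$ listed in the statement, (i) the range of $\varphi$ or $\varphi'$ compatible with pole-avoidance during the contour deformation, integrability at infinity, and applicability of the above sine-distance bounds, and (ii) the uniqueness of the critical point inside that range. Uniqueness will follow from a monotonicity check: as $\varphi$ traverses its admissible interval, $\arg w-2\varphi$ remains inside $(0,\pi)$, so $\cos(\arg w-2\varphi)$ is strictly monotone and sweeps through every value strictly between $\cos(\arg w)$ and $-\cos(\arg w)$, while $\tfrac{p-1}{p+1}\cos(\arg w)$ lies strictly in this interval since $|\tfrac{p-1}{p+1}|<1$ for $p>0$. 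An analogous monotonicity argument for $\cos(2\arg w-3\varphi')$ versus $\tfrac{p-2}{p+2}\cos(2\arg w-\varphi')$ handles $\Pi_p$.
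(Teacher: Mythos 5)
Your proposal is correct and follows the same route as the paper for the substantive estimate: reduce to $\arg w>0$ by conjugation, rotate the contour in the integral representation of $\Lambda_p$ (respectively $\Pi_p$) through an angle $\varphi$, apply the distance inequality $\left|1+re^{i\alpha}\right|\ge\left|\sin\alpha\right|$ on the rotated ray together with $\left|e^{-t}\right|=e^{-s\cos\varphi}$, and evaluate the gamma integral to obtain $\left|\Lambda_p(w)\right|\le\left|\csc(\arg w-\varphi)\right|/\cos^p\varphi$ for all admissible $\varphi$. The only place you diverge is the optimisation step: the paper simply invokes a lemma of Meijer asserting that the minimisation over $\varphi$ has a unique solution with the stated properties, whereas you derive the critical-point condition explicitly. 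Your computation is right: differentiating $-\ln\sin(\arg w-\varphi)-p\ln\cos\varphi$ and applying the product-to-sum identities does give $(p+1)\cos(\arg w-2\varphi)=(p-1)\cos(\arg w)$, and the analogous calculation yields the stated equation for $\varphi'$. Your uniqueness argument for $\Lambda_p$ (monotone sweep of $\cos(\arg w-2\varphi)$ through $(\cos(\arg w),-\cos(\arg w))$ against the constant $\frac{p-1}{p+1}\cos(\arg w)$) is sound, but note that it does not transfer verbatim to $\Pi_p$, where both sides of the implicit equation depend on $\varphi'$; the cleanest fix, covering both cases at once, is to observe that the logarithm of the bound is strictly convex in the rotation angle (its second derivative is $\sin^{-2}(\arg w-\varphi)+p\cos^{-2}\varphi>0$, and similarly with $4\sin^{-2}(2(\arg w-\varphi'))$ for $\Pi_p$), which gives at most one critical point and identifies it as the minimum. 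With that adjustment your self-contained derivation replaces the appeal to Meijer's lemma.
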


We remark that the values of $\varphi$ and $\varphi'$ in this proposition are chosen so as to minimize the right-hand sides of the inequalities \eqref{eq917} and \eqref{eq918}, respectively.

\begin{proof} It is enough to prove \eqref{eq917} when $\frac{\pi}{2}<\arg w< \frac{3\pi}{2}$, because this inequality for the case $-\frac{3\pi}{2}<\arg w< -\frac{\pi}{2}$ can be derived by taking complex conjugates. Note that the representation \eqref{eq919} is actually valid in the wider range $\frac{\pi }{2} < \arg w < \pi  + \varphi$. Thus, we can infer that
\[
\left| {\Lambda _p \left( w \right)} \right| \le \frac{{\csc \left( {\arg w - \varphi } \right)}}{{\cos^p \varphi }},
\]
provided $\frac{\pi }{2}+\varphi < \arg w < \pi  + \varphi$. We would like to choose $\varphi$ in a way that the right-hand side of this inequality is minimized. A lemma of Meijer \cite[pp. 953--954]{Meijer} shows that this minimization problem has a unique solution with the properties given in the statement of Proposition \ref{propb2}.

The bound for $\Pi _p \left( w \right)$ can be proved similarly: we deform the contour of integration in \eqref{eq356} by rotating it through an arbitrary acute angle $\varphi'$, and then we employ the inequality \eqref{eq366} and use the corresponding lemma of Meijer \cite[p. 956]{Meijer} to minimize the resulting estimate.
\end{proof}

\begin{proposition}\label{propb3} For any $p>0$, we have
\begin{equation}\label{eq811}
\left| {\Lambda _p \left( w \right)} \right| \le 1 + \Gamma \left( {\frac{{p }}{2} + 1} \right){\bf F}\left( {\frac{1}{2},\frac{p}{2};\frac{{p}}{2} + 1;\cos ^2 \left( {\arg w} \right)} \right) \le 1 + \chi \left( {p } \right),
\end{equation}
for $\frac{\pi }{2} < \left| {\arg w} \right| \le \pi$, and
\begin{equation}\label{eq812}
\left| {\Pi _p \left( w \right)} \right| \le 1 + \frac{1}{2}\Gamma \left( {\frac{{p }}{2} + 1} \right){\bf F}\left( {\frac{1}{2},\frac{p}{2};\frac{{p }}{2} + 1;\sin ^2 \left( {\arg w} \right)} \right) \le 1 + \frac{{\chi \left( {p } \right)}}{2},
\end{equation}
for $\frac{\pi }{4} < \left| {\arg w} \right| \le \frac{\pi }{2}$. Here $\chi \left( {p } \right)$ is defined by \eqref{eq611}.
\end{proposition}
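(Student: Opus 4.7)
I focus on the bound for $\Lambda_p$; the bound for $\Pi_p$ will follow by the analogous argument, exploiting the defining identity $\Pi_p(w)=\tfrac12[\Lambda_p(we^{i\pi/2})+\Lambda_p(we^{-i\pi/2})]$, which sends the sector $\pi/4<|\arg w|\le\pi/2$ into the range $3\pi/4<|\arg(\cdot)|\le\pi$ treated by the $\Lambda_p$-case. By the conjugation symmetry $\Lambda_p(\bar w)=\overline{\Lambda_p(w)}$ it suffices to consider $\theta:=\arg w\in(\pi/2,\pi]$, and by continuity from within the cut plane I may restrict to $\theta<\pi$, the equality case $\theta=\pi$ following in the limit.

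The core of the argument is the decomposition
\[
\Lambda_p(w)=\Lambda_p(|w|e^{i\pi/2})+\bigl[\Lambda_p(w)-\Lambda_p(|w|e^{i\pi/2})\bigr].
\]
The first term has absolute value at most $1$ by Proposition \ref{propb1} (since $\arg(|w|e^{i\pi/2})=\pi/2$), which accounts for the ``$1$'' in the target bound. The second term is the variation of $\Lambda_p$ along the circular arc $\{|w|e^{i\psi}:\pi/2\le\psi\le\theta\}$, which I would express as a contour integral---either directly as an arc integral in the $w$-plane, using the derivative formula $\frac{d}{dw}\Lambda_p(w)=(1+p/w)\Lambda_p(w)-1$ obtained from $\Lambda_p(w)=w^pe^w\Gamma(1-p,w)$, or equivalently by deforming the $t$-contour in \eqref{eq355} through the relevant range. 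After an absolute-value estimate and a trigonometric change of variables, this piece is then matched against Euler's integral representation
\[
\Gamma\!\left(\tfrac{p}{2}+1\right){\bf F}\!\left(\tfrac12,\tfrac{p}{2};\tfrac{p}{2}+1;z\right)=\tfrac{p}{2}\int_0^1 u^{p/2-1}(1-zu)^{-1/2}\,du,
\]
evaluated at $z=\cos^2\theta$, to yield the first inequality in \eqref{eq811}. The corresponding argument for $\Pi_p$ uses \eqref{eq356} in place of \eqref{eq355} and produces the extra factor of $\tfrac12$ in \eqref{eq812} from the symmetrization built into the definition of $\Pi_p$.

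The second, uniform inequality $\le 1+\chi(p)$ (respectively $\le 1+\chi(p)/2$) is immediate from two facts: the power series for ${\bf F}(1/2,p/2;p/2+1;z)$ has non-negative coefficients, so the function is monotonically increasing in $z\in[0,1]$; and Gauss's summation formula at $z=1$ gives
\[
\Gamma\!\left(\tfrac{p}{2}+1\right){\bf F}\!\left(\tfrac12,\tfrac{p}{2};\tfrac{p}{2}+1;1\right)=\frac{\sqrt{\pi}\,\Gamma(p/2+1)}{\Gamma(p/2+1/2)}=\chi(p),
\]
by \eqref{eq611}, since $\cos^2\theta\le 1$ (respectively $\sin^2\theta\le 1$).

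The main obstacle I expect is the matching step: producing an upper bound for $|\Lambda_p(w)-\Lambda_p(|w|e^{i\pi/2})|$ whose form agrees exactly with the Euler integral at $z=\cos^2\theta$. A straightforward absolute-value estimate along the arc via the derivative formula produces bounds of order $|w|+p$ that do not exhibit the required $\cos^2\theta$-dependence; extracting the correct beta kernel $(1-zu)^{-1/2}$ will likely require a more careful contour manipulation inside \eqref{eq355} itself, combined with a substitution like $u=(\sin^2\psi)/(\sin^2\theta)$ or an analogous trigonometric change that maps the angular variable onto $[0,1]$ while producing the desired $(1-zu)^{-1/2}$ factor.
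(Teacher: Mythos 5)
Your handling of the second inequality in \eqref{eq811} (non-negativity of the Maclaurin coefficients of ${\bf F}\left( \frac{1}{2},\frac{p}{2};\frac{p}{2}+1;z \right)$, hence monotonicity in $z$, plus Gauss's summation at $z=1$ giving $\chi(p)$ as in \eqref{eq611}) is correct and is exactly the paper's argument. Your reduction of \eqref{eq812} to \eqref{eq811} via $\Pi_p(w)=\frac{1}{2}\big(\Lambda_p(we^{\frac{\pi}{2}i})+\Lambda_p(we^{-\frac{\pi}{2}i})\big)$ is also the paper's route, though note that only one of the two rotated arguments lands in $(\frac{3\pi}{4},\pi]$, where \eqref{eq811} applies with $\cos^2(\arg w\pm\frac{\pi}{2})=\sin^2(\arg w)$; the other lands in $[0,\frac{\pi}{4})$ where $|\Lambda_p|\le 1$ by \eqref{eq368}, and averaging the two bounds is what produces the factor $\frac{1}{2}$. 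The genuine gap is the first inequality in \eqref{eq811}, which is the substance of the proposition and which your proposal does not establish. Your decomposition $\Lambda_p(w)=\Lambda_p(|w|e^{\frac{\pi}{2}i})+\big[\Lambda_p(w)-\Lambda_p(|w|e^{\frac{\pi}{2}i})\big]$ attributes the leading ``$1$'' to the wrong source, and, as you yourself concede, estimating the difference along the circular arc via $\Lambda_p'(w)=(1+p/w)\Lambda_p(w)-1$ yields a bound of order $|w|+p$ that cannot be converted into the required $|w|$-independent hypergeometric expression. The ``more careful contour manipulation'' you defer to is precisely the missing proof.

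For the record, the chain that works is the following. Start not from \eqref{eq355} but from the companion representation $\Lambda_p(w)=\int_0^{+\infty}e^{-t}(1+t/w)^{-p}\,dt$ and integrate by parts once: the boundary term is the ``$1$'', and the remainder is $-\frac{p}{w}\int_0^{+\infty}e^{-t}(1+t/w)^{-p-1}\,dt$. Rotating this $t$-contour onto the positive imaginary axis (legitimate for $\frac{\pi}{2}<\arg w\le\pi$ by Cauchy's theorem and analytic continuation) and substituting $t=|w|u$ gives the absolute-value bound $p\int_0^{+\infty}\big(1+2u\sin(\arg w)+u^2\big)^{-\frac{p+1}{2}}\,du$, which is independent of $|w|$. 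The further substitution $t=1-\sin^2(\arg w)/(u+\sin(\arg w))^2$ (or $t=u^2$ in the limiting case $\arg w=\pi$) identifies this with the Euler integral for $\sin(\arg w)\,\Gamma\!\left(\frac{p}{2}+1\right){\bf F}\!\left(\frac{p}{2}+\frac{1}{2},1;\frac{p}{2}+1;\cos^2(\arg w)\right)$, and a linear transformation converts that into the form appearing in \eqref{eq811}. Without this (or an equivalent) argument, the central estimate remains unproved.
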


\begin{proof} It is sufficient to prove \eqref{eq811} for $\frac{\pi}{2}<\arg w\leq \pi$, as the estimates for $-\pi\leq\arg w<-\frac{\pi}{2}$ can be derived by taking complex conjugates. To do so, we consider the integral representation
\[
\Lambda _p \left( w \right) = \int_0^{ + \infty } {\frac{{e^{ - t} }}{{\left( {1 + t/w} \right)^p }}dt} ,
\]
which is valid when $\left|\arg w\right|<\pi$ and $p>0$ (cf. \cite[8.6.E5]{NIST}). Integrating once by parts, we obtain
\begin{equation}\label{eq433}
\Lambda _p \left( w \right) = 1 - \frac{p}{w}\int_0^{ + \infty } {\frac{{e^{ - t} }}{{\left( {1 + t/w} \right)^{p + 1} }}dt} .
\end{equation}
Assuming that $\frac{\pi}{2}<\arg w<\pi$, we can deform the contour of integration in \eqref{eq433} by rotating it through a right angle. And therefore, by Cauchy's theorem and analytic continuation, we have
\begin{equation}\label{372}
\Lambda _p \left( w \right) = 1 - \frac{{p }}{w}\int_0^{ + \infty e^{\frac{\pi }{2}i} } {\frac{{e^{ - t} }}{{\left( {1 + t/w} \right)^{p + 1} }}dt}  = 1 - i\frac{p}{w}\int_0^{ + \infty } {\frac{{e^{ - it} }}{{\left( {1 + it/w} \right)^{p + 1} }}dt}
\end{equation}
when $\frac{\pi}{2}<\arg w\leq \pi$. Hence, we may assert that
\begin{align*}
\left| {\Lambda _p \left( w \right)} \right| & \le 1 + \frac{{p }}{{\left| w \right|}}\int_0^{ + \infty } {\frac{{dt}}{{\left| {1 + it/w} \right|^{p + 1} }}}  = 1 + \frac{{p }}{{\left| w \right|}}\int_0^{ + \infty } {\frac{{dt}}{{(1 + 2t/\left| w \right|\sin \left( {\arg w} \right) + t^2 /\left| w \right|^2 )^{\frac{p+1}{2}} }}} 
\\ & = 1 + p \int_0^{ + \infty } {\frac{{du}}{{(1 + 2u\sin \left( {\arg w} \right) + u^2 )^{\frac{p+1}{2}} }}} .
\end{align*}
To simplify this result, we can proceed as follows. When $\arg w=\pi$, we perform a change of integration variable from $u$ to $t$ by $t=u^2$, and, using the beta integral together with the known evaluation of the regularized hypergeometric function with argument $1$ (see, e.g., \cite[5.12.E3 and 15.4.E20]{NIST}), we obtain
\[
\left| {\Lambda _p \left( w \right)} \right|  \le 1 + \frac{p}{2}\int_0^{ + \infty } {\frac{{t^{ - \frac{1}{2}} }}{{\left( {1 + t} \right)^{\frac{{p + 1}}{2}} }}dt}  = 1 + \frac{{\Gamma \left( {\frac{1}{2}} \right)\Gamma \left( {\frac{p}{2} + 1} \right)}}{{\Gamma \left( 1 \right)\Gamma \left( {\frac{p}{2} + \frac{1}{2}} \right)}} = 1 + \Gamma \left( {\frac{p}{2} + 1} \right){\bf F}\left( {\frac{1}{2},\frac{p}{2};\frac{p}{2} + 1;1} \right).
\]
Since $\cos^2\left(\pi\right)=1$, this result is in agreement with \eqref{eq811}. In the case that $\frac{\pi}{2}<\arg w < \pi$, a change of variable from $u$ to $t$ via $t = 1 - \sin ^2 (\arg w)/(u + \sin (\arg w))^2$ and the well-known integral representation of the regularized hypergeometric function (see \cite[15.6.E1]{NIST}) yield
\begin{multline*}
\left| {\Lambda _p \left( w \right)} \right|  \le 1 + \frac{p}{2}\sin \left( {\arg w} \right)\int_0^1 {\frac{{\left( {1 - t} \right)^{\frac{p}{2} - 1} }}{{(1 - t\cos ^2 \left( {\arg w} \right))^{\frac{{p + 1}}{2}} }}dt} \\ = 1 + \sin \left( {\arg w} \right)\Gamma \left( {\frac{p}{2} + 1} \right){\bf F}\left( {\frac{p}{2}+\frac{1}{2},1;\frac{p}{2} + 1;\cos ^2 \left( {\arg w} \right)} \right).
\end{multline*}
The linear transformation $\sin (\arg w){\bf F}\left( {\frac{p}{2}+\frac{1}{2},1;\frac{p}{2} + 1;\cos ^2 (\arg w)} \right) = {\bf F}\left( {\frac{1}{2},\frac{p}{2};\frac{p}{2} + 1;\cos ^2 (\arg w)} \right)$ (cf. \cite[15.8.E1]{NIST}) then shows that this bound is equivalent to the required one in \eqref{eq811}.

To obtain the second inequality in \eqref{eq811}, note that
\begin{align*}
\Gamma \left( {\frac{{p }}{2} + 1} \right){\bf F}\left( {\frac{{1 }}{2},\frac{p}{2};\frac{{p  }}{2}  + 1;\cos ^2 \left( {\arg w} \right)} \right) & \le \Gamma \left( {\frac{{p }}{2} + 1} \right){\bf F}\left( {\frac{{1 }}{2},\frac{p}{2};\frac{{p}}{2} + 1;1} \right)
\\ & = \frac{{\Gamma \left( {\frac{1}{2}} \right)\Gamma \left( {\frac{{p }}{2} + 1} \right)}}{{\Gamma \left( 1 \right)\Gamma \left( {\frac{p}{2} + \frac{1}{2}} \right)}} = \pi ^{\frac{1}{2}} \frac{{\Gamma \left( {\frac{{p }}{2} + 1} \right)}}{{\Gamma \left( {\frac{{p }}{2} + \frac{1}{2}} \right)}} = \chi \left( {p } \right).
\end{align*}

The bounds \eqref{eq812} for $\Pi _p \left( w \right)$ may be proved as follows. From the definition of the basic terminant $\Pi _p \left( w \right)$, we can infer that
\[
\left| {\Pi _p \left( w \right)} \right| \le \frac{1}{2}\left( {\big| {\Lambda _p (we^{\frac{\pi }{2}i} )} \big| + \big| {\Lambda _p (we^{ - \frac{\pi }{2}i} )} \big|} \right).
\]
When $\frac{\pi }{4} < \pm \arg w \le \frac{\pi }{2}$, the term $\big| {\Lambda _p (we^{\pm\frac{\pi }{2}i} )} \big|$ is bounded by \eqref{eq811}, and we can see that $\big| {\Lambda _p (we^{ \mp \frac{\pi }{2}i} )} \big| \leq 1$ from \eqref{eq368}.
\end{proof}

Before we proceed to our last set of bounds, we would like to compare the estimates given in Propositions \ref{propb1} and \ref{propb3}. For the purpose of brevity, we consider only the bounds for $\Lambda _p \left( w \right)$; the other basic terminant $\Pi_p \left( w \right)$ can be treated in a similar way.

First, assume that $\frac{\pi}{2}<\left|\arg w\right|<\pi$, $\left|\arg w\right|$ is bounded away from $\pi$ and $p$ is large. Employing a linear transformation formula and the large-$c$ asymptotics of the regularized hypergeometric function ${\bf F}\left( {a,b;c;w} \right)$ (see, e.g., \cite[15.8.E1 and 15.12.E2]{NIST}), we find that
\begin{align*}
{\bf F}\left( {\frac{1}{2},\frac{p}{2};\frac{p}{2} + 1;\cos ^2 \left( {\arg w} \right)} \right) & = \left| {\csc \left( {\arg w} \right)} \right|{\bf F}\left( {\frac{1}{2},1;\frac{p}{2} + 1; - \cot ^2 \left( {\arg w} \right)} \right) \\ & = \frac{{\left| {\csc \left( {\arg w} \right)} \right|}}{{\Gamma \left( {\frac{p}{2} + 1} \right)}}\left( {1 + \mathcal{O}_{\arg w} \left( {\frac{1}{p}} \right)} \right).
\end{align*}
Consequently, from \eqref{eq811}, we have
\[
\left| {\Lambda _p \left( w \right)} \right| \le 1 + \left| {\csc \left( {\arg w} \right)} \right|\left( {1 + \mathcal{O}_{\arg w} \left( {\frac{1}{p}} \right)} \right).
\]
By comparing this inequality with \eqref{eq368}, one sees that, under the above circumstances, the estimate \eqref{eq368} is sharper than \eqref{eq811}.

Consider now the case when $\frac{\pi}{2}<\left|\arg w\right|<\pi$, $\left|\arg w\right|$ is close to $\pi$ and $p$ is bounded. A linear transformation formula \cite[15.8.E4]{NIST} and the definition of the regularized hypergeometric function yield
\begin{align*}
{\bf F}\left( {\frac{1}{2},\frac{p}{2};\frac{p}{2} + 1;\cos ^2 \left( {\arg w} \right)} \right)  =\; & \frac{\pi }{{\Gamma \left( {\frac{p}{2} + \frac{1}{2}} \right)}}{\bf F}\left( {\frac{1}{2},\frac{p}{2};\frac{1}{2};\sin ^2 \left( {\arg w} \right)} \right) \\ & - \pi ^{\frac{1}{2}} \frac{{\left| {\sin \left( {\arg w} \right)} \right|}}{{\Gamma \left( {\frac{p}{2}} \right)}}{\bf F}\left( {\frac{p}{2} + \frac{1}{2},1;\frac{3}{2};\sin ^2 \left( {\arg w} \right)} \right)
\\ = \; &  \frac{{\pi ^{\frac{1}{2}} }}{{\Gamma \left( {\frac{p}{2} + \frac{1}{2}} \right)}} - \frac{{2\left| {\sin \left( {\arg w} \right)} \right|}}{{\Gamma \left( {\frac{p}{2}} \right)}} + \mathcal{O}_p \left( {\sin ^2 \left( {\arg w} \right)} \right).
\end{align*}
Whence, by \eqref{eq811}, we deduce
\[
\left| {\Lambda _p \left( w \right)} \right| \le 1 + \chi \left( p \right) - \left| {\sin \left( {\arg w} \right)} \right|p + \mathcal{O}_p \left( {\sin ^2 \left( {\arg w} \right)} \right).
\]
Comparison with \eqref{eq368} then shows that, under the above circumstances, the bound \eqref{eq811} is sharper and behaves better than \eqref{eq368}, unless perhaps when $\sqrt {e\left( {p + \frac{1}{2}} \right)}  < \left| {\csc \left( {\arg w} \right)} \right|$.

Finally, we discuss the case when $\left|\arg w\right|=\pi$. It can readily be shown that
\[
\sqrt {e\left( {p + \frac{1}{2}} \right)}  < 1 + \chi \left( {p } \right) \; \text{ for } \; 0 < p < 5.9564 \ldots ,
\]
whence \eqref{eq368} is sharper than \eqref{eq811} when $\left|\arg w\right|$ is equal or close to $\pi$ and $p$ is small. On the other hand, the saddle point method applied to \eqref{372} shows that when $\left|\arg w\right| = \pi$ and $\left|p - \left| w \right|\right|$ is bounded, the asymptotics
\[
\left| {\Lambda _p \left( w \right)} \right| \sim \left( {\frac{{\pi p}}{2}} \right)^{\frac{1}{2}}
\]
holds as $p\to +\infty$. Thus, using the large-$p$ asymptotics $\chi \left( p \right) \sim \left( {\frac{{\pi p}}{2}} \right)^{\frac{1}{2}}$, we can infer that the inequality $\left| {\Lambda _p \left( w \right)} \right| \le 1 + \chi \left( p \right)$ is asymptotically sharp.

\begin{proposition}\label{propb4} For any $p>0$, we have
\begin{equation}\label{eq930}
\left| {\Lambda _p \left( w \right)} \right| \le \frac{{\sqrt {2\pi p} }}{{\left| {\cos \left( {\arg w} \right)} \right|^{p} }} + \left| {\Lambda _p (we^{ \mp 2\pi i} )} \right| \le \frac{{2\chi \left( {p  } \right)}}{{\left| {\cos \left( {\arg w} \right)} \right|^{p} }} + \left| {\Lambda _p (we^{ \mp 2\pi i} )} \right|,
\end{equation}
for $\pi < \pm \arg w < \frac{3\pi}{2}$, and
\begin{equation}\label{eq931}
\left| {\Pi _p (w)} \right| \le \frac{{\sqrt {2\pi p} }}{{2\left| {\sin \left( {\arg w} \right)} \right|^{p} }} + \left| {\Pi _p (we^{ \mp \pi i} )} \right| \le \frac{{\chi \left( {p } \right)}}{{\left| {\sin \left( {\arg w} \right)} \right|^{p } }} + \left| {\Pi _p (we^{ \mp \pi i} )} \right|,
\end{equation}
for $\frac{\pi}{2} < \pm \arg w <\pi$.
\end{proposition}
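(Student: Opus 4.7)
The plan is to derive a monodromy identity linking $\Lambda_p(w)$ on the second sheet to its value on the principal sheet, and then estimate the resulting jump. Writing $\Lambda_p(w) = w^p e^w \Gamma(1-p,w)$ and splitting $\Gamma(1-p,w) = \Gamma(1-p) - \gamma(1-p,w)$, one sees that the multivaluedness of the right-hand side enters only through the factor $w^{1-p}$ in the series for $\gamma(1-p,w)$; a continuation by $-2\pi i$ in $\arg w$ therefore multiplies $\gamma(1-p,w)$ by $e^{2\pi i(1-p)} = e^{-2\pi i p}$. After a short calculation this yields
\[
\Lambda_p(w) - \Lambda_p(we^{-2\pi i}) = (1 - e^{-2\pi i p})\,w^p e^w\,\Gamma(1-p),
\]
which I would apply in the range $\pi < \arg w < 3\pi/2$, so that $|\arg(we^{-2\pi i})| < \pi$ and $\Lambda_p(we^{-2\pi i})$ is interpreted on the principal sheet. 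The reflection formula $\Gamma(p)\Gamma(1-p) = \pi/\sin(\pi p)$, together with $|1 - e^{-2\pi i p}| = 2|\sin(\pi p)|$, shows that $|(1 - e^{-2\pi i p})\,\Gamma(1-p)| = 2\pi/\Gamma(p)$ for real $p > 0$ (the case of positive integer $p$ being handled by continuity).

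The triangle inequality then gives $|\Lambda_p(w)| \le |\Lambda_p(we^{-2\pi i})| + (2\pi/\Gamma(p))\,|w|^p e^{\Re w}$. Since $\cos(\arg w) < 0$ in the sector in question, setting $x = |w||\cos(\arg w)| > 0$ transforms the jump factor into $x^p e^{-x}/|\cos(\arg w)|^p$; the elementary maximum $\max_{x > 0} x^p e^{-x} = (p/e)^p$, combined with the standard Stirling lower bound $\Gamma(p) \ge \sqrt{2\pi/p}\,(p/e)^p$, immediately gives
\[
\frac{2\pi}{\Gamma(p)}\,|w|^p e^{\Re w} \le \frac{\sqrt{2\pi p}}{|\cos(\arg w)|^p},
\]
which is the first inequality in \eqref{eq930}. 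The second inequality $\sqrt{2\pi p} \le 2\chi(p)$ reduces, via the definition \eqref{eq611}, to $\sqrt{p/2} \le \Gamma(p/2+1)/\Gamma(p/2+1/2)$, a direct consequence of Wendel's inequality for ratios of gamma functions. The companion sector $-3\pi/2 < \arg w < -\pi$ is handled by $\Lambda_p(\bar w) = \overline{\Lambda_p(w)}$.

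For \eqref{eq931} I would apply the same monodromy step to only one half of the defining average $\Pi_p(w) = \tfrac{1}{2}(\Lambda_p(we^{\pi i/2}) + \Lambda_p(we^{-\pi i/2}))$. If $\pi/2 < \arg w < \pi$ then $\arg(we^{\pi i/2}) \in (\pi,3\pi/2)$ is exactly in the regime of the monodromy identity, whereas $\arg(we^{-\pi i/2}) \in (0,\pi/2)$ remains on the principal sheet. Since also $\Pi_p(we^{-\pi i}) = \tfrac{1}{2}(\Lambda_p(we^{-\pi i/2}) + \Lambda_p(we^{-3\pi i/2}))$, rearranging gives
\[
\Pi_p(w) - \Pi_p(we^{-\pi i}) = \tfrac{1}{2}(1 - e^{-2\pi i p})\,e^{i\pi p/2}\,w^p e^{iw}\,\Gamma(1-p).
\]
The factor $|e^{iw}| = e^{-\Im w}$ with $\Im w = |w|\sin(\arg w) > 0$, so the same one-variable maximization (with $y = |w|\sin(\arg w)$) yields $|\Pi_p(w) - \Pi_p(we^{-\pi i})| \le \sqrt{2\pi p}/(2|\sin(\arg w)|^p)$, and the $\chi(p)$-form then follows from the same Wendel inequality as above. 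The remaining sector $-\pi < \arg w < -\pi/2$ is again obtained by conjugation.

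The main obstacle is essentially bookkeeping rather than analysis: correctly identifying which sheet the continuation lands on after the $\pm 2\pi i$ rotation of $\arg w$, making sure that $w^p$ on the right-hand side of the monodromy identity is taken on the second sheet, and treating positive integer values of $p$ (where $1 - e^{-2\pi i p}$ vanishes while $\Gamma(1-p)$ blows up) by passage to the limit. Once those points are secured, the rest is a single-variable maximization together with Stirling's and Wendel's inequalities.
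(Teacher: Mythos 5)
Your argument is correct and is essentially the paper's own proof: your monodromy identity is precisely the connection formula $\Lambda _p \left( w \right) = \pm 2\pi i e^{ \mp \pi i p} w^{p} e^{w}/\Gamma \left( p \right) + \Lambda _p ( we^{ \mp 2\pi i} )$ (DLMF 8.2.E10), which the paper simply cites rather than derives, your jump coefficient $\left( 1 - e^{ - 2\pi i p} \right)\Gamma \left( 1 - p \right)$ being equal to $2\pi i e^{ - \pi i p} /\Gamma \left( p \right)$ by the reflection formula, and the subsequent steps (maximizing $r^{p} e^{ - r\alpha }$ at $r = p/\alpha$, the Stirling lower bound $\sqrt {2\pi } \, p^{p - \frac{1}{2}} e^{ - p} \le \Gamma \left( p \right)$, Wendel's inequality for the $\chi \left( p \right)$ form, and conjugation for the opposite sector) are identical. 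One cosmetic slip: continuing $w \mapsto we^{ - 2\pi i}$ multiplies $w^{1 - p}$, and hence $\gamma \left( 1 - p,w \right)$, by $e^{ - 2\pi i\left( 1 - p \right)} = e^{2\pi i p}$ rather than $e^{ - 2\pi i p}$; your final identity is nonetheless correct because the single-valued combination $w^{p} e^{w} \gamma \left( 1 - p,w \right)$ cancels in the difference.
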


The dependence on $\left|w\right|$ in these estimates may be eliminated by employing the bounds for $\left| {\Lambda _p (we^{ \mp 2\pi i} )} \right|$ and $\left| {\Pi _p (we^{ \mp \pi i} )} \right|$ that were derived previously.

\begin{proof} The proof of \eqref{eq930} is based on the functional relation
\begin{equation}\label{eq555}
\Lambda _p \left( w \right) =  \pm 2\pi i\frac{{e^{ \mp \pi ip} }}{{\Gamma \left( {p } \right)}}w^{p} e^w  + \Lambda _p (we^{ \mp 2\pi i} )
\end{equation}
(cf. \cite[8.2.E10]{NIST}). We take the upper or lower sign in \eqref{eq555} according as $\pi < \arg w < \frac{3\pi}{2}$ or $-\frac{3\pi}{2} < \arg w < -\pi$. Now, from \eqref{eq555} we can infer that
\[
\left| {\Lambda _p \left( w \right)} \right| \le 2\pi \frac{1}{{\Gamma \left( {p } \right)}}\left| w \right|^{p } e^{ - \left| w \right|\left| {\cos \left( {\arg w} \right)} \right|}  + \left| {\Lambda _p (we^{ \mp 2\pi i} )} \right|.
\]
Notice that the quantity $r^{p} e^{ - r\alpha }$, as a function of $r>0$, takes its maximum value at $r=p/\alpha$ when $\alpha>0$ and $p>0$. We therefore find that
\[
\left| {\Lambda _p \left( w \right)} \right| \le 2\pi \frac{1}{{\Gamma \left( {p } \right)}}\frac{{p^p e^{ - p} }}{{\left| {\cos \left( {\arg w} \right)} \right|^{p } }} + \left| {\Lambda _p (we^{ \mp 2\pi i} )} \right| \le \frac{{\sqrt {2\pi p} }}{{\left| {\cos \left( {\arg w} \right)} \right|^{p } }} + \left| {\Lambda _p (we^{ \mp 2\pi i} )} \right|.
\]
The second inequality can be obtained from the well known fact that $\sqrt {2\pi } p^{p - \frac{1}{2}} e^{ - p}  \le \Gamma \left( p \right)$ for any $p>0$ (see, for instance, \cite[5.6.E1]{NIST}). We derive the second bound in \eqref{eq930} from the result that
\[
\left(\frac{p}{2}\right)^{\frac{1}{2}}  \le \frac{{\Gamma \left( {\frac{p}{2} + 1} \right)}}{{\Gamma \left( {\frac{p}{2} + \frac{1}{2}} \right)}}
\]
for any $p>0$ (see, e.g., \cite[5.6.E4]{NIST}) and the definition of $\chi\left(p\right)$.

The estimate \eqref{eq931} can be deduced in an analogous manner, starting from the functional relation
\[
\Pi _p (w) = \pm \pi i \frac{{e^{ \mp \frac{\pi }{2}ip} }}{{\Gamma \left( {p } \right)}}w^{p } e^{ \pm iw}  + \Pi _p (we^{ \mp \pi i} ),
\]
which can be obtained from \eqref{eq555} and the definition of the basic terminant $\Pi _p \left( w \right)$.
\end{proof}


\begin{thebibliography}{10}

\bibitem{Airey}
J. R. Airey, ``The converging factor" in asymptotic series and the calculation of Bessel, Laguerre and other functions, \emph{Phil. Mag.} \textbf{24} (1937), no. 7, pp. 521--552.

\bibitem{Berry}
M. V. Berry, C. J. Howls, Hyperasymptotics for integrals with saddles, \emph{Proc. Roy. Soc. London Ser. A} \textbf{434} (1991), no. 1892, pp. 657--675.

\bibitem{Bickley}
W. G. Bickley, Formul{\ae} relating to Bessel functions of moderate or large argument and order,
\emph{Philos. Mag.} \textbf{34} (1943), no. 228, pp. 37--49.

\bibitem{Boyd}
W. G. C. Boyd, Stieltjes transforms and the Stokes phenomenon, \emph{Proc. Roy. Soc. London Ser. A} \textbf{429} (1990), no. 1876, pp. 227--246.

\bibitem{Boyd2}
W. G. C. Boyd, Error bounds for the method of steepest descents, \emph{Proc. Roy. Soc. London Ser. A} \textbf{440} (1993), no. 1910, pp. 493--518.

\bibitem{Burnett}
D. Burnett, The remainders in the asymptotic expansions of certain Bessel functions, \emph{Math. Proc. Cambridge Philos. Soc.} \textbf{26} (1930), no. 2, pp. 145--151. 

\bibitem{Dempsey}
E. Dempsey, G. C. Benson, Note on the asymptotic expansion of the modified Bessel function of the second kind, \emph{Math. Comp.} \textbf{14} (1960), no. 72, pp. 362--365.

\bibitem{Dingle1}
R. B. Dingle, Asymptotic expansions and converging factors. I. General theory and basic converging factors,
\emph{Proc. Roy. Soc. London Ser. A} \textbf{244} (1958), no. 1239, pp. 456--475.

\bibitem{Dingle3}
R. B. Dingle, Asymptotic expansions and converging factors IV. Confluent hypergeometric, parabolic cylinder, modified Bessel, and ordinary Bessel functions, \emph{Proc. Roy. Soc. London Ser. A} \textbf{249} (1959), no. 1257, pp. 270--283.

\bibitem{Dingle2}
R. B. Dingle, \emph{Asymptotic Expansions: Their Derivation and Interpretation}, Academic Press, New York and London, 1973.

\bibitem{Doring}
B. D\"{o}ring, \"{U}ber Fehlerschranken zu den Hankelschen asymptotischen Entwicklungen der Besselfunktionen f\"{u}r komplexes Argument und reellen Index, \emph{Z. Angew. Math. Mech.} \textbf{42} (1962), no. 1--2, pp. 62--76.

\bibitem{Goldstein}
M. Goldstein, R. M. Thaler, Bessel functions for large arguments, \emph{Math. Tables and Other Aids to Comput.} \textbf{12} (1958), no. 61, pp. 18--26.

\bibitem{Hamilton}
W. R. Hamilton, On fluctuating functions, \emph{Trans. R. Irish Acad.} \textbf{19} (1843), pp. 264--321.

\bibitem{Hankel}
H. Hankel, Die Cylinderfunctionen erster und zweiter Art, \emph{Math. Ann.} \textbf{1} (1869), no. 3, pp. 467--501.

\bibitem{Hansen}
P. A. Hansen, \emph{Schriften der Sternwarte Seeberg: Ermittlung der Absoluten St\"orungen in Ellipsen von beliebiger Excentricit\"at und Neigung}, Gotha bei Carl Gl\"aser, 1843.

\bibitem{Heitman}
Z. Heitman, J. Bremer, V. Rokhlin and B. Vioreanu, On the asymptotics of Bessel functions in the Fresnel regime,
\emph{Appl. Comput. Harmon. Anal.} \textbf{39} (2015), no. 2, pp. 347--356.

\bibitem{Howls}
C. J. Howls, Hyperasymptotics for integrals with finite endpoints, \emph{Proc. Roy. Soc. London Ser. A} \textbf{439} (1992), no. 1906, pp. 373--396.

\bibitem{Jacobi}
C. G. J. Jacobi, Versuch einer Berechnung der grossen Ungleichheit des Saturns nach einer strengen Entwickelung,
\emph{Astr. Nach.} \textbf{28} (1849), no. 6, pp. 81--94.

\bibitem{Kirchhoff}
G. Kirchhoff, Ueber den inducirten Magnetismus eines unbegrenzten Cylinders von weichem Eisen, 
\emph{J. Reine Angew. Math.} \textbf{48} (1854), pp. 348--376.

\bibitem{Koshliakov}
N. S. Koshliakov, Note on the reminders in the asymptotic expansions of Bessel functions,
\emph{J. London Math. Soc.} \textbf{s1-4} (1929), no. 4, pp. 297--299.

\bibitem{Kummer}
E. E. Kummer, De integralibus quibusdam definitis et seriebus infinitis,
\emph{J. Reine Angew. Math.} \textbf{17} (1837), pp. 228--242.

\bibitem{Lipschitz}
R. Lipschitz, Ueber ein Integral der Differentialgleichung, \emph{J. Reine Angew. Math.} \textbf{56} (1859), pp. 189--196.

\bibitem{Meijer}
C. S. Meijer, Asymptotische {E}ntwicklungen von {B}esselschen, {H}ankelschen und verwandten {F}unktionen {I--IV}, \emph{Proc. Kon. Akad. Wet. Amsterdam} \textbf{35} (1932), pp. 656--667, 852--866, 948--958, 1079--1090.

\bibitem{NIST}
NIST Digital Library of Mathematical Functions. http://dlmf.nist.gov/, Release 1.0.10 of 2015-08-07.

\bibitem{Oberhettinger}
F. Oberhettinger, L. Badii, \emph{Tables of Laplace Transforms}, Springer-Verlag, Berlin and New York, 1973.

\bibitem{Hyper1}
A. B. Olde Daalhuis, F. W. J. Olver, Hyperasymptotic solutions of second-order linear differential equations I, \emph{Methods Appl. Anal.} \textbf{2} (1995), no. 2, pp. 173--197.

\bibitem{Hyper2}
A. B. Olde Daalhuis, F. W. J. Olver, Hyperasymptotic solutions of second-order linear differential equations II, \emph{Methods Appl. Anal.} \textbf{2} (1995), no. 2, pp. 198--211.

\bibitem{Olver1}
F. W. J. Olver, Error bounds for asymptotic expansions, with an application to cylinder functions of large argument, \emph{Asymptotic Solutions of Differential Equations and their Applications} (C. H. Wilcox, ed.), Wiley, New York, 1964, pp. 163--183.

\bibitem{Olver4}
F. W. J. Olver, Asymptotic approximations and error bounds, \emph{SIAM Rev.} \textbf{22} (1980), no. 2, pp. 188--203.

\bibitem{Olver}
F. W. J. Olver, On Stokes' phenomenon and converging factors, \emph{Asymptotic and Computational Analysis} (R. Wong, ed.), Lecture Notes in Pure and Appl.
Math., no. 124, Marcel Dekker, New York, 1990, pp. 329--355.

\bibitem{Olver2}
F. W. J. Olver, Uniform, exponentially improved, asymptotic expansions for the generalized exponential integral, \emph{SIAM J. Math. Anal.} \textbf{22} (1991), no. 5, pp. 1460--1474.

\bibitem{Olver3}
F. W. J. Olver, Uniform, exponentially improved, asymptotic expansions for the confluent hypergeometric function and other integral transforms, \emph{SIAM J. Math.
Anal.} \textbf{22} (1991), no. 5, pp. 1475--1489.

\bibitem{Olver5}
F. W. J. Olver, \emph{Asymptotics and Special Functions}, AKP Class., A. K. Peters Ltd., Wellesley, MA., 1997, reprint of the original (1974) edition.

\bibitem{Paris}
R. B. Paris, Exponentially small expansions of the confluent hypergeometric functions, \emph{Appl. Math. Sci.} \textbf{7} (2013), no. 133, pp. 6601--6609.

\bibitem{Poisson}
S.-D. Poisson, Suite du {m}\'{e}moire sur la distribution de la chaleur dans les corps solides,
\emph{Journal de l'\'{E}cole Polytechnique} \textbf{12} (1823), no. 19, pp. 249--403.

\bibitem{Schlafli}
L. Schl\"{a}fli, Sull'uso delle linee lungo le quali il valore assoluto di una funzione \`{e} constante, \emph{Annali Mat.} \textbf{2} (1875), no. 6, pp. 1--20. 

\bibitem{Stieltjes}
T. J. Stieltjes, Recherches sur quelques s\'{e}ries semi-convergentes, \emph{Ann. Sci. \'{E}cole Norm. Sup.} \textbf{3} (1886), no. 3, pp. 201--258.

\bibitem{Tao}
T. Tao, \emph{An Introduction to Measure Theory}, Grad. Studies in Math., vol. 126, American Mathematical Society, Providence RI, 2011.

\bibitem{Temme}
N. M. Temme, \emph{Asymptotic Methods for Integrals}, Ser. Analysis, vol. 6, World Scientific, Singapore, 2015.

\bibitem{Watson}
G. N. Watson, \emph{A Treatise on the Theory of Bessel Functions}, Cambridge Math. Lib., Cambridge University Press, Cambridge, 1995. Reprint of the second (1944) edition.

\bibitem{Weber}
H. Weber, Zur Theorie der Bessel'schen Functionen, \emph{Math. Ann.} \textbf{37} (1890), no. 3, pp. 404--416.

\end{thebibliography}
\end{document}